\author{Reuben Wheeler}
\address{University of Edinburgh, JCMB, The King's Buildings, Peter Guthrie Tait Road, Edinburgh, EH9 3FD, Scotland}
\email{reuben.wheeler@ed.ac.uk}
\title{Variation Bounds for Spherical Averages Over Restricted Dilates}
\theoremstyle{plain}
   \newtheorem{theorem}[subsubsection]{Theorem}
   \newtheorem*{theorem*}{Theorem}
   \newtheorem{conjecture}[subsubsection]{Conjecture}
   \newtheorem{remark}[subsubsection]{Remark}
   \newtheorem{proposition}[subsubsection]{Proposition}
   \newtheorem*{proposition*}{Proposition}
   \newtheorem{lemma}[subsubsection]{Lemma}
\newcommand{\RR}{\mathbb{R}}
\newcommand{\NN}{\mathbb{N}}
\DeclareMathOperator{\supp}{supp}
\DeclareMathOperator{\conv}{conv}
\DeclareMathOperator{\diam}{diam}
\begin{document}
\begin{abstract}We study $L^p\rightarrow L^q(V^r_E)$ variation semi-norm estimates for the spherical averaging operator, where $E\subset [1,2]$. 
\end{abstract}
\maketitle
We consider the spherical averaging operator
\[\mathcal{A}f(x,t)=\sigma_t*f(x)=\int f(x-ty) d\sigma(y),\]
where $\sigma_t$ denotes the $t$-dilate of the surface measure, $\sigma$, for the sphere. Much effort has been put into studying for which $p,q$ do we have the a priori estimates
\[\|\mathcal{A}f(x,t)\|_{L^q_x(\RR^d;L^\infty_t(\RR^+))}\lesssim \|f\|_{L^p(\RR^d)}?\]
These constitute bounds for the spherical maximal function. The case $p=q$ has been established in important works by Stein in dimension $d\geq 3$ \cite{stein76Spherical} and Bourgain in the case of $d=2$ \cite{bourgain86}: $L^p\rightarrow L^p$ boundedness holds for the maximal operator for $d/(d-1)<p$. Nevertheless, Calderon established that if the spherical maximal function is restricted to a lacunary set of dilates, i.e. we replace $L^\infty_t(\RR^+)$ by $\ell^\infty_t(\mathcal{T})$ for some lacunary $\mathcal{T}\subset \RR^+$, $L^p\rightarrow L^p$ bounds hold for all $p>1$ \cite{calderon79}. The intermediate case was settled by Seeger--Wainger--Wright \cite{seegerWaingerWright95}, revealing the role of Minkowski dimension in the set of dilations. More recently, this work has been extended to the $L^p\rightarrow L^q$ improving case by Anderson--Hughes--Roos--Seeger \cite{andersonHughesRoosSeeger21}, revealing the additional significance of the more locally sensitive notion of Assouad dimension \cite{fraser21}. 

Closely related to maximal function estimates are local variation semi-norm estimates, given as
\[\|V^r_{[1,2]}\left(\mathcal{A}f(x,\cdot)\right)\|_{L^q_x(\RR^d)}\lesssim \|f\|_{L^p(\RR^d)},\]
where $V^r_{[1,2]}$ denotes the local $r$-variation,
$V^r_{[1,2]}F=\sup_{1\leq t_0<t_1<\ldots<t_N\leq 2}\left(\sum_{j=1}^{N-1}\left|F(t_{j+1})-F(t_{j})\right|^r\right)^\frac{1}{r}$, for $1\leq r$ (with the natural modification for $r=\infty$). For any choice of $t_0$, it is apparent that $\sup_{t}|\mathcal{A}f(x,t)|\leq |\mathcal{A}f(x,t_0)| + V^r \left(\mathcal{A}f(x,\cdot)\right)$. As such, variation seminorm estimates can be regarded as stronger than maximal function estimates. These have long been an important area of study in the field \cite{bourgain89} \cite{jonesKaufmanRosenblattWierdl98} \cite{jonesSeegerWright08}. In particular, variation bounds can be used to establish pointwise convergence of averages without the need to establish pointwise convergence on a suitable dense subclass. 

Jones--Seeger--Wright made progress in establishing global (where there is no restriction in the dilations $t\in \RR^+$) variation seminorm estimates for the spherical averaging operator, in the $L^p\rightarrow L^p$ case \cite{jonesSeegerWright08}. Recent work of Beltran--Oberlin--Roncal--Seeger--Stovall \cite{beltranOberlinRoncalSeegerStovall22} established sharp variation semi-norm bounds for the spherical averaging operator, extending this to the $L^p\rightarrow L^q$ case (for the local $t\in [1,2]$ case) and establishing a new endpoint estimate in the $L^p\rightarrow L^p$ case for the global variation. Investigation of the variation operator for spherical averages over restricted sets of dilations is thus a natural question. As in the case of the maximal function, we expect an improved range of $L^p\rightarrow L^q(V_E^r)$ estimates when the dimension of the dilation set $E\subset[1,2]$ is less than $1$. 

Rough estimates for the spherical averaging operator have been considered by Ham--Ko--Lee \cite{hamKoLee22}. Not only does their analysis build on \cite{beltranOberlinRoncalSeegerStovall22}, but they introduce a form of local smoothing for rough $L^p$-averages. This local smoothing for rough averages appears naturally in a more specific form in our analysis.

\subsection{Named exponents}
As a point of reference, we list some of the named exponents required for the statement of our results.
\begin{multline}\\
\frac{1}{q_{\gamma}}=\frac{d-1}{2(d-1+2\gamma)}\\
\frac{1}{q_{\mathrm{LS},\beta}}=\frac{d-1}{2(d-1+\beta)}.\\
\end{multline}

\begin{multline}\\
Q_1=(0,0,0)\\
Q_{2,\beta}=\left(\frac{d-1}{\beta+d-1},\frac{d-1}{\beta+d-1},\frac{d-1}{\beta+d-1}\right)\\
Q_{3,\beta}=\left(\frac{d-\beta}{d+1-\beta},\frac{1}{d+1-\beta},\frac{d-\beta}{d+1-\beta}\right)\\
Q_{4,\gamma}=\left(\frac{d(d-1)}{d^2-1+2\gamma},\frac{d-1}{d^2-1+2\gamma},\frac{d(d-1)}{d^2-1+2\gamma}\right)\\
\end{multline}

The following are basic exponents arising from the above as a simple consequence of $\ell^r$ embedding. 
\begin{multline}\\
\widetilde{Q_{2,\beta}}=\left(\frac{d-1}{\beta+d-1},\frac{d-1}{\beta+d-1},0\right)\\
\widetilde{Q_{3,\beta}}=\left(\frac{d-\beta}{d+1-\beta},\frac{1}{d+1-\beta},0\right)\\
\widetilde{Q_{4,\gamma}}=\left(\frac{d(d-1)}{d^2-1+2\gamma},\frac{d-1}{d^2-1+2\gamma},0\right).\\
\end{multline}

We also name the following
\begin{multline}\\
Q_{5,\beta,\beta}=\left(\frac{d-1}{2(d-1+\beta)},\frac{d-1}{2(d-1+\beta)},\frac{d-1}{2\beta}\right)\\
Q_{6,\beta}=\left(\frac{1}{2},\frac{1}{2},\frac{d-1}{2\beta}\right).\\
\end{multline}

The following exponents are perhaps not as intrinsic to the problem in its general formulation. Indeed, if one is able to obtain appropriate estimates for exponents $r<1$ (see \cite{berghPeetre74}), we no longer expect these to be vertices of associated type sets. Nevertheless, for our purposes in this present work, these are appropriate named exponents. 

\begin{multline}\\
Q_{A,\beta,\beta}=\left(\frac{\beta}{(d-1+\beta)},\frac{\beta}{(d-1+\beta)},1\right)\\ 
Q_{B,\beta}=\left(\frac{d-1-\beta}{d-1},\frac{d-1-\beta}{d-1},1\right)\\
Q_{C,\beta}=\left(\frac{d-2\beta}{d+1-2\beta},\frac{1}{d+1-2\beta},1\right)\\
Q_{D,\beta,\gamma}=\left(\frac{(d-1)(d-\beta)-2\beta\gamma}{d^2-\beta d -1 + \beta+2\gamma-2\beta\gamma},\frac{d-1}{d^2-\beta d -1 + \beta +2\gamma -2\beta \gamma},1\right).
\end{multline}

The exponent $Q_{A,\beta,\beta}$ is related to local smoothing. We don't have the conjectured estimate (see below), except in the case $\beta = 1$ and $d=2$. The following is a basic exponent we use as a substitute, arising from the others by a simple consequence of localisation properties. 
\begin{equation}
Q_{D,\beta,\gamma}^1=\left(\frac{d-1}{d^2-\beta d -1 + \beta+2\gamma-2\beta\gamma},\frac{d-1}{d^2-\beta d -1 + \beta +2\gamma -2\beta \gamma},1\right).
\end{equation}

\subsection{Statement of results}
For a bounded set $E\subset \RR$, let $N(E,\delta)$ denote the $\delta$-covering number of $E$, namely the minimal number of elements in a covering of $E$ by $\delta$-intervals. For a set $E\subset [1,2]$ we denote by $\dim_M E$ the (upper) Minkowski dimension of $E$, namely
\[\inf\left\lbrace s>0 ;\,N(E,\delta)\lesssim \delta^{-s},\, 0<\delta<1\right \rbrace.\]

For a set $E\subset [1,2]$ we denote by $\dim_A E$ the Assouad dimension of $E$, namely
\[\inf\left\lbrace s>0 ;\,\forall I\subset [1,2],\,N(E\cap I,\delta)\leq C_s \left(\frac{\delta}{|I|}\right)^{-s},\, 0<\delta<1\right \rbrace,\]
where $I$ is an interval. 

For a set $E\subset [1,2]$ we denote by $\dim_{A,\theta} E$ the Assouad spectrum of $E$, namely
\[\inf\left\lbrace s>0 ;\, 0<\delta<1,\,\forall I_\theta \subset [1,2],\, N(E\cap I_{\theta},\delta)\leq C_s \left(\delta^{1-\theta}\right)^{-s}\right \rbrace,\]
where $I_\theta$ is an interval of width $\delta^\theta$.

Typically, we use $\beta$ to denote the Minkowski dimension, $\gamma$ to denote Assouad dimension, and $\gamma_\theta$ to denote Assouad spectrum.

\begin{theorem} \label{thm:main}Let $E\subset [1,2]$ be such that $\dim_A E=\gamma$ and $\dim_M E = \beta$. With $d\geq 4$, or $d=3$ and $\gamma \leq -1+\sqrt{3}$, or $d=2$ and $\gamma \leq (-3+\sqrt{17})/4$, suppose that 
 \[\left(\frac{1}{p},\frac{1}{q},\frac{1}{r}\right)\in\left(\conv \left\lbrace Q_1,  \widetilde{Q_{2,\beta}}, \widetilde{Q_{3,\beta}}, \widetilde{Q_{4,\gamma}},Q_{2,\beta}, Q_{3,\beta}, Q_{4,\beta}, Q_{D,\beta,\gamma}, Q_{B,\beta}, Q_{C,\beta},Q_{D,\beta,\gamma}^1\right \rbrace\right)^\circ.\]
Then 
\[\left\|V^r_E\mathcal{A}f\right\|_{L^q}\lesssim \|f\|_{L^p}.\]
\end{theorem}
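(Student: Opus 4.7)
The plan is to reduce to single-scale (frequency-localized) estimates via a spatial Littlewood--Paley decomposition, control the variation over $E$ at each scale using the covering structure of $E$, and then interpolate among the named vertices of the type set.

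Decompose $f = P_{\leq 0} f + \sum_{k \geq 1} P_k f$. The low-frequency piece is easily bounded, so the task is to show
\[\|V^r_E \mathcal{A} P_k f\|_{L^q} \lesssim 2^{-k \varepsilon} \|f\|_{L^p}\]
for some $\varepsilon > 0$ depending on $(1/p, 1/q, 1/r)$, so that the scales sum geometrically. The key geometric fact is that $\mathcal{A}_t P_k f$ is essentially constant on $t$-intervals of length $2^{-k}$, since the $t$-derivative $\partial_t \mathcal{A}_t P_k f$ behaves like $2^k \mathcal{A}_t P_k f$ in the natural $L^p$ senses. Consequently, if $E_k \subset E$ is a maximal $2^{-k}$-separated subset (so $|E_k| \lesssim N(E, 2^{-k})$), the full variation $V^r_E(\mathcal{A}_t P_k f)$ is controlled up to negligible errors by $V^r_{E_k}(\mathcal{A}_t P_k f)$, and the trivial bound $V^r_{E_k}(g) \leq (2|E_k|)^{1/r} \|g\|_\infty$ provides a gain of the covering number to the power $1/r$. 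When a refined treatment inside each $2^{-k}$-interval is needed, one further uses the Assouad bound $N(E \cap I, \delta) \lesssim (\delta / |I|)^{-\gamma}$ for $|I| \geq 2^{-k}$, and at intermediate scales the Assouad spectrum.

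By convexity of the type set, it suffices to verify the claimed bound at each named vertex. The vertices with $1/r = 0$, namely $Q_1, \widetilde{Q_{2,\beta}}, \widetilde{Q_{3,\beta}}, \widetilde{Q_{4,\gamma}}$, reduce to the restricted spherical maximal function bounds of Anderson--Hughes--Roos--Seeger, where the distinction between the Minkowski-dimensional contribution ($\widetilde{Q_{2,\beta}}, \widetilde{Q_{3,\beta}}$) and the Assouad-dimensional contribution ($\widetilde{Q_{4,\gamma}}$) is already visible. The intermediate vertices $Q_{2,\beta}, Q_{3,\beta}, Q_{4,\gamma}$ with $1/r = 1/p$ emerge by combining the corresponding single-scale $L^p \to L^q$ maximal bound with the $N(E, 2^{-k})^{1/r}$ factor, with the exponents calibrated so that the geometric series in $k$ converges. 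The vertices with $1/r = 1$, i.e., $Q_{B,\beta}, Q_{C,\beta}, Q_{D,\beta,\gamma}, Q_{D,\beta,\gamma}^1$, correspond to $V^1_E$-bounds; these are obtained by summing the full Minkowski covering factor against sharp single-scale $L^p$-improving estimates for $\mathcal{A} P_k$ of Beltran--Oberlin--Roncal--Seeger--Stovall, with the Assouad dimension entering through a further subdivision of $E$. Real interpolation across variation exponents then fills in the interior.

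The main obstacle is producing the sharpest single-scale $L^p \to L^q$ decay at each frequency $k$, with enough gain to absorb both the covering-number factor and the desired power of $2^k$. The natural vertex $Q_{A,\beta,\beta}$ would arise from a full local smoothing estimate for rough spherical averages of the kind introduced by Ham--Ko--Lee, currently out of reach outside $d = 2, \beta = 1$ (via Guth--Wang--Zhang). We therefore substitute the weaker $Q^1_{D,\beta,\gamma}$ obtained by combining localization with a basic one-scale $L^p \to L^q$ bound. The dimensional restrictions ($d \geq 4$; $d = 3$ with $\gamma \leq -1 + \sqrt{3}$; $d = 2$ with $\gamma \leq (-3 + \sqrt{17})/4$) ensure that the relevant Fourier-analytic single-scale estimates combine with the Assouad-dimensional local covering bounds in a favourable range, keeping the geometric series over $k$ summable and the resulting convex hull non-degenerate.
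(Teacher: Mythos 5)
Your overall architecture does match the paper's: a Littlewood--Paley decomposition $\mathcal{A}_j=\mathcal{A}\circ P_j$, a discretisation of $V^r_E$ at scale $2^{-j}$ over a minimal cover of $E$, and interpolation among the named exponents. But two steps as written would fail. First, the reduction ``by convexity of the type set, it suffices to verify the claimed bound at each named vertex'' is inconsistent with your own summability requirement: at the named vertices the single-scale constants are exactly $O(1)$ --- that is what makes them vertices, they are the critical thresholds at which the exponential gain in $j$ degenerates --- so the geometric series over $j$ diverges there and the vertex bounds are not obtained (this is why the theorem is stated for the \emph{interior} of the convex hull). The correct argument produces, for each point of the interior, points arbitrarily close to the critical faces at which the single-scale bound decays like $2^{-j\epsilon}$, and then interpolates; one must also keep the full mixed norm $\|\mathcal{A}_jf(x,t_\nu)\|_{L^q_x(\ell^r_\nu(\mathcal{Z}_j(E)))}$ together with the derivative term $\sup_h\|2^{-j}\partial_t\mathcal{A}_jf(x,t_\nu+h)\|_{L^q_x(\ell^r_\nu)}$, rather than collapsing the discrete variation to $(2|E_k|)^{1/r}\|g\|_\infty$, which is too lossy for what follows.

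Second, and more seriously, the Assouad-dimension vertices $\widetilde{Q_{4,\gamma}}$, $Q_{4,\gamma}$, $Q_{D,\beta,\gamma}$, $Q_{D,\beta,\gamma}^1$ cannot be reached by ``single-scale $L^p\to L^q$ maximal bound times $N(E,2^{-k})^{1/r}$,'' nor by fixed-time $L^p$-improving estimates summed against the Minkowski covering factor. The essential analytic input is the mixed-norm square-function estimate $\|\mathcal{A}_jf(\cdot,t_\nu+h)\|_{L^{q_\gamma}_x(\ell^2_\nu(\mathcal{Z}_j(E)))}\lessapprox 2^{-j\frac{(d-1)^2-2\gamma}{2(d-1+2\gamma)}}\|f\|_{L^2}$, proved by a $TT^*$ argument in which the Assouad hypothesis is used to count, for each $\nu$, the number of $\nu'$ with $|t_\nu-t_{\nu'}|\sim 2^{n-j}$; the gain comes from the interaction between different dilates, not from a per-$t$ bound multiplied by a cardinality. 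Your proposal never produces an $\ell^2_\nu$-valued estimate of this kind, and without it the face through $Q_{4,\gamma}$ and $Q_{D,\beta,\gamma}$ is out of reach; in particular the thresholds $\gamma\leq-1+\sqrt{3}$ ($d=3$) and $\gamma\leq(-3+\sqrt{17})/4$ ($d=2$), which arise precisely from the summability of the $\ell^1_\nu$-H\"older version of that estimate, cannot be derived from the ingredients you list. (Invoking Beltran--Oberlin--Roncal--Seeger--Stovall or Guth--Wang--Zhang is not needed anywhere in the unconditional theorem; the local smoothing input only enters the conditional Theorems \ref{thm:LSimplies} and \ref{thm:d2LSimplies}.)
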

\begin{figure}
    \centering
    \includegraphics[width=0.75\linewidth]{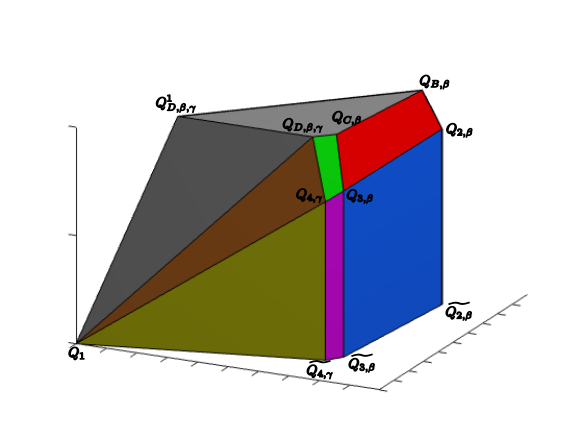}
    \caption{Theorem \ref{thm:main} for $d=4$, $\beta=\gamma=0.7$.}
    \label{fig:thmmain}
\end{figure}

\begin{remark}
Sharpness of Theorem \ref{thm:main} is investigated in Section \ref{sec:sharp}, but we here note some features.

For any $E$ and $d\geq 4$ (or for $\gamma$ sufficiently small in the case $d<4$), in the $(1/p,1/q,1/r)$-region lying vertically above $(1/p,1/q)\in \conv\lbrace (0,0),(1,0),(1/2,1/q_{\gamma})\rbrace$, this results is sharp (up to endpoints). This corresponds to the orange face in Figure \ref{fig:thmmain}. In the case $\beta = \gamma$, in the $(1/p,1/q,1/r)$-region lying vertically above $(1/p,1/q)\in \conv\lbrace (1/2,1/2),(1,0),(1/2,1/q_{\gamma})\rbrace$, the result is sharp (up to endpoints). This corresponds to the green face in Figure \ref{fig:thmmain}. In the $(1/p,1/q,1/r)$-region lying vertically above $(1/p,1/q)\in \conv\lbrace (1,0),(1,1),(1/2,1/2)\rbrace$, sharpness remains undetermined (although we attempt to describe what a sharp example might look like). This region corresponds to the red face in Figure \ref{fig:thmmain}.

Necessary conditions for boundedness of the averaging operator which are independent of $r$ correspond with the maximal function, which is more well understood. We refer to \cite{andersonHughesRoosSeeger21} and, in particular, to \cite{roosSeeger23} for a compelling investigation of these matters. The corresponding faces are the yellow, pink, and blue faces in Figure \ref{fig:thmmain}.
\end{remark}

\begin{remark}
As in \cite{roosSeeger23}, it is to be expected there are interesting geometries of the type set (i.e. the $(1/p,1/q,1/r)$ region corresponding to the range of boundedness for the spherical averaging operator) that more specific $E$ can realise. Understanding the geometry of the type set for a given $E$ and dimension $d$ is an interesting problem. The related question for the maximal function has been investigated by Roos and Seeger \cite{roosSeeger23}, where suitable $E$ can be constructed to realise many possible convex geometries of the type set. Importantly, these authors also extend the result of \cite{andersonHughesRoosSeeger21} to the case $d=2$ and $\gamma> 1/2$. 
\end{remark}



We now state a version of local smoothing for rough time averages, we will explore this conjecture further in future work. 
\begin{conjecture}\label{conj:roughLS}
Suppose that $E\subset [1,2]$ is such that $\dim_M E = \dim_A E = \beta$. Let $\nu\in\mathcal{Z}_j(E)$ index a minimal set of $\delta$-separated $t_\nu$ such that the intervals $[t_\nu,t_\nu+2^{-j}]$ cover $E$. Then, for $0\leq h \leq 2^{-j}$, 
\[\left\|\mathcal{A}_{j}f(x,t_\nu+h)\right\|_{L^{q_{\mathrm{LS},\beta}}_x(\RR^d;\ell^{q_{\mathrm{LS},\beta}}_\nu(\mathcal{Z}_j(E)))}\lessapprox 2^{-\frac{j(d-1)}{2}}|\mathcal{Z}_j(E)|^{1/ q_{\mathrm{LS},\beta}}\|f\|_{L^{q_{\mathrm{LS},\beta}}}.\]
\end{conjecture}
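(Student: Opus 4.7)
The plan is to interpolate between the trivial $L^2$ endpoint (corresponding formally to $\beta=0$) and a refined local smoothing estimate at the critical wave exponent $p_1=2d/(d-1)$ (corresponding formally to $\beta=1$), obtaining the conjecture at the intermediate Sobolev exponent $q_{\mathrm{LS},\beta}$.

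For the $L^2$ endpoint, Plancherel yields $\|\mathcal{A}_j f(\cdot,t)\|_{L^2_x}\lesssim 2^{-j(d-1)/2}\|f\|_{L^2}$ uniformly in $t$; raising to the square and summing over $\nu$ gives
\[\|\mathcal{A}_j f(\cdot,t_\nu+h)\|_{L^2_x\ell^2_\nu}\lesssim |\mathcal{Z}_j(E)|^{1/2}\cdot 2^{-j(d-1)/2}\|f\|_{L^2},\]
which is the claim at $p=2$. To pass between $\ell^p_\nu$ and $L^p_t$, decompose $\widehat{\sigma}(t\xi)=\sum_{\pm}a_\pm(t|\xi|)e^{\pm it|\xi|}$, so that each piece of $t\mapsto\mathcal{A}_j f(x,t)$ has $t$-frequency support of size $\sim 2^j$. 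A Plancherel--P\'olya/Bernstein inequality for $2^{-j}$-separated points then yields
\[\sum_\nu |\mathcal{A}_j f(x,t_\nu+h)|^p\lessapprox 2^j\int_{E(c2^{-j})}|\mathcal{A}_j f(x,t)|^p\,dt,\]
where $E(c2^{-j})$ is the $c2^{-j}$-thickening of $E$, of measure $\approx|\mathcal{Z}_j(E)|\cdot 2^{-j}$. To obtain the $p_1$-endpoint one then needs the refined local smoothing estimate
\[\int_{E(2^{-j})}\|\mathcal{A}_j f(\cdot,t)\|_{L^{p_1}_x}^{p_1}\,dt\lessapprox|E(2^{-j})|\cdot 2^{-jp_1(d-1)/2}\|f\|_{L^{p_1}}^{p_1},\]
which, together with the Bernstein transfer above, yields $\|\mathcal{A}_j f(\cdot,t_\nu+h)\|_{L^{p_1}_x\ell^{p_1}_\nu}\lessapprox|\mathcal{Z}_j(E)|^{1/p_1}\cdot 2^{-j(d-1)/2}\|f\|_{L^{p_1}}$.

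Complex interpolation of the vector-valued operator $\mathcal{A}_j:L^p(\RR^d)\to L^p_x(\ell^p_\nu)$ between these two endpoints at parameter $\theta=d\beta/(d-1+\beta)$ produces the exponent
\[\frac{1}{p_\theta}=\frac{1-\theta}{2}+\frac{\theta(d-1)}{2d}=\frac{d-1}{2(d-1+\beta)}=\frac{1}{q_{\mathrm{LS},\beta}},\]
and constant $|\mathcal{Z}_j(E)|^{1/p_\theta}\cdot 2^{-j(d-1)/2}$, as claimed. The $\ell^p(L^p)=L^p(\ell^p)$ identification and standard bilinear complex interpolation make this vector-valued step routine.

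The main obstacle is Step 3, the refined local smoothing estimate. For $\beta=1$ it is precisely the classical Sogge local smoothing conjecture, established in $d=2$ by Guth--Wang--Zhang and open in $d\geq 3$. For $\beta<1$, one expects this to follow from cone $\ell^2$-decoupling (Bourgain--Demeter) combined with a wave-packet/plate decomposition of $\mathcal{A}_j f$ at scale $2^{-j/2}$ and an incidence count against $E$ exploiting the uniform hypothesis $\dim_A E=\dim_M E=\beta$; this uniform Assouad control is what makes local density of $E$ at every scale and in every subinterval available, and should be what one uses to bound the overlap of plates restricted to $E(2^{-j})$. Formulating and proving this fractal-restricted local smoothing appears to be the substantive new ingredient required, consistent with the author's remark that the conjecture will be explored in future work.
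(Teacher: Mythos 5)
The statement you are trying to prove is stated in the paper as a \emph{conjecture}: the paper does not prove it, and offers only a conditional reduction (Proposition \ref{prop:roughLSImpliesVarLS} in Appendix \ref{sec:LS}, deriving Conjecture \ref{conj:roughLS} from Conjecture \ref{conj:roughLSMeasure} for Ahlfors--David regular sets). Your proposal likewise is not a proof: it is a conditional reduction whose essential ingredient — your ``refined local smoothing estimate'' at $p_1=2d/(d-1)$ restricted to $E(2^{-j})$ — is left unproved, and you acknowledge as much. That is a genuine gap, not a technicality. Concretely: (i) for $\beta=1$ your Step 3 is exactly the Sogge local smoothing conjecture at the critical exponent, known only for $d=2$ (Guth--Wang--Zhang) and open for $d\geq 3$; (ii) for $\beta<1$ your estimate with the factor $|E(2^{-j})|\approx 2^{-j(1-\beta)}$ on the right is \emph{strictly stronger} than restricting the classical $[1,2]$ estimate to $E(2^{-j})$ — it asserts equidistribution of the $L^{p_1}_{x,t}$ mass over $t$ at scale $2^{-j}$ — and is not known in any dimension; (iii) the route you suggest for it, cone $\ell^2$-decoupling plus a plate/incidence count, cannot reach $p_1=2d/(d-1)$, since decoupling yields the full local smoothing gain only in the supercritical range $p\geq 2(d+1)/(d-1)$.

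The parts of your argument that are sound are the two transfer mechanisms: the $L^2$ endpoint via Plancherel (this is the paper's estimate \eqref{eq:core222}), and the Plancherel--P\'olya/Bernstein passage between the discrete sum over $2^{-j}$-separated $t_\nu$ and an integral over the $2^{-j}$-neighbourhood of $E$, using that $\mathcal{A}_jf(x,\cdot)$ is time-frequency localised at scale $2^j$. This is essentially the same mechanism the paper uses in Appendix \ref{sec:LS} (Lemmas \ref{lem:timeFrequencyErrors} and \ref{lem:localisedTimeFrequencyProjection}, and the comparison $\mu_j(I_\nu)\sim\mu(I_\nu)$ for AD-regular $E$). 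Your interpolation numerology is also correct: $\theta=d\beta/(d-1+\beta)$ between $2$ and $p_1$ does land on $q_{\mathrm{LS},\beta}$ with the constant $2^{-j(d-1)/2}|\mathcal{Z}_j(E)|^{1/q_{\mathrm{LS},\beta}}$. So what you have produced is an alternative conditional reduction, comparable in status to the paper's Proposition \ref{prop:roughLSImpliesVarLS} (which conditions on a fractal-measure local smoothing estimate stated directly at $q_{\mathrm{LS},\beta}$ rather than on a $p_1$-endpoint plus interpolation). Neither resolves the conjecture; the substantive open problem remains exactly where you located it.
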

In fact, the conjecture is perhaps more generally stated with respect to a rough measure (see Appendix \ref{sec:LS}, where we show Conjecture \ref{conj:roughLSMeasure} implies Conjecture \ref{conj:roughLS} for Ahlfors--David regular sets).
\begin{conjecture}\label{conj:roughLSMeasure}
Suppose that $\mu$ is an Ahlfors--David regular probability measure supported on $[1,2]$ such that $\mu(B(t,r))\sim r^\beta$ for $r<\diam\left(\supp\mu\right)$. Then
\[\left\|\left(e^{\pm it\sqrt{-\Delta}}\phi_j\left(\sqrt{-\Delta}\right)f\right)(x)\right\|_{L^{q_{\mathrm{LS},\beta}}_x(\RR^d;L^{q_{\mathrm{LS},\beta}}_t(d\mu))}\lessapprox \|f\|_{L^{q_{\mathrm{LS},\beta}}}.\]
\end{conjecture}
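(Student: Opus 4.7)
The plan is to attack Conjecture \ref{conj:roughLSMeasure} via a fractal refinement of Bourgain--Demeter cone decoupling, combined with the natural $L^2$ endpoint. The exponent $q_{\mathrm{LS},\beta}$ reflects a precise arithmetic balance: Miyachi--Peral at fixed time gives $\|e^{\pm it\sqrt{-\Delta}}\phi_j(\sqrt{-\Delta})f\|_{L^q}\lesssim 2^{j(d-1)(1/2-1/q)}\|f\|_{L^q}$, while averaging a function essentially constant on $2^{-j}$-intervals against $\mu$ should heuristically yield a gain of $2^{-j\beta/q}$. The condition $\tfrac{1}{q}=\tfrac{d-1}{2(d-1+\beta)}$ is exactly what makes these two factors cancel.

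First I would frequency localise to $\{|\xi|\sim 2^j\}$ and set $u(x,t)=e^{\pm it\sqrt{-\Delta}}\phi_j(\sqrt{-\Delta})f(x)$. Using essential constancy of $u(x,\cdot)$ on intervals of length $2^{-j}$ together with the Ahlfors--David bound $\mu(I_\nu)\sim 2^{-j\beta}$ on a minimal $2^{-j}$-cover $\{I_\nu\}_{\nu\in\mathcal{Z}}$ of $\supp\mu$ of cardinality $\sim 2^{j\beta}$, the conjecture reduces to the discretised inequality
\[\sum_{\nu\in\mathcal{Z}}\|u(\cdot,t_\nu)\|^{q_{\mathrm{LS},\beta}}_{L^{q_{\mathrm{LS},\beta}}(\RR^d)}\lessapprox 2^{j\beta}\|f\|^{q_{\mathrm{LS},\beta}}_{L^{q_{\mathrm{LS},\beta}}(\RR^d)}.\]
I would then decompose $f$ into C\'ordoba wave packets $\{\psi_P\}$ adapted to tubes of length $1$ and radius $2^{-j/2}$ along a $2^{-j/2}$-net of $S^{d-1}$; under the half-wave propagator each $\psi_P$ is transported at unit speed along its tube and contributes to $u(x,\cdot)$ at fixed $x$ on only a time slab of width $\sim 2^{-j}$. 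The discretised sum thus becomes a count of wave-packet traversals of the sampled times $\{t_\nu\}$, which one wishes to control by an $\ell^2$-decoupling for the light cone sensitive to the fractal distribution of $\{t_\nu\}$.

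The core obstacle is this last step. A naive transfer of classical Bourgain--Demeter decoupling from $dt$ to $d\mu$ via the sampling above yields a polynomial loss of order $2^{j(1-\beta)/p_\ast}$ at the cone exponent $p_\ast=\tfrac{2(d+1)}{d-1}$; after Riesz--Thorin interpolation against the trivial endpoint $\|u\|_{L^2_xL^2_t(d\mu)}=\|f\|_{L^2}$, the interpolated loss at $q_{\mathrm{LS},\beta}$ is of order $2^{j\beta(1-\beta)(d-1)/(4(d-1+\beta))}$, which is strictly positive for $0<\beta<1$ and not absorbed by $\lessapprox$. Closing the gap requires a genuine fractal decoupling, quantifying how many $2^{-j/2}$-plates on the cone can simultaneously concentrate at a $\mu$-typical time; a plausible route is a Frostman-type analysis of $\mu$ combined with multi-scale induction in the style of refined Strichartz estimates. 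In the case $\beta=1$, $d=2$ the conjecture specialises to the Guth--Wang--Zhang theorem, while for $\beta<1$ or $d\geq 3$ it strictly strengthens classical local smoothing and is the central problem to pursue in future work.
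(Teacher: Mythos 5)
This statement is a \emph{conjecture} in the paper: no proof of Conjecture \ref{conj:roughLSMeasure} appears there. The only rigorous content the paper attaches to it is Proposition \ref{prop:roughLSImpliesVarLS} in Appendix \ref{sec:LS}, which shows that the measure formulation implies the discretised formulation (Conjecture \ref{conj:roughLS}) for Ahlfors--David regular sets, using the time-frequency localisation lemmas and the comparison $\mu_j(I_\nu)\sim\mu(I_\nu)$. Your submission is likewise not a proof but a research programme, and you say so yourself. Your preparatory steps are sound and consistent with the paper's own framework: the exponent arithmetic $(d-1)(1/2-1/q)=\beta/q \iff q=q_{\mathrm{LS},\beta}$ is correct, and your reduction to $\sum_\nu\|u(\cdot,t_\nu)\|_{q}^{q}\lessapprox 2^{j\beta}\|f\|_q^q$ via essential constancy on $2^{-j}$-intervals and $\mu(I_\nu)\sim 2^{-j\beta}$ is exactly the passage between Conjectures \ref{conj:roughLSMeasure} and \ref{conj:roughLS} (the paper carries out the reverse direction carefully with the $\widetilde{\Lambda}_l$ and $\eta_l$ constructions; your direction would need the same care, but that is routine).

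The genuine gap is the step you flag yourself: the ``$\ell^2$-decoupling for the light cone sensitive to the fractal distribution of $\{t_\nu\}$'' is not an auxiliary lemma, it \emph{is} the conjecture. Everything before it is a standard reduction, and everything after it is interpolation with the trivial $L^2$ bound, which you correctly compute cannot absorb the $2^{j(1-\beta)/p_*}$ loss incurred by naively sampling Bourgain--Demeter at $2^{j\beta}$ of the $2^{j}$ available times. No amount of rearranging the reduction will fix this: already at $\beta=1$ and $d\geq 3$ the target $\|u\|_{L^q(dx\,dt)}\lessapprox\|f\|_{L^q}$ at $q=2d/(d-1)$ is the open endpoint local smoothing estimate $\sigma=1/p=\bar s_p$ below the decoupling exponent $2(d+1)/(d-1)$, so any correct argument must contain input beyond decoupling plus interpolation (your appeal to Guth--Wang--Zhang covers only $d=2$, $\beta=1$). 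For $\beta<1$ one additionally needs the Frostman/multi-scale incidence estimate you gesture at, which is not supplied. In short: the proposal correctly identifies the shape of the problem and the critical numerology, but proves nothing that the paper does not already assume, and should be presented as a reduction plus an open decoupling question rather than as a proof.
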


\begin{remark}We might also consider the conjecture for less regular measures. For example $\mu$ supported on $[1,2]$ such that $\mu(B(t,r))\lesssim r^\beta$ with Assouad dimension $\gamma$. Then, according with a Knapp example, we may expect the estimate
\begin{equation}\label{conj:roughLSMeasureAss}\left\|\left(e^{\pm it\sqrt{-\Delta}}\phi_j\left(\sqrt{-\Delta}\right)f\right)(x)\right\|_{L^{q_{\mathrm{LS},\gamma}}_x(\RR^d;L^{q_{\mathrm{LS},\gamma}}_t(d\mu))}\lessapprox \|f\|_{L^{q_{\mathrm{LS},\gamma}}}.\end{equation}
It is likely the full range local smoothing estimates we can obtain will be sensitive to various dimensional notions regarding $\mu$ and we don't expect interpolating \eqref{conj:roughLSMeasureAss} with the trivial $L^2$ estimate would give sharp bounds in general.
\end{remark}

\begin{theorem}\label{thm:LSimplies}Let $E\subset [1,2]$ be such that $\dim_A E=\gamma=\dim_M E = \beta$. If $d=2$ suppose, additionally, that $\gamma\leq \frac{1}{2}$. Suppose that Conjecture \ref{conj:roughLS} holds, and
 \[\left(\frac{1}{p},\frac{1}{q},\frac{1}{r}\right)\in\left(\conv \left\lbrace Q_1,  \widetilde{Q_{2,\beta}}, \widetilde{Q_{3,\beta}}, \widetilde{Q_{4,\gamma}},Q_{2,\beta}, Q_{3,\beta}, Q_{4,\beta}, Q_{D,\beta,\gamma}, Q_{B,\beta}, Q_{C,\beta},Q_{A,\beta,\gamma}\right \rbrace\right)^\circ,\]
then
\[\left\|V^r_E\mathcal{A}\left(f(x,\cdot)\right)\right\|_{L^q_x}\lesssim \|f\|_{L^p}.\]
\end{theorem}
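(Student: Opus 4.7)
The proof follows the template of Theorem \ref{thm:main}: the convex hulls in the two statements differ only in that $Q_{A,\beta,\beta}$ (applicable since $\gamma=\beta$) replaces the substitute $Q^1_{D,\beta,\gamma}$. All the other vertices having already been supplied by Theorem \ref{thm:main}, it suffices to establish, assuming Conjecture \ref{conj:roughLS}, the single endpoint estimate at $Q_{A,\beta,\beta}$,
\[\|V^1_E \mathcal{A}f\|_{L^{(d-1+\beta)/\beta}} \lessapprox \|f\|_{L^{(d-1+\beta)/\beta}},\]
and then appeal to convex interpolation with the remaining (already-established) vertices.

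The plan is: (i) Littlewood--Paley decompose $\mathcal{A} = \sum_{j\geq 0} \mathcal{A}_j$, where $\mathcal{A}_j f(\cdot, t)$ has spatial frequency $\sim 2^j$ and is essentially smooth at $t$-scale $2^{-j}$; (ii) using this $t$-smoothness, reduce the variation to sampling on the $2^{-j}$-net,
\[V^1_E(\mathcal{A}_j f)(x) \lesssim \sum_{\nu\in \mathcal{Z}_j(E)} \sup_{0\leq h\leq 2^{-j}} |\mathcal{A}_j f(x, t_\nu+h)|;\]
(iii) apply Conjecture \ref{conj:roughLS} at the rough local smoothing exponent $q_{\mathrm{LS},\beta}=2(d-1+\beta)/(d-1)$ and follow with H\"older in $\nu$ (passing from $\ell^{q_{\mathrm{LS},\beta}}_\nu$ to $\ell^1_\nu$ at cost $|\mathcal{Z}_j(E)|^{1-1/q_{\mathrm{LS},\beta}}\sim 2^{j\beta(1-1/q_{\mathrm{LS},\beta})}$) to deduce
\[\|V^1_E \mathcal{A}_j f\|_{L^{q_{\mathrm{LS},\beta}}} \lessapprox 2^{j(\beta - (d-1)/2)}\|f\|_{L^{q_{\mathrm{LS},\beta}}}.\]
The hypothesis $\beta \leq (d-1)/2$ (automatic for $d\geq 3$ since $\beta\leq 1$, and imposed for $d=2$ by $\gamma\leq 1/2$) makes this geometrically summable in $j$, yielding the bound at $(1/q_{\mathrm{LS},\beta}, 1/q_{\mathrm{LS},\beta}, 1)$. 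Interpolating this against the trivial bound $\|V^1_E \mathcal{A}_j f\|_{L^\infty} \lesssim 2^{j\beta}\|f\|_{L^\infty}$ at the critical convex parameter $\theta^\ast = 2\beta/(d-1)\in [0,1]$ exactly balances the $j$-exponent in the loss: a short computation gives $1/p=\theta^\ast/q_{\mathrm{LS},\beta}=\beta/(d-1+\beta)$, which is precisely $Q_{A,\beta,\beta}$.

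The main obstacle is step (ii): the supremum $\sup_{0\leq h\leq 2^{-j}}$ over a continuum of $h$ values must be handled carefully for $f$ merely in $L^p$, and is treated using the frequency localisation via a Sobolev-in-$h$ embedding (or equivalently a $2^{-j}$-dense sampling net, absorbing a logarithm into $\lessapprox$), as in \cite{beltranOberlinRoncalSeegerStovall22, jonesSeegerWright08}. The sublinearity of $V^1_E$ in the interpolation step is handled by the standard linearization of the variation semi-norm. Once step (ii) is in hand, step (iii) and the interpolation are routine, and Conjecture \ref{conj:roughLS} slots in cleanly in place of the localisation argument producing $Q^1_{D,\beta,\gamma}$ in Theorem \ref{thm:main}.
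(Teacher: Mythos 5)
Your treatment of the new vertex $Q_{A,\beta,\beta}$ is essentially the paper's: H\"older in $\nu$ applied to Conjecture \ref{conj:roughLS} gives the $\ell^1_\nu$ bound $\lessapprox 2^{j(\beta-(d-1)/2)}$ on $L^{q_{\mathrm{LS},\beta}}$, and interpolation with the $(0,0,0)$ estimate at $\theta^*=2\beta/(d-1)$ recovers $1/p=\beta/(d-1+\beta)$. Two small caveats there: at $\theta^*$ the $j$-exponent is only critical, so you obtain summable estimates at points arbitrarily close to, but not at, $Q_{A,\beta,\beta}$ --- sufficient for the open hull, but your displayed ``endpoint estimate'' for the full operator $\mathcal{A}$ is not actually what comes out; and your step (ii) should be routed through the embedding theorem of Section \ref{sec:embedding}, which controls the variation inside each $2^{-j}$-interval by the derivative term $2^{-j}\partial_t\mathcal{A}_jf$ rather than by $\sup_h|\mathcal{A}_jf(x,t_\nu+h)|$ (harmless, by Remark \ref{rmk:partialSameAsA}, but your stated pointwise bound is not literally correct).

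The genuine gap is the opening reduction: ``all the other vertices having already been supplied by Theorem \ref{thm:main}'' is false precisely in the regime where Theorem \ref{thm:LSimplies} says something new. Theorem \ref{thm:main} requires $d\geq 4$, or $d=3$ with $\gamma\leq -1+\sqrt{3}$, or $d=2$ with $\gamma\leq(-3+\sqrt{17})/4$, whereas Theorem \ref{thm:LSimplies} imposes only $\gamma\leq 1/2$ when $d=2$ and nothing when $d=3$. For, say, $d=3$ and $\beta=\gamma=0.9$, the estimates near $Q_{D,\beta,\gamma}$ in the proof of Theorem \ref{thm:main} --- obtained by interpolating \eqref{eq:core1inf1} with \eqref{eq:core2q2} and then applying Lemma \ref{lem:holderTime} to reach $r=1$ --- fail to be summable; that failure is exactly where the restriction $\gamma\leq-1+\sqrt{3}$ came from. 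The paper therefore re-derives the estimates near $Q_{D,\beta,\gamma}=Q_{D,\beta,\beta}$ by interpolating the $(1,0,1)$ estimate with the conjectured local smoothing estimate at $(1/q_{\mathrm{LS},\beta},1/q_{\mathrm{LS},\beta},1/q_{\mathrm{LS},\beta})$ followed by H\"older in $\nu$; the critical threshold of that family is again $Q_{D,\beta,\beta}$, and it is summable under the weaker hypotheses. Your argument needs this second use of Conjecture \ref{conj:roughLS}; as written it only establishes the theorem in the (strictly smaller) parameter range of Theorem \ref{thm:main}.
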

\begin{figure}
    \centering
    \includegraphics[width=0.75\linewidth]{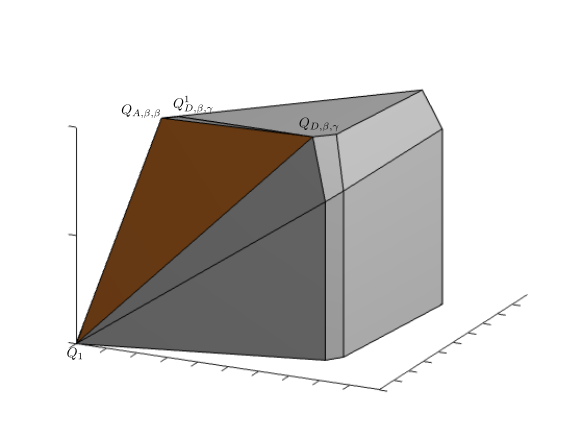}
    \caption{Theorem \ref{thm:LSimplies} relative to Theorem \ref{thm:main} for $d=4$, $\beta=\gamma=0.7$.}
    \label{fig:thmLS}
\end{figure}

\begin{remark}
Unlike \cite{beltranOberlinRoncalSeegerStovall22}, our arguments require the local smoothing estimate in \emph{all} dimensions (not just $d=2$ and $d=3$) to obtain sharp estimates. In their case, $\beta=\gamma = 1$ and $Q_{D,1,1}^1=Q_{A,1,1}$. However, when $\beta<1$, there is a more significant loss arising from the localisation properties that transfer estimates from $Q_{D,\beta,\gamma}$ to $Q_{D,\beta,\gamma}^1$ (see Section \ref{sec:multiShell}: the vertex $Q_{D,\beta,\gamma}^1$ does not pass the test with equality for $\beta<1$, but $Q_{A,\beta,\beta}$ does). 
\end{remark}

\begin{remark}
It is of interest what $Q_{A,\beta,\gamma}$ should be when $\beta<\gamma$, however this requires further investigation. 
\end{remark}

\begin{theorem}\label{thm:d2LSimplies}Let $E\subset [1,2]$ be such that $\dim_A E=\gamma=\dim_M E = \beta$. Suppose also that $d=2$ and $\gamma > \frac{1}{2}$. Suppose that Conjecture \ref{conj:roughLS} holds, and
 \[\left(\frac{1}{p},\frac{1}{q},\frac{1}{r}\right)\in\left(\conv \left\lbrace Q_1,  \widetilde{Q_{2,\beta}}, \widetilde{Q_{3,\beta}}, \widetilde{Q_{4,\gamma}},Q_{2,\beta}, Q_{3,\beta}, Q_{4,\beta}, Q_{5,\beta,\beta}, Q_{6,\beta}\right \rbrace\right)^\circ,\]
then
\[\left\|V^r_E\mathcal{A}\left(f(x,\cdot)\right)\right\|_{L^q_x}\lesssim \|f\|_{L^p}.\]
\end{theorem}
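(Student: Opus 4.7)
The plan is to adapt the Littlewood--Paley and long/short variation framework of the proof of Theorem \ref{thm:LSimplies}. In the regime $d=2$, $\gamma > 1/2$ the $L^2$-based sparse maximal bounds that underlie the off-diagonal vertices $Q_{B,\beta}$, $Q_{C,\beta}$, $Q_{D,\beta,\gamma}$, $Q_{A,\beta,\gamma}$ of Theorem \ref{thm:LSimplies} are unavailable, so I substitute the two diagonal vertices $Q_{5,\beta,\beta}$ and $Q_{6,\beta}$ in their place. The remaining vertices $Q_1, \widetilde{Q_{2,\beta}}, \widetilde{Q_{3,\beta}}, \widetilde{Q_{4,\gamma}}, Q_{2,\beta}, Q_{3,\beta}, Q_{4,\beta}$ are inherited from Theorem \ref{thm:LSimplies} and handled identically, using only bounds that are valid for all $d$ and all $\gamma$.

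After the dyadic frequency decomposition $\mathcal{A} = \mathcal{A}_0 + \sum_{j\geq 1}\mathcal{A}_j$ (with $\mathcal{A}_0$ trivial), for each $j$ I introduce the minimal $2^{-j}$-separated net $\mathcal{Z}_j(E)$, so $|\mathcal{Z}_j(E)|\lesssim 2^{j\beta}$, and split $V^r_E \mathcal{A}_j = V^r_{E,\mathrm{long}}\mathcal{A}_j + V^r_{E,\mathrm{short}}\mathcal{A}_j$, where $V^r_{E,\mathrm{long}}$ is the variation across the net and $V^r_{E,\mathrm{short}}$ the variation inside each $[t_\nu, t_\nu + 2^{-j}]\cap E$. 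Because $\mathcal{A}_j f(x,\cdot)$ is essentially $t$-frequency localised to scale $2^j$, Bernstein in $t$ bounds the short variation on $[t_\nu, t_\nu + 2^{-j}]$ pointwise by $\sup_{h\in[0,2^{-j}]}|\mathcal{A}_j f(x, t_\nu+h)|$.

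For the short variation at $Q_{5,\beta,\beta}$, I apply Conjecture \ref{conj:roughLS} directly: it gives $\|\mathcal{A}_j f(x,t_\nu+h)\|_{L^{q_{\mathrm{LS},\beta}}_x(\ell^{q_{\mathrm{LS},\beta}}_\nu)}\lessapprox 2^{-j(d-1)/2}|\mathcal{Z}_j(E)|^{1/q_{\mathrm{LS},\beta}}\|f\|_{L^{q_{\mathrm{LS},\beta}}}$, and converting the $\ell^{q_{\mathrm{LS},\beta}}_\nu$ norm to $\ell^r_\nu$ at cost $|\mathcal{Z}_j(E)|^{1/r - 1/q_{\mathrm{LS},\beta}}$ produces a total $j$-power of $2^{j(\beta/r - (d-1)/2)}$, which is uniform in $j$ precisely when $r = 2\beta/(d-1)$, matching $Q_{5,\beta,\beta}$. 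For the long variation at $Q_{6,\beta}$, since $r = 2\beta/(d-1)\leq 2$ H\"older in $\nu$ reduces $V^r$ to $V^2$ at cost $|\mathcal{Z}_j(E)|^{1/r - 1/2}$; the $V^2$ bound is handled by Plancherel, using that at $|\xi|\sim 2^j$ one has $|\widehat{\sigma_{t_{\nu+1}}}(\xi) - \widehat{\sigma_{t_\nu}}(\xi)|\lesssim 2^{-j}|\xi|\cdot|\xi|^{-(d-1)/2}$, so each $L^2_x$-increment is $\lesssim 2^{-j(d-1)/2}\|\phi_j(\sqrt{-\Delta})f\|_{L^2}$ and the $\ell^2_\nu$-sum contributes $|\mathcal{Z}_j(E)|^{1/2}\lesssim 2^{j\beta/2}$. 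The total $j$-power is again $2^{j(\beta/r - (d-1)/2)} = 2^0$, uniform in $j$, so Littlewood--Paley orthogonality closes the $L^2\to L^2$ estimate; in $d=2$ this is genuinely borderline, with the saving $2^{-j/2}$ per difference just balancing the cardinality and H\"older costs.

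The hardest aspect is not any single vertex estimate --- each is essentially a direct consequence of Conjecture \ref{conj:roughLS} or a clean Plancherel computation --- but the fact that $Q_{5,\beta,\beta}$ and $Q_{6,\beta}$ only furnish estimates on the $p = q$ diagonal. Consequently the main content lies in verifying that the convex hull of the listed vertices genuinely exhausts the claimed range given that no off-diagonal substitute for $Q_{A,\beta,\gamma}$ is available. The restriction $\gamma > 1/2$ in $d=2$ is precisely the threshold at which the hull of diagonal plus trivial vertices begins to cover the relevant region, which is the geometric reason the theorem takes this reduced form.
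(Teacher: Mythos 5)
Your treatment of the two new diagonal vertices is essentially the paper's: $Q_{5,\beta,\beta}$ comes from Conjecture \ref{conj:roughLS} followed by H\"older in $\nu$ (Lemma \ref{lem:holderTime}), $Q_{6,\beta}$ from the $L^2_x(\ell^2_\nu)$ estimate \eqref{eq:core222} followed by the same H\"older step, and in both cases the critical exponent $1/r=(d-1)/(2\beta)$ is exactly where the power of $2^{j}$ becomes non-summable. Your long/short split is also consistent with the paper's embedding theorem (the short variation is the $2^{-j}\partial_t\mathcal{A}_j$ term, which satisfies the same bounds by Remark \ref{rmk:partialSameAsA}).

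However, there is a genuine gap in your claim that the vertices $\widetilde{Q_{4,\gamma}}$ and $Q_{4,\beta}$ are ``inherited from Theorem \ref{thm:LSimplies} and handled identically, using only bounds that are valid for all $d$ and all $\gamma$.'' In Theorems \ref{thm:main} and \ref{thm:LSimplies}, all estimates near $Q_{4,\gamma}$ come from interpolating the $(1,0,1)$ kernel bound \eqref{eq:core1inf1} with the $(1/2,1/q_\gamma,1/2)$ estimate \eqref{eq:core2q2}; but \eqref{eq:core2q2} is precisely the estimate that is only asserted for $d\geq 3$ or $\gamma\leq 1/2$ when $d=2$, and in any case its gain $2^{-j\frac{(d-1)^2-2\gamma}{2(d-1+2\gamma)}}$ becomes a \emph{loss} when $d=2$ and $\gamma>1/2$, so the resulting family is not summable at any point of that edge. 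Since $\widetilde{Q_{4,\gamma}}$ is itself obtained from $Q_{4,\gamma}$ by $\ell^r$ embedding, your argument produces no summable estimate anywhere near the $Q_{4}$-vertices, and the claimed convex hull is not covered. The paper's proof addresses exactly this point: it replaces the $(1,0,1)$--$(1/2,1/q_\gamma,1/2)$ interpolation by an interpolation of $(1,0,1)$ with the conjectured local smoothing estimate at $(1/q_{\mathrm{LS},\beta},1/q_{\mathrm{LS},\beta},1/q_{\mathrm{LS},\beta})$, and checks that the critical summability threshold $\theta=\frac{2(d-1+\beta)}{d^2-1+2\beta}$ of that family lands exactly at $Q_{4,\beta}$ when $\beta=\gamma$. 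Without this step (or some substitute input, since \eqref{eq:core2q2} is unavailable here), your proof does not close.

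A smaller inaccuracy: the vertices $Q_{B,\beta}$ and $Q_{C,\beta}$ do not rely on any ``$L^2$-based sparse maximal bound''; they come from \eqref{eq:core111}, \eqref{eq:core222}, \eqref{eq:core1inf1}. They drop out of the $d=2$, $\gamma>1/2$ statement because their defining critical parameters $\theta=\frac{d-1-2\beta}{d-1}$ and $\theta=\frac{d-1-2\beta}{d+1-2\beta}$ become negative, i.e.\ the H\"older-to-$r=1$ versions of those interpolants are never summable, not because the underlying endpoint estimates fail.
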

\begin{figure}
    \centering
    \includegraphics[width=0.7\linewidth]{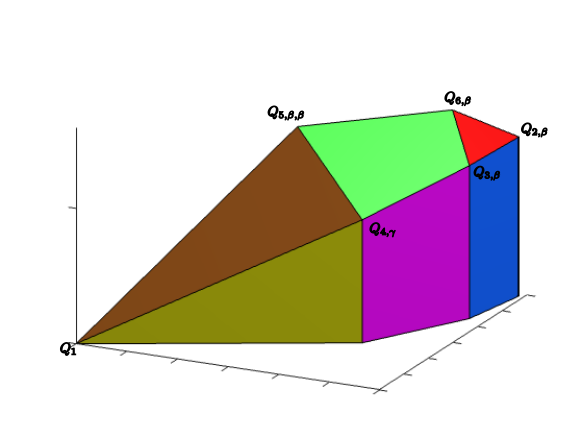}
    \caption{Theorem \ref{thm:d2LSimplies} when $\beta=\gamma=0.7$.}
    \label{fig:d2LS}
\end{figure}

\section*{Acknowledgements}
The author would like to express gratitude to his PhD supervisor Jim Wright, by whom he was first introduced to the spherical maximal function over restricted dilates. The author would like to thank Alex Rutar for inspiring conversations about the work. The author would also like to thank Luz Roncal for advice on the paper. The author is also grateful to Richard Oberlin, for his advice on how to render 3D images using Matlab.

\section{Notation}
For a set $S\subset\RR^3$, we denote by $\conv S$ the convex hull of these points. We denote by $S^\circ$ the interior of $S$.

We denote by $\supp f$ or $\supp \mu$ the support of a function or measure. 

Throughout, $\sigma$ will be used to denote the surface measure of the unit sphere and $\sigma_t$ the $t$-dilate thereof.

We use an indexed mixed norm notation for functions of multiple variables. For example,
\[\left\| g(x,\nu)\right\|_{L^q_x(\ell^r_\nu (\mathcal{Z}_j(E)))} = \left(\int_{\RR^d} \left(\sum_{\nu\in \mathcal{Z}_j(E)}|g(x,\nu)|^r\right)^{\frac{q}{r}}dx\right)^{\frac{1}{q}}.\]

If we write $F_j(x)\lesssim G_j(x)$ or $F_j(x)=O(G_j(x))$ for non-negative functions, this means there exists a constant $C$ (independent of $j$) such that
\[F_j(x)\leq CG_j(x).\]
If we write $F_j(x)\lessapprox G_j(x)$, this means for all $\epsilon>0$, there exists a constant $C_\epsilon$ (independent of $j$) such that
\[F_j(x)\leq C_\epsilon 2^{j\epsilon}G_j(x).\]

\section{Outline}
In Section \ref{sec:embedding}, we prove an embedding theorem bounding $L^p\rightarrow L^q(V^r_E)$ operator estimates by mixed norm $L^p\rightarrow L^q(\ell^r)$ estimates. In Section \ref{sec:coreEsts}, we state the core estimates we use in our interpolation argument before going on to prove them. Section \ref{subsec:oscIntEsts} outlines the core estimates obtained via the oscillatory integral representation of the averaging operator. Section \ref{subsec:kernelEsts} contains proofs of these estimates relying on the kernel representation of our operator. Section \ref{sec:interpolation} is where we prove our main results as a simple consequence of our embedding and the core estimates established proved in Section \ref{sec:coreEsts}: Section \ref{sec:localisation} introduces some localisation lemmas, which we combine with interpolation in Section \ref{subsec:endpointInterpolation} to prove Theorems \ref{thm:main}, \ref{thm:LSimplies}, and \ref{thm:d2LSimplies}. Section \ref{sec:sharp} includes some examples relating to the sharpness of our results.

In Appendix \ref{sec:LS} we explore the problem of local smoothing for rough time-averages, it is included as context for the interested reader to relate Conjectures \ref{conj:roughLS} and \ref{conj:roughLSMeasure}, but the results of this paper do not rely on it. We include a proof of the Besov embedding for $V^r_{[1,2]}$ in Appendix \ref{app:besov}.

\section{An embedding theorem}\label{sec:embedding}
It is natural to make a Littlewood--Paley decomposition of our operator. For suitable bump functions $\sum_{j\geq 0}\phi_j(|\xi|)=1$, with $\phi_j(|\xi|)=\phi_1(2^{-j+1}|\xi|)$ for $j\geq 1$, we denote the frquency projection operator $P_j$ such that
\[\widehat{P_jf}(\xi)=\phi_j(|\xi|)\hat{f}(\xi).\]
We then seek summable operator estimates for the operators $\mathcal{A}_j=\mathcal{A}\circ P_j$.

Motivated by the proof of the Besov embedding (see Appendix \ref{app:besov}), we have the following result. It can be observed that this is simply a strengthening of the maximal function embedding utilised in \cite{andersonHughesRoosSeeger21}.
\begin{theorem}For each scale $\delta=2^{-j}$, index by $\nu\in\mathcal{Z}_j(E)$ the set of left endpoints $t_\nu$ of a minimal $\delta$-cover of $E\subset [1,2]$ such that the left endpoints are $\delta$-separated. Then we have that, for $1\leq r\leq q$,
\[\|V^r_E(\mathcal{A}_jf(x,\cdot))\|_{L^q_x}\lesssim \left\|\mathcal{A}_{j}f(x,t_\nu)\right\|_{L^q_x(\RR^d;\ell^r_\nu(\mathcal{Z}_j(E)))}+\sup_{h\in [0,2^{-j}]}\left\|2^{-j}\frac{\partial}{\partial t}\mathcal{A}_{j}f(x,t_\nu+h)\right\|_{L^q_x(\RR^d;\ell^r_\nu(\mathcal{Z}_j(E)))}.\]
\end{theorem}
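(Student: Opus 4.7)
The plan is to obtain the stated estimate as a pointwise-in-$x$ decomposition of $V^r_E$ combined with Minkowski's integral inequality, very much in the spirit of the Besov embedding proof referred to in Appendix \ref{app:besov}. Fix $x$ and set $F(t) = \mathcal{A}_jf(x,t)$ and $I_\nu = [t_\nu, t_\nu + 2^{-j}]$ for $\nu \in \mathcal{Z}_j(E)$. Given any admissible sequence $1 \le t_0 < t_1 < \ldots < t_N \le 2$ in $E$, each $t_k$ lies in a unique $I_{\nu(k)}$, and $k \mapsto \nu(k)$ is non-decreasing. The first step is to split the raw variation sum $\sum_k |F(t_{k+1})-F(t_k)|^r$ into \emph{intra-interval} increments (those with $\nu(k+1)=\nu(k)$) and \emph{inter-interval} jumps, routing each jump through the cover points $t_{\nu(k)}$ and $t_{\nu(k+1)}$ via the triangle inequality. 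This produces one pure cover-point difference per jump plus two short boundary pieces which are absorbed into the intra-interval bound for $I_{\nu(k)}$ and $I_{\nu(k+1)}$.

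For the cover-point contributions, the crude inequality $|a-b|^r \le 2^{r-1}(|a|^r+|b|^r)$ gives $V^r(\{F(t_\nu)\})^r \le C \|F(x,t_\nu)\|_{\ell^r_\nu}^r$, which yields the first term on the right. For each intra-interval contribution, Hölder's inequality applied inside the definition of $V^r$ yields the universal bound $V^r(F|_I) \le |I|^{1-1/r}\|F'\|_{L^r(I)}$ on any interval $I$, so that, summing over $\nu$ and rewriting the sum of $L^r$ norms as a single integral,
\[\sum_\nu V^r(F|_{I_\nu})^r \le 2^{-j(r-1)} \int_0^{2^{-j}} \left\|\tfrac{\partial}{\partial t}\mathcal{A}_jf(x, t_\nu + h)\right\|_{\ell^r_\nu(\mathcal{Z}_j(E))}^r \, dh. \]

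The final step is to take the $L^q_x$ norm of the $r$-th root of the last display. Because $r \le q$ by hypothesis, Minkowski's integral inequality allows the outer $L^q_x$ to be pulled past the inner $L^r_h$ integration; bounding the resulting $L^r_h$ integral over an interval of length $2^{-j}$ by $2^{-j/r}$ times the supremum over $h$ then produces the factor $2^{-j(1-1/r)} \cdot 2^{-j/r} = 2^{-j}$ and places $\sup_h$ outside the mixed $L^q_x(\ell^r_\nu)$ norm, exactly as in the statement. The main technical point — and the step requiring care — is verifying that the intra/inter decomposition accumulates only a bounded constant, independent of the number of intervals visited; this is secured by the fact that each jump contributes at most three telescoped pieces, and each short boundary piece is absorbed by the $V^r(F|_{I_\nu})$ estimate without being double-counted. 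The invocation of Minkowski's integral inequality is where the hypothesis $r \le q$ is consumed; it is also the only place where the target norm (with $\sup_h$ outside the $L^q_x(\ell^r_\nu)$ norm) is obtained rather than the weaker version with $\sup_h$ inside.
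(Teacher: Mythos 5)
Your proposal is correct and follows essentially the same route as the paper's proof: the same intra/inter-interval decomposition of the variation sum routed through the cover endpoints, the same fundamental theorem of calculus plus H\"older bound $\delta^{1-1/r}\|F'\|_{L^r(I_\nu)}$ for the intra-interval pieces, and the same use of Minkowski's inequality (consuming $r\leq q$) followed by H\"older in $h$ to place $\sup_h$ outside the mixed norm. No substantive differences.
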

\begin{proof}
We first note that that, for some choice of $t_{1} < t_{2} < \ldots < t_{N}$ in $E$,
\[V_E^r\left(\mathcal{A}_{j}f\right)(x)\]
\[\lesssim \left(\sum_{i=2}^N\left|\mathcal{A}_{j}f(x,t_{i})-\mathcal{A}_{j}f(x,t_{i-1})\right|^r\right)^{\frac{1}{r}}.\] 
Now, let $\mathcal{I}_l=\lbrace I_1,I_2,\ldots I_J\rbrace$ be an ordered cover of $E$ by $\delta$-intervals, with $\delta=2^{-l}$. We can assume without loss of generality that the left endpoints of the intervals are $\delta$-separated. Suppose that we have a sequence $t_1<t_2<\ldots<t_N$ of points in $E$. Starting from $n_0=0$, we find $n_1,n_2,\ldots,n_J$ and $N(k)=\sum_{j=1}^{k}n_j$ such that for $N(k-1)\leq i < N(k)$, $t_i\in I_k=[a_k,a_k+\delta]$. We then rewrite the previously displayed expression as
\[
=\left(\sum_{k=1}^{J}\sum_{i=N(k-1)+1}^{N(k)}|\mathcal{A}_{j}f(t_i)-\mathcal{A}_{j}f(t_{i-1})|^r+\sum_{k=2}^{J}|\mathcal{A}_{j}f(t_{N(k)})-\mathcal{A}_{j}f(t_{N(k)-1})|^r\right)^\frac{1}{r}
\]
\begin{equation}\label{eq:opEmbFTCterms}
\lesssim\left(\sum_{k=1}^{J}\sum_{i=N(k-1)}^{N(k)}|\mathcal{A}_{j}f(x,t_i)-\mathcal{A}_{j}f(x,t_{i-1})|^r\right)^\frac{1}{r}
\end{equation}
\[
+\left(\sum_{k=2}^{J-1}|\mathcal{A}_{j}f(x,t_{N(k)})-\mathcal{A}_{j}f(x,a_{k+1})|^r\right)^\frac{1}{r}
\]
\[
+\left(\sum_{k=2}^{J-1}|\mathcal{A}_{j}f(x,a_{k})-\mathcal{A}_{j}f(x,t_{N(k)-1})|^r\right)^\frac{1}{r}
\]
\begin{equation}\label{eq:opEmbPPterms}
+\left(\sum_{k=1}^{J}|\mathcal{A}_{j}f(x,a_{k+1})|^r\right)^\frac{1}{r}.
\end{equation}

It is most instructive to consider the sum in \eqref{eq:opEmbFTCterms}, the other difference terms can be treated analogously. As in the Besov embedding, we use the fundamental theorem of calculus and H\"{o}lder's inequality to find that 
\[\left(\sum_{k=1}^{J}\sum_{i=N(k-1)}^{N(k)}|\mathcal{A}_{j}f(x,t_i)-\mathcal{A}_{j}f(x,t_{i-1})|^r\right)^\frac{1}{r}
\]
\[\lesssim\left(\sum_{k=1}^{J}\sum_{i=N(k-1)}^{N(k)}(t_i-t_{i-1})^{r-1}\int_{t_{i-1}}^{t_i}\left|\frac{\partial}{\partial t}\mathcal{A}_{j}f(x,t)\right|^rdt\right)^\frac{1}{r}
\]
\[\lesssim \delta^{1-\frac{1}{r}}\left(\sum_{k=1}^{J}\int_{I_k}\left|\frac{\partial}{\partial t}\mathcal{A}_{j}f(x,t)\right|^rdt\right)^\frac{1}{r}
\]
\[\lesssim \delta^{1-\frac{1}{r}}\left(\int_{0}^{\delta}\sum_{\nu\in \mathcal{Z}_j}\left|\frac{\partial}{\partial t}\mathcal{A}_{j}f(x,t_\nu + h)\right|^rdh\right)^\frac{1}{r}.
\]
We are interested in the $L^q_x$ norm of the previous expression. By an application of Minkowski's inequality relative to the $L^{q/r}_x(L^1_h)$ expression, followed by H\"older's inequality, we obtain the bound
\[\left\|V^r\left(\mathcal{A}_j f(x,\cdot)\right)\right\|_{L^q_x}\lesssim \left\|\mathcal{A}_{j}f(x,t_\nu)\right\|_{L^q_x(\RR^d;\ell^r_\nu(\mathcal{Z}_j(E)))}+\delta^{-\frac{1}{r}}\left(\int_0^{\delta}\left\|\delta\frac{\partial}{\partial t} \mathcal{A}_jf (x,t_\nu+h)\right\|_{L^q_x(\RR^d;\ell^r_\nu(\mathcal{Z}_j(E)))}^{r}dh\right)^{\frac{1}{r}}\]
\[\lesssim \left\|\mathcal{A}_{j}f(x,t_\nu)\right\|_{L^q_x(\RR^d;\ell^r_\nu(\mathcal{Z}_j(E)))}+\sup_{h\in [0,\delta]}\left\|\delta\frac{\partial}{\partial t} \mathcal{A}_jf (x,t_\nu+h)\right\|_{L^q_x(\RR^d;\ell^r_\nu(\mathcal{Z}_j(E)))}.\]
\end{proof}

\section{Core operator estimates}\label{sec:coreEsts}
We seek summable bounds 
\[\sup_{h\in [0, 2^{-j}]}\left\|\mathcal{A}_{j}f(x,t_\nu+h)\right\|_{L^q_x(\RR^d;\ell^r_\nu(\mathcal{Z}_j(E)))}\lesssim C_{j,p,q}\|f\|_{L^p}\]
and
\[ \sup_{h\in [0,2^{-j}]}\left\|\delta\frac{\partial}{\partial t} \mathcal{A}_jf (x,t_\nu+h)\right\|_{L^q_x(\RR^d;\ell^r_\nu(\mathcal{Z}_j(E)))}\lesssim C_{j,p,q}\|f\|_{L^p}.\]
These are obtained by interpolation, $\ell^r$ embedding, and H\"older's inequality. We here state the endpoint estimates utilised for the interpolation argument. 

Writing the estimates for $\mathcal{A}_j$ rather than $2^{-j} \frac{\partial}{\partial t} \mathcal{A}_j$, which by our analysis (see Remark \ref{rmk:partialSameAsA}), are expressed equivalently. The core operator estimates are as follows. We show, for all $h\in [0,2^{-j}]$,
\begin{equation}\label{eq:core1inf1}\left\|\mathcal{A}_{j}f(x,t_\nu+h)\right\|_{L^\infty_x(\RR^d;\ell^1_\nu(\mathcal{Z}_j(E)))}\lesssim 2^{j}\|f\|_{L^1},\end{equation}
\begin{equation}\label{eq:core111}\left\|\mathcal{A}_{j}f(x,t_\nu+h)\right\|_{L^1_x(\RR^d;\ell^1_\nu(\mathcal{Z}_j(E)))}\lesssim |\mathcal{Z}_j(E)|\|f\|_{L^1},\end{equation}
\begin{equation}\label{eq:coreinfinfinf}\left\|\mathcal{A}_{j}f(x,t_\nu+h)\right\|_{L^\infty_x(\RR^d;\ell^\infty_\nu(\mathcal{Z}_j(E)))}\lesssim \|f\|_{L^\infty},\end{equation}
and 
\begin{equation}\label{eq:core222}\left\|\mathcal{A}_{j}f(x,t_\nu+h)\right\|_{L^2_x(\RR^d;\ell^2_\nu(\mathcal{Z}_j(E)))}\lesssim |\mathcal{Z}_j(E)|^{1/2}2^{-j\frac{(d-1)}{2}}\|f\|_{L^2}.\end{equation}
The following estimate is an improvement to the one found in \cite{andersonHughesRoosSeeger21} and we adapt their argument. For $d\geq 3$ or  $\gamma\leq \frac{1}{2}$ in the case $d=2$,
\begin{equation}\label{eq:core2q2}\left\|\mathcal{A}_{j}f(x,t_\nu+h)\right\|_{L^{q_{\gamma}}_x(\RR^d;\ell^{2}_\nu(\mathcal{Z}_j(E)))}\lessapprox 2^{-j\frac{(d-1)^2-2\gamma}{2(d-1+2\gamma)}}\|f\|_{L^2},\end{equation} 
where $q_{\gamma}=\frac{2(d-1+2\gamma)}{d-1}$. Note this corresponds with the Stein--Tomas restriction exponent when $\gamma=1$.
\begin{remark}
The author has learned that \eqref{eq:core2q2} has been obtained independently by David Beltran, Joris Roos, and Andreas Seeger. In fact, if $N(E\cap I, \delta)\lesssim (\delta/|I|)^{-\gamma}$, they also show (without an $\epsilon$-loss)
\[\left\|\mathcal{A}_{j}f(x,t_\nu+h)\right\|_{L^{q_{\gamma},\infty}_x(\RR^d;\ell^{2}_\nu(\mathcal{Z}_j(E)))}\lesssim 2^{-j\frac{(d-1)^2-2\gamma}{2(d-1+2\gamma)}}\|f\|_{L^{2}}.\]
\end{remark}

With $q_{\mathrm{LS},\beta}=\frac{2(d-1+\beta)}{d-1}$, we express a conjecture of local smoothing for rough time-averages:
\begin{equation}\label{eq:coreqqq}\left\|\mathcal{A}_{j}f(x,t_\nu+h)\right\|_{L^{q_{\mathrm{LS},\beta}}_x(\RR^d;\ell^{q_{\mathrm{LS},\beta}}_\nu(\mathcal{Z}_j(E)))}\lessapprox 2^{-\frac{j(d-1)}{2}}|\mathcal{Z}_j(E)|^{1/ q_{\mathrm{LS},\beta}}\|f\|_{L^{q_{\mathrm{LS},\beta}}}.\end{equation} 
The exponent $q_{\mathrm{LS},\beta}$ is given by $q_{\mathrm{LS},\beta}=\frac{2(d-1+\beta)}{d-1}$. Note that it corresponds with the classical local smoothing exponent when $\beta=1$. This problem appears in \cite{hamKoLee22} in a more general form, where they obtain partial results.

\subsection{Oscillatory integral operator bounds}\label{subsec:oscIntEsts}
Taking the Fourier transform of $\mathcal{A}_jf(\cdot,t)$, we have
\[\widehat{\mathcal{A}_jf(\cdot,t)}(\xi)=\phi_j(|\xi|)\hat{\sigma}(t|\xi|)\hat{f}(\xi).\]
In this way, working by the method of stationary phase to expand $\hat{\sigma}(t|\xi|)$, we can represent, for $t\in [1,2]$ and $j\geq 1$
\[\mathcal{A}_jf(x,t)=\mathcal{A}_j^{+}f(x,t)+\mathcal{A}_j^{-}f(x,t),\] 
where
\[\mathcal{A}_j^{\pm}f(x,t)=\int e^{i \left(x\cdot \xi \pm t |\xi| \right)}\phi_j(|\xi|)a_{\pm}(t,\xi)\hat{f}(\xi)d\xi
,\]
where $a\in S^{-\frac{d-1}{2}}$ is a symbol of order $-(d-1)/2$, i.e. for all multi-indices $\alpha$ and $M\in\NN_0$
\[\left|\partial_t^{M}\partial_\xi^{\alpha}a(t,\xi)\right|\lesssim_{M,\alpha}(1+|\xi|)^{-\frac{(d-1)}{2}-|\alpha|}.\]
See, for instance, Chapter 8 of \cite{stein93}.

In this section, using this representation, we prove the bounds \eqref{eq:core222} and \eqref{eq:core2q2}. For easy reference, we index the estimates by $(1/p,1/q,1/r)$: the estimates are $(1/2,1/2,1/2)$ and $(1/2,1/q_{\gamma},1/2)$. 

\begin{remark}\label{rmk:partialSameAsA} Note that $2^{-j}\frac{\partial}{\partial t} \mathcal{A}_j$ is an Fourier integral operator with a symbol of the same order and support as that of $\mathcal{A}_j$. Although we require bounds for both $2^{-j}\frac{\partial}{\partial t} \mathcal{A}_j$ and $ \mathcal{A}_j$, we present the arguments only for $\mathcal{A}_j$. This is because all estimates we obtain depend on: the support of the symbol, the order of the symbol, and the $\delta$-separation of the points $t_\nu+h$.
\end{remark} 

\begin{proof}[Proof of $(1/2,1/2,1/2)$ estimate \eqref{eq:core222}]
We use an $L^\infty$  Fourier multiplier bound and Plancherel's theorem to see that
\[\left\|\mathcal{A}_{j}f(x,t_\nu)\right\|_{L^2_x(\RR^d;\ell^2_\nu(\mathcal{Z}_j(E)))}\]
\[=\left( \sum_{\nu\in\mathcal{Z}_j(E)}\left\|\widehat{\mathcal{A}_{j}f(\cdot,t_\nu)}\right\|_{L^2}^2 
 \right)^{\frac{1}{2}}\]
\[\lesssim |\mathcal{Z}_j(E)|^{\frac{1}{2}}2^{-\frac{j(d-1)}{2}}\|f\|_{L^2}.\]
\end{proof}

The $Q_{4,\gamma}=(1/2,1/q_\gamma,1/2)$ estimate is established by a $TT^*$ argument. We define by $T_{j,t}^{\pm}$ a Fourier integral operator with symbol in $S^0$ given as
\[T_{j,t}^{\pm}f(x)=\int e^{i\left(x\cdot \xi \pm t|\xi|\right)}\phi_j(\xi)\left(2^{j\frac{(d-1)}{2}}a(t,\xi)\right)\hat{f}(\xi)d\xi.\]
We can thus write $\mathcal{A}_jf(x,t)= 2^{-j(d-1)/2}T_tf(x)$. 
We define 
\[S_{n,j}g(x,\nu) \coloneqq 2^{-j(d-1)} \sum_{\nu'\in\mathcal{Z}_{n,j}(\nu)}(T_{j,t_{\nu}}^{\pm}(T_{j,t_{\nu'}}^{\pm})^*g(\cdot,\nu'))(x),\] 
where $\mathcal{Z}_{n,j}(\nu)$ is the set of all $\nu'\in \mathcal{Z}_{j}(E)$ such that $2^{-j+n-1}\leq |t_\nu-t_{\nu'}|<2^{-j+n}$. They interpolate
\begin{equation}\label{eq:AHRS1inf}\|S_{n,j}g(x,\nu)\|_{L^\infty_{x}(\ell^\infty_\nu)}\lesssim 2^{-\frac{n(d-1)}{2}+j}\|g(x,\nu)\|_{L^1_x(\ell^1_\nu)}\end{equation}
with
\[\left(\int\sum_{\nu}\left|S_{n,j}g(x,\nu)\right|^2dx\right)^{\frac{1}{2}}\lessapprox 2^{n\gamma-j(d-1)}\|g\|_{L^2_x(\ell^2_\nu)}.\]

The $Q_{4,\gamma}=(1/2,1/q_\gamma,1/2)$ estimate \eqref{eq:core2q2} follows directly from their argument by replacing the estimate \eqref{eq:AHRS1inf} in their paper with Lemma \ref{lem:AHRSadapted}. 
\begin{lemma}\label{lem:AHRSadapted}We have that
\[\|S_{n,j}g(x,\nu)\|_{L^{\infty}_x(\ell^2_{\nu})}\lessapprox 2^{j}2^{-n\frac{d-1}{2}}\|g(x,\nu)\|_{L^1_x(\ell^2_\nu)}.\]
\end{lemma}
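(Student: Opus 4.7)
The plan is to obtain a pointwise kernel bound for $S_{n,j}$ via the method of stationary phase, and then to pass from that pointwise bound to the $L^1_x(\ell^2_\nu)\to L^\infty_x(\ell^2_\nu)$ estimate via Minkowski's integral inequality together with Schur's test. The approach closely parallels the argument of \cite{andersonHughesRoosSeeger21} used to establish \eqref{eq:AHRS1inf}; the key difference is that the outer $\ell^2$ norm forces us to control the full $\ell^2 \to \ell^2$ operator norm of the kernel matrix rather than just its individual entries row by row.

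First I would write
\[S_{n,j}g(x,\nu) = \sum_{\nu'\in\mathcal{Z}_{n,j}(\nu)}\int K_{\nu,\nu'}(x,y)\, g(y,\nu')\,dy,\]
where $K_{\nu,\nu'}$ is the Schwartz kernel of $\mathcal{A}_{j,t_\nu}\mathcal{A}_{j,t_{\nu'}}^*$, an oscillatory integral with phase $(x-y)\cdot\xi \pm (t_\nu - t_{\nu'})|\xi|$ and symbol of size $|\xi|^{-(d-1)}$ on $|\xi|\sim 2^j$. Passing to polar coordinates $\xi = s\omega$, performing angular stationary phase at $\omega_0 = \pm(x-y)/|x-y|$ (contributing a factor $(s|x-y|)^{-(d-1)/2}$) and then integrating by parts in $s$, I expect the pointwise bound
\[|K_{\nu,\nu'}(x,y)| \lesssim 2^{j-n(d-1)/2}\bigl(1 + 2^j\bigl||x-y|-|t_\nu-t_{\nu'}|\bigr|\bigr)^{-N}\]
for $|t_\nu - t_{\nu'}|\sim 2^{n-j}$ and any $N$, with the degenerate regime $|x-y|\lesssim 2^{-j}$ handled by a separate non-stationary phase argument in $s$.

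Next I would apply Minkowski's integral inequality in $\nu$ to obtain
\[\|S_{n,j}g(x,\cdot)\|_{\ell^2_\nu} \leq \int \|\mathbf K(x,y)\|_{\ell^2 \to \ell^2}\,\|g(y,\cdot)\|_{\ell^2_\nu}\,dy,\]
where $\mathbf K(x,y)$ denotes the matrix with entries $K_{\nu,\nu'}(x,y)\mathbf 1_{\nu'\in\mathcal{Z}_{n,j}(\nu)}$. The lemma thereby reduces to the uniform matrix norm bound $\sup_{x,y}\|\mathbf K(x,y)\|_{\ell^2\to\ell^2}\lessapprox 2^j 2^{-n(d-1)/2}$. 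This I would establish by Schur's test: for each fixed $(x,y)$, the rapid decay of the kernel in $\bigl||x-y|-|t_\nu-t_{\nu'}|\bigr|$ at scale $2^{-j}$, together with the $2^{-j}$-separation of the $t_{\nu'}$'s, restricts each row sum to $O(1)$ effective entries (the rapid-decay tail being absorbed into the $2^{j\epsilon}$ factor implicit in $\lessapprox$), giving a row sum bound $\lesssim 2^j 2^{-n(d-1)/2}$; the column sums admit the same bound by symmetry, and Schur's test delivers the matrix norm.

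The main technical obstacle is the pointwise kernel estimate itself: the angular Hessian degenerates as $|x-y|\to 0$, and one must carefully patch together the main peak regime $|x-y|\sim 2^{n-j}$, where stationary phase yields the advertised bound, with the off-diagonal regimes $|x-y|\ll 2^{n-j}$ or $|x-y|\gg 2^{n-j}$, in which the phase admits no critical point and rapid decay follows from integration by parts. Once this pointwise bound with rapid decay is in place, the Minkowski/Schur reduction to the required $\ell^2\to\ell^2$ matrix norm bound is essentially routine.
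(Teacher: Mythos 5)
Your proof is correct and follows essentially the same route as the paper: pointwise kernel bounds for $T_{j,t_\nu}^{\pm}(T_{j,t_{\nu'}}^{\pm})^*$ via stationary phase, followed by the observation that the $2^{-j}$-separation of the $t_{\nu'}$ leaves only $O(1)$ effective terms (up to the tail, absorbed in the $\lessapprox$) at each fixed $|x-y|$. Your explicit Schur test on the matrix $\left(K_{\nu,\nu'}\right)$ is in fact the careful completion of the paper's row-sum argument, which as written applies the sum over $\nu'$ to $g(\cdot,\nu)$ rather than $g(\cdot,\nu')$; the column-sum bound you supply by symmetry is exactly what is needed to handle the $\nu'$-dependence of the input consistently with the definition of $S_{n,j}$.
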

\begin{proof}
In fact, we can write $T_{j,t_{\nu}}^{\pm}(T_{j,t_{\nu'}}^{\pm})^*f =K_{j,\nu,\nu'}^{\pm}*f$. We have the kernel bounds
\[|K_{j,\nu,\nu'}(x)|\lesssim_{N} 2^{jd}(1+2^j|x|)^{-\frac{d-1}{2}}(1+2^j||x|-|t_\nu-t_{\nu'}||)^{-N},\]
which are uniform over $\nu,\nu'$ (as can be verified by the method of stationary phase).
We have that 
\[S_{n,j}f(x,\nu)=2^{-j(d-1)}\sum_{\nu'\in \mathcal{Z}_{n,j}(\nu)}T_{j,t_{\nu}}^{\pm}(T_{j,t_{\nu'}}^{\pm})^*g(\cdot,\nu)(x).\]

Thus we find that
\[\left(\sum_{\nu}\left|\sum_{\nu'\in \mathcal{Z}_{n,j}(\nu)}T_{j,t_{\nu}}(T_{j,t_{\nu'}}^{\pm})^*g(\cdot,\nu)(x)\right|^2\right)^{\frac{1}{2}}\]
\[\lesssim_{N} \left(\sum_{\nu}\left|\sum_{\nu'\in \mathcal{Z}_{n,j}(\nu)}\int 2^{jd}(1+2^j|y|)^{-\frac{d-1}{2}}(1+2^j||y|-|t_\nu-t_{\nu'}||)^{-N}|g(x-y,\nu)|dy\right|^2\right)^{\frac{1}{2}}.\]
Notice that the $K_{j,\nu,\nu'}(y)$ is substantial only in the case that $||y|-|t_\nu-t_{\nu'}||\leq 2^{-j(1-\epsilon)}$, which can happen only for $|y|\sim 2^{n-j}$ and, for a given $\nu$, $O(2^{j\epsilon\gamma})$ $\nu'$'s. As such, up to a well controlled remainder term we consider momentarily, we have the bound
\[\lesssim 2^{j\epsilon\gamma}2^{jd}2^{-n\frac{d-1}{2}}\left(\sum_{\nu}\left|\int |g(x-y,\nu)|dy\right|^2\right)^{\frac{1}{2}}\]
\[\lesssim 2^{j\epsilon\gamma}2^{jd}2^{-n\frac{d-1}{2}}\int\left(\sum_{\nu}|g(x-y,\nu)|^2\right)^{\frac{1}{2}}dy.\]
For the remainder term, we apply Fubini to then sum first in $\nu'$. In this way, it is bounded by
\[\lesssim_{N} \left(\sum_{\nu}\left|\int_{||y|-|t_\nu-t_{\nu'}||\geq 2^{-j(1-\epsilon)}} 2^{jd}\sup_{\nu'\in \mathcal{Z}_{n,j}(\nu)}(1+2^j||y|-|t_\nu-t_{\nu'}||)^{-N}|g(x-y,\nu)|dy\right|^2\right)^{\frac{1}{2}}\]
\[\lesssim  2^{jd}2^{-n\frac{d-1}{2}} \int \left(\sum_{\nu}|g(y,\nu)|^2\right)^{\frac{1}{2}}dy,\]
for a suitable choice of $N$ depending on $\epsilon$.
\end{proof}

\subsection{The convolution kernel perspective}\label{subsec:kernelEsts}
With \[\mathcal{A}_{j}f(x,t)=2^{-\frac{j(d-1)}{2}}(2\pi)^{-d-1}\sum_{\pm}\int \kappa_{j}^{\pm}(y,t) f(x-y)dy,\]
we have, as in \cite{andersonHughesRoosSeeger21}, the kernel bound
\[|\kappa_{j}^{\pm}(x,t)|\lesssim_{N} 2^{jd}(1+2^j|x|)^{-\frac{d-1}{2}}(1+2^j||x|-|t||)^{-N}.\]
This can be verified simply by polar integration and the method of stationary phase. 

In this section, we prove the estimates \eqref{eq:core1inf1}, \eqref{eq:core111}, \eqref{eq:coreinfinfinf}. Indexed by $(1/p,1/q,1/r)$, these are the $(1,0,1)$,  $(1,1,1)$, and $(0,0,0)$ estimates.

\begin{proof}[Proof of $(1,0,1)$ estimate \eqref{eq:core1inf1}]
We see using the kernel estimate and the Fubini--Tonelli theorem that
\[\left\|\mathcal{A}_{j}f(x,t_\nu)\right\|_{L^\infty_x(\RR^d;\ell^1_\nu(\mathcal{Z}_j(E)))}\]
\[\lesssim_N 2^{-\frac{j(d-1)}{2}}\sup_{x}\int \sum_{\nu\in\mathcal{Z}_j(E)}\left|2^{jd}(1+2^j|y|)^{-\frac{d-1}{2}}(1+2^j||y|-|t_\nu||)^{-N} f(x-y)\right|dy.\]
It is easy to see, since $t_\nu\in [1,2]$, that the kernel is essentially supported for $ y\in A_{\frac{1}{4},4}(0)$.  The $t_\nu$ are $2^{-j}$ separated, so after summation, we have the bound, for $ y\in A_{\frac{1}{4},4}(0)$,
\[\sum_{\nu\in\mathcal{Z}_j(E)}2^{jd}(1+2^j|y|)^{-\frac{d-1}{2}}(1+2^j||y|-|t_\nu||)^{-N}\]
\[\lesssim\sup_{\nu\in\mathcal{Z}_j(E)}2^{jd}(1+2^j|y|)^{-\frac{d-1}{2}}(1+2^j||y|-|t_\nu||)^{-N}\]
\[\lesssim 2^j 2^{j\frac{d-1}{2}}.\]
For $y\notin A_{\frac{1}{4},4}(0)$,
\[\sum_{\nu\in\mathcal{Z}_j(E)}2^{jd}(1+2^j|y|)^{-\frac{d-1}{2}}(1+2^j||y|-|t_\nu||)^{-N}\]
\[\lesssim\sup_{\nu\in\mathcal{Z}_j(E)}2^{jd}(1+2^j|y|)^{-\frac{d-1}{2}}(1+2^j||y|-|t_\nu||)^{-N}\]
\[\lesssim 1.\]
The bound follows upon integration.
\end{proof}
\begin{proof}[Proof of $(1,1,1)$ estimate \eqref{eq:core111}]
By Young's inequality, we see that
\[\left\|\mathcal{A}_{j}f(x,t_\nu)\right\|_{L^1_x(\RR^d;\ell^1_\nu(\mathcal{Z}_j(E)))}\]
\[\lesssim_N 2^{-\frac{j(d-1)}{2}}\sum_{\nu\in\mathcal{Z}_j(E)}\int\int \left|2^{jd}(1+2^j|y|)^{-\frac{d-1}{2}}(1+2^j||y|-|t_\nu||)^{-N} f(x-y)\right|dydx\]
\[\lesssim 2^{-\frac{j(d-1)}{2}}|\mathcal{Z}_j(E)|\sup_{\nu\in\mathcal{Z}_j(E)}\int \left|2^{jd}(1+2^j|y|)^{-\frac{d-1}{2}}(1+2^j||y|-|t_\nu||)^{-N} \right|dy\|f\|_{L^1}\]
\[\lesssim |\mathcal{Z}_j(E)|\|f\|_{L^1}.\]
\end{proof}
\begin{proof}[Proof of $(0,0,0)$ estimate \eqref{eq:coreinfinfinf}]
Again, by Young's inequality, we find that, for each $\nu$
\[\left|\mathcal{A}_{j}f(x,t_\nu)\right|\]
\[\lesssim_N 2^{-\frac{j(d-1)}{2}}\int \left|2^{jd}(1+2^j|y|)^{-\frac{d-1}{2}}(1+2^j||y|-|t_\nu||)^{-N} \right|dy\|f\|_{L^\infty}\]
\[\lesssim \|f\|_{L^\infty},\]
where the bound can be taken uniformly in $\nu$ and $x$.
\end{proof}

\section{Interpolation}\label{sec:interpolation}
\subsection{Localisation and $\ell^r$ embedding}\label{sec:localisation}
Besides interpolation of inequalities, we have a number of localisation results that allow us to extend the range of $(1/p,1/q,1/r)$ estimates from a given $(1/p_0,1/q_0,1/r_0)$. The first is obtained simply by a direct application of H\"older's inequality in the $t$-variable.
\begin{lemma}\label{lem:holderTime}Suppose that $0< r \leq r_0$. Then we have that 
\[\left\|\mathcal{A}_{j}f(x,t_\nu)\right\|_{L^q_x(\RR^d;\ell^r_\nu(\mathcal{Z}_j(E)))}\lesssim \left|\mathcal{Z}_j(E)\right|^{\frac{1}{r}-\frac{1}{r_0}}\left\|\mathcal{A}_{j}f(x,t_\nu)\right\|_{L^q_x(\RR^d;\ell^{r_0}_\nu(\mathcal{Z}_j(E)))}.\]
\end{lemma}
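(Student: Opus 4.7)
The plan is to reduce this to a pointwise (in $x$) inequality between $\ell^r_\nu$ and $\ell^{r_0}_\nu$ norms, then integrate. Since $\mathcal{Z}_j(E)$ is finite, the $\ell^r_\nu$-spaces embed into each other with a multiplicative loss controlled by the cardinality $|\mathcal{Z}_j(E)|$, via H\"older's inequality applied to counting measure.

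Concretely, fix $x\in\RR^d$ and let $a_\nu=\mathcal{A}_j f(x,t_\nu)$. Writing $|a_\nu|^r = |a_\nu|^r\cdot 1$ and applying H\"older with conjugate exponents $r_0/r$ and $r_0/(r_0-r)$ across the sum over $\nu\in\mathcal{Z}_j(E)$ gives
\[\sum_{\nu\in\mathcal{Z}_j(E)}|a_\nu|^r \;\leq\; \Bigl(\sum_{\nu\in\mathcal{Z}_j(E)}|a_\nu|^{r_0}\Bigr)^{r/r_0}\cdot |\mathcal{Z}_j(E)|^{1-r/r_0},\]
so that, raising to the $1/r$ power,
\[\|\mathcal{A}_j f(x,t_\nu)\|_{\ell^r_\nu(\mathcal{Z}_j(E))} \;\leq\; |\mathcal{Z}_j(E)|^{\frac{1}{r}-\frac{1}{r_0}}\,\|\mathcal{A}_j f(x,t_\nu)\|_{\ell^{r_0}_\nu(\mathcal{Z}_j(E))}.\]
(When $r=r_0$ the exponent is zero and there is nothing to prove; the convention $r>0$ suffices to make the H\"older step meaningful.) The inequality is now pointwise in $x$ and the multiplicative constant $|\mathcal{Z}_j(E)|^{1/r-1/r_0}$ is independent of $x$.

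To finish, I would simply take the $L^q_x$ norm on both sides. Since the constant does not depend on $x$, this yields
\[\|\mathcal{A}_j f(x,t_\nu)\|_{L^q_x(\ell^r_\nu)} \;\leq\; |\mathcal{Z}_j(E)|^{\frac{1}{r}-\frac{1}{r_0}}\,\|\mathcal{A}_j f(x,t_\nu)\|_{L^q_x(\ell^{r_0}_\nu)},\]
which is the claimed bound. There is no genuine obstacle here: the only point worth noting is that the inequality is valid for any $0<q\le\infty$ precisely because the $\ell^r_\nu$-to-$\ell^{r_0}_\nu$ comparison is pointwise in $x$, so it passes through the outer norm without invoking Minkowski or any duality.
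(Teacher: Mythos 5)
Your argument is correct and is exactly the route the paper intends: the lemma is justified there by the one-line remark that it follows from H\"older's inequality in the $t$-variable, which is precisely your pointwise-in-$x$ application of H\"older with exponents $r_0/r$ and $r_0/(r_0-r)$ over the finite index set $\mathcal{Z}_j(E)$, followed by taking the $L^q_x$ norm. Nothing further is needed.
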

\begin{remark}Note that the lemma is stated for all $r>0$: this embedding may still be utilised in the analysis of variation seminorm estimates for generalisations of variation seminorm spaces to exponents $r<1$.
\end{remark}

\begin{lemma}\label{lem:holderSpace}Suppose that $1\leq r \leq \infty$ and $p_0\leq p \leq q \leq q_0$, with $(p_0,q_0,r)$ such that
\[\left\|\mathcal{A}_jf(x,t_\nu)\right\|_{L^{q_0}_x(\RR^d;\ell^r_\nu(\mathcal{Z}_j(E)))}\lesssim C_{j,p_0,q_0}\left\|f\right\|_{L^{p_0}(\RR^d)} .\]
Then 
\[\left\|\mathcal{A}_jf(x,t_\nu)\right\|_{L^q_x(\RR^d;\ell^r_\nu(\mathcal{Z}_j(E)))}\lesssim_N C_{j,p_0,q_0}\left\|f\right\|_{L^p(\RR^d)} + 2^{-jN} \left\|f\right\|_{L^p(\RR^d)}.\]
\end{lemma}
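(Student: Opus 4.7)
The plan is to exploit the essentially local structure of $\mathcal{A}_j$ at unit scale, transferring the $(p_0,q_0)$ bound cube-by-cube and then reassembling using $p\leq q$. From the kernel bound in Section~\ref{subsec:kernelEsts}, for $t\in[1,3]$ the convolution kernel $\kappa_j^\pm(\cdot,t)$ is concentrated in the annulus $\{1/2\leq|x|\leq 4\}$, with $2^{-jN}$-rapid decay outside (uniformly in the time parameter). Thus, up to a Schwartz tail of size $2^{-jN}$, $\mathcal{A}_j$ moves mass only a unit distance.

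I would fix a smooth partition of unity $\{\chi_k\}_{k\in\ZZ^d}$ subordinate to $\{B(k,1)\}_{k\in\ZZ^d}$ with bounded overlap, set $f_k=f\chi_k$, and decompose $\mathcal{A}_jf=\sum_k\mathcal{A}_jf_k$. For each $k$, the output $\mathcal{A}_jf_k(\cdot,t_\nu)$ is, modulo a $2^{-jN}$ tail, supported in the unit-scale set $B(k,6)$, while $f_k$ itself is supported in $B(k,1)$. Applying the hypothesis to $f_k$ gives
\[\|\mathcal{A}_jf_k(\cdot,t_\nu)\|_{L^{q_0}_x(\ell^r_\nu)}\lesssim C_{j,p_0,q_0}\|f_k\|_{L^{p_0}}.\]
H\"older on the unit-measure support of $f_k$ (using $p_0\leq p$) gives $\|f_k\|_{L^{p_0}}\lesssim\|f_k\|_{L^p}$, and H\"older on the unit-measure essential support $B(k,6)$ of the output (using $q\leq q_0$) gives
\[\|\mathcal{A}_jf_k(\cdot,t_\nu)\|_{L^q_x(\ell^r_\nu)}\lesssim C_{j,p_0,q_0}\|f_k\|_{L^p}+2^{-jN}\|f_k\|_{L^p},\]
the remainder absorbing the rapid-decay contribution outside $B(k,6)$, which is uniform in $\nu$ so survives the $\ell^r_\nu$ norm.

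To assemble the global estimate, each $x$ lies in only $O(1)$ of the sets $B(k,6)$, so the $\mathcal{A}_jf_k(\cdot,t_\nu)$ have bounded overlap in $x$, and (after again absorbing long-range tails)
\[\|\mathcal{A}_jf\|_{L^q_x(\ell^r_\nu)}^q\lesssim \sum_k\|\mathcal{A}_jf_k\|_{L^q_x(\ell^r_\nu)}^q+O\bigl(2^{-jNq}\|f\|_{L^p}^q\bigr).\]
Combining with the localized bound and invoking $\ell^p\hookrightarrow\ell^q$ for $p\leq q$ on the sequence $(\|f_k\|_{L^p})_k$,
\[\Big(\sum_k\|f_k\|_{L^p}^q\Big)^{1/q}\leq\Big(\sum_k\|f_k\|_{L^p}^p\Big)^{1/p}\lesssim\|f\|_{L^p}\]
by bounded overlap of the $\chi_k$, which delivers the claim.

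The main obstacle is simply bookkeeping: one must verify that the several super-polynomially-small contributions in $2^j$ (kernel tails from distant unit cubes, non-overlap of output supports, and the $2^{-jN}$ corrections picked up by each H\"older step) combine into a single $O(2^{-jN}\|f\|_{L^p})$ remainder. This is straightforward since the kernel decay is faster than any polynomial in $2^j$, and since all the geometric multiplicities (unit cubes meeting a given ball, $\chi_k$-overlaps, sets $B(k,6)$ containing a given $x$) are $O(1)$ uniformly in $j$, $\nu$, and $h\in[0,2^{-j}]$.
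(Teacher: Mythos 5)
Your proof is correct and rests on the same mechanism as the paper's: the unit-scale locality of $\mathcal{A}_j$ coming from the kernel bound (compactly supported main part plus an $O(2^{-jN})$ rapidly decaying tail), followed by H\"older's inequality on the unit-measure input and output supports using $p_0\leq p$ and $q\leq q_0$. The only difference is presentational: the paper argues for a single unit-scale block and leaves the global reassembly implicit, whereas you make the tiling by unit cubes, the bounded-overlap summation, and the $\ell^p\hookrightarrow\ell^q$ step (where $p\leq q$ enters) explicit.
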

\begin{proof}
Recall, with \[\mathcal{A}_{j}f(x,t)=2^{-\frac{j(d-1)}{2}}(2\pi)^{-d-1}\sum_{\pm}\int \kappa_{j}^{\pm}(y,t) f(x-y)dy,\]
the kernel is bounded 
\[|\kappa_{j}^{\pm}(x,t)|\lesssim_{N} 2^{jd}(1+2^j|x|)^{-\frac{d-1}{2}}(1+2^j||x|-|t||)^{-N}.\]
For $|x|\gg 1$ and $|t|\sim 1$, we have that 
\[|\kappa_{j}^{\pm}(x,t)|\lesssim_{N} 2^{-jM}(1+|x|)^{-N},\]
so we can decompose 
\[\kappa_{j}^{\pm}(x,t)=\tilde{\kappa}_{j}^{\pm}(x,t)+E_j(x,t),\]
where $ |E_j(x,t)|\lesssim 2^{-jM}(1+|x|)^{-M}$ and $\tilde{\kappa}_j$ is supported in the region $|x|\lesssim 1$.
Thus we find that 
\[\left(\int\left(\sum_{\nu\in\mathcal{Z}_j(E)}\left|\mathcal{A}_{j}f(x,t_\nu)\right|^r\right)^{\frac{q}{r}}dx\right)^{\frac{1}{q}}\]
\[\lesssim \left(\int_{|x|\lesssim 1}\left(\sum_{\nu\in\mathcal{Z}_j(E)}\left|\mathcal{A}_{j}f(x,t_\nu)\right|^r\right)^{\frac{q}{r}}dx\right)^{\frac{1}{q}}\]
\[+ 2^{-\frac{j(d-1)}{2}}\left(\int_{|x|\gg 1}\left(\sum_{\nu\in\mathcal{Z}_j(E)}\left|\int E_j(y,t_\nu)f(x-y,t_\nu)dy\right|^r\right)^{\frac{q}{r}}dx\right)^{\frac{1}{q}}.\]
We can bound the first expression by H\"older's inequality, the assumed inequality, followed by a further application of H\"older's inequality. To bound the second term, we choose $s$ such that 
\[1+\frac{1}{q}=\frac{1}{p}+\frac{1}{s}\]
and observe that we can ensure by a suitably large choice of $M$ we can bound it by 
\[2^{-jN}\|f\|_{L^p}\]
by an application of our bound on $\sup_{t\in[1,2]}|E_j(\cdot,t)|$ and Young's inequality.
\end{proof}

The next result follows simply from the embedding of $\ell^r$ norms.
\begin{lemma}\label{lem:ellrEmbedding}Suppose that $ r \geq r_0$. Then we have that 
\[\left\|\mathcal{A}_{j}f(x,t_\nu)\right\|_{L^q_x(\RR^d;\ell^r_\nu(\mathcal{Z}_j(E)))}\lesssim \left\|\mathcal{A}_{j}f(x,t_\nu)\right\|_{L^q_x(\RR^d;\ell^{r_0}_\nu(\mathcal{Z}_j(E)))}.\]
\end{lemma}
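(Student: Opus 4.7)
The inequality is the mixed-norm version of the standard nested embedding $\ell^{r_0} \hookrightarrow \ell^{r}$ valid for $0 < r_0 \leq r \leq \infty$ on any measure space, applied fibre-wise in $x$ to the sequence $\nu \mapsto \mathcal{A}_j f(x, t_\nu)$, and then lifted to the outer $L^q_x$ norm by monotonicity of integration. No oscillatory or kernel information about $\mathcal{A}_j$ is needed; only the sequence inequality.

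\textbf{Step 1.} Fix $x \in \RR^d$ and set $a_\nu \coloneqq \mathcal{A}_j f(x, t_\nu)$ for $\nu \in \mathcal{Z}_j(E)$. Recall the elementary $\ell^p$-nesting: for $0 < r_0 \leq r \leq \infty$ and any sequence $(a_\nu)$,
\[
\Bigl(\sum_\nu |a_\nu|^r\Bigr)^{1/r} \leq \Bigl(\sum_\nu |a_\nu|^{r_0}\Bigr)^{1/r_0},
\]
with the usual interpretation as $\sup_\nu |a_\nu|$ when $r = \infty$. This is proved by factoring out the $\ell^\infty$ norm and using $|a_\nu/\|a\|_\infty|^r \leq |a_\nu/\|a\|_\infty|^{r_0}$, or equivalently by noting the $\ell^{r_0}$ unit ball is contained in the $\ell^r$ unit ball.

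\textbf{Step 2.} Apply this pointwise in $x$ to obtain
\[
\Bigl(\sum_{\nu \in \mathcal{Z}_j(E)} |\mathcal{A}_j f(x, t_\nu)|^r\Bigr)^{1/r} \leq \Bigl(\sum_{\nu \in \mathcal{Z}_j(E)} |\mathcal{A}_j f(x, t_\nu)|^{r_0}\Bigr)^{1/r_0}.
\]
Raise both sides to the $q$-th power and integrate in $x$, using monotonicity of the integral; then take the $q$-th root. The result is precisely the claimed inequality, with implicit constant $1$.

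\textbf{Obstacle.} There really is none: the statement is a bookkeeping lemma packaging the nesting of $\ell^p$ norms in the notation used for the paper's interpolation scheme. The only mild subtlety is making sure the inequality is applied in the correct order (inner $\ell^r$ before outer $L^q_x$), which is automatic from the mixed-norm definition $\|g(x,\nu)\|_{L^q_x(\ell^r_\nu)} = \bigl\|\|g(x,\cdot)\|_{\ell^r_\nu}\bigr\|_{L^q_x}$.
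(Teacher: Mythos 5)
Your proof is correct and matches the paper's intent exactly: the paper states this lemma without proof, remarking only that it ``follows simply from the embedding of $\ell^r$ norms,'' which is precisely the pointwise nesting $\ell^{r_0}\hookrightarrow\ell^{r}$ followed by monotonicity of the $L^q_x$ integral that you spell out. Nothing is missing.
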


\subsection{Variation seminorm bounds}\label{subsec:endpointInterpolation}
In this section we carry out the proof of our main results. These are obtained simply by combining the estimates for $\mathcal{A}_j$ in Sections \ref{subsec:oscIntEsts} and \ref{subsec:kernelEsts} with linear interpolation and the tools introduced in Section \ref{sec:localisation}.

\begin{proof}[Proof of Theorem \ref{thm:main}]
It suffices to show that, for 
 \[\left(\frac{1}{p},\frac{1}{q},\frac{1}{r}\right)\in\left(\conv \left\lbrace Q_1,  \widetilde{Q_{2,\beta}}, \widetilde{Q_{3,\beta}}, \widetilde{Q_{4,\gamma}},Q_{2,\beta}, Q_{3,\beta}, Q_{4,\beta}, Q_{D,\beta,\gamma}, Q_{B,\beta}, Q_{C,\beta},Q_{D,\beta,\gamma}^1\right \rbrace\right)^\circ,\]
there exists some $\epsilon>0$ such that 
\[\|V^r\left(\mathcal{A}_jf(x,\cdot)\right)\|_{L^q_x}\lesssim 2^{-j\epsilon}\|f\|_{L^p}.\]

We prioritise the analysis for the estimates that can be obtained close to key faces of the above polytope. These faces are
\[\conv \left\lbrace Q_1,   Q_{D,\beta,\gamma},Q_{4,\gamma}\right \rbrace\]
\[\conv \left\lbrace Q_{B,\beta},Q_{2,\beta}, Q_{3,\beta},Q_{C,\beta}\right \rbrace\]
\[\conv \left\lbrace  Q_{C,\beta},Q_{3,\beta},Q_{4,\gamma}, Q_{D,\beta,\gamma}\right \rbrace.\]
Essentially, these faces correspond to critical thresholds for the interpolation of particular estimates: $(0,0,0)$--$(1/2,1/q_\gamma,1/2)$--$(1,0,1)$,  $(1/2,1/2,1/2)$--$(1,1,1)$--$(1,0,1)$ and $(1/2,1/2,1/2)$--$(1,1/q_\gamma,1)$--$(1,0,1)$, respectively. The fact the critical threshold is of two dimensions rather than a 1 dimensional subset of a planar segment reflects the fact we can also utilise the localisation property (H\"older's inequality) in the $\ell^r$ norm. The remaining estimates can be obtained once we have properly analysed the above faces by a simple application of the tools of Section \ref{sec:localisation}.

We consider estimates close to the first face \[\conv \left\lbrace Q_1,   Q_{D,\beta,\gamma},Q_{4,\gamma}\right \rbrace.\]
Corresponding to $Q_1=(0,0,0)$, we have the estimate \eqref{eq:coreinfinfinf}
\[\|\mathcal{A}_jf(x,t)\|_{L^\infty_x(\ell^\infty_t)}\lesssim \|f\|_{L^\infty}.\]

To see what happens close to $Q_{4,\gamma}$ and  $Q_{D,\beta,\gamma}$, we interpolate between the estimates $(1,0,1)$ and $(1/2,1/q_\gamma,1/2)$, \eqref{eq:core1inf1} and \eqref{eq:core2q2}. At $(1/p_\theta,1/q_\theta,1/r_\theta)=\theta(1,0,1)+(1-\theta)(1/2,1/q_\gamma,1/2)$, we have the estimate
\begin{equation}\label{eq:1012q2}\|\mathcal{A}_jf(x,t_\nu)\|_{L_x^{q_\theta}(\ell_\nu^{r_\theta}(\mathcal{Z}_j(E))}\lessapprox 2^{j\theta}2^{-\frac{(1-\theta)((d-1)^2-2\gamma)}{2(d-1+2\gamma)}}\|f\|_{L^{p_\theta}}.\end{equation}
Note that the $(1/2,1/q_\gamma,1/2)$ estimate is summable for all $d\geq 3$, but we require $\gamma\leq 1/2 \leq (-3+\sqrt{17})/4$ in the case $d=2$ (which is fine because $ \gamma\leq (-3+\sqrt{17})/4\leq 1/2$). It is easier to first consider $Q_{4,\gamma}$, the threshold at which the estimate \eqref{eq:1012q2} fails to be summable. The critical $\theta$ is such that $\theta-(1-\theta)\left(\frac{(d-1)^2-2\gamma}{2(d-1+2\gamma)}\right)=0$. Such a $\theta$ is given by
\[\theta\left(1+\frac{(d-1)^2-2\gamma}{2(d-1+2\gamma)}\right)=\frac{(d-1)^2-2\gamma}{2(d-1+2\gamma)}\]
\[\theta=\frac{(d-1)^2-2\gamma}{\left(2(d-1+2\gamma)+(d-1)^2-2\gamma\right)}=1-\frac{2(d-1+2\gamma)}{\left(2(d-1+2\gamma)+(d-1)^2-2\gamma\right)}.\]
The corresponding $(p,q,r)$ are such that 
\[\frac{1}{p}=\frac{1}{r}=\frac{(d-1)^2-2\gamma}{\left(2(d-1+2\gamma)+(d-1)^2-2\gamma\right)}+\frac{d-1+2\gamma}{\left(2(d-1+2\gamma)+(d-1)^2-2\gamma\right)}\]
\[=\frac{d(d-1)}{(d^2-1+2\gamma)}\]
and
\[\frac{1}{q}=\frac{2(d-1+2\gamma)}{\left(2(d-1+2\beta)+(d-1)^2-2\beta\right)}\frac{1}{q_{\beta}}.\]
\[q_\gamma=\frac{2(d-1+2\gamma)}{d-1}\]
So we get
\[\frac{1}{q}=\frac{d-1}{\left(d^2-1+2\gamma\right)}.\]
The critical vertex is thus $Q_{4,\gamma}$.

If we apply Lemma \ref{lem:holderTime} to \eqref{eq:1012q2}, we obtain, for $r\leq r_\theta$,
\begin{equation}\label{eq:1012q2Holder}\|\mathcal{A}_jf(x,t_\nu)\|_{L^{q_\theta}_x(\ell^r_\nu(\mathcal{Z}_j(E)))}\lessapprox |\mathcal{Z}_j(E)|^{\frac{1}{r}-\frac{1}{r_\theta}}2^{j\theta}2^{-\frac{(1-\theta)((d-1)^2-2\gamma)}{2(d-1+2\gamma)}}\|f\|_{L^{p_\theta}},\end{equation}
where $(1/p_\theta,1/q_\theta,1/r_\theta)=\theta(1,0,1)+(1-\theta)(1/2,1/q_\gamma,1/2)$. We can ensure this estimate is summable when $r=1$ and $\theta=0$ because $\gamma\leq -1+\sqrt{3}$ in the case $d=3$ or $\gamma \leq (-3+\sqrt{17})/4$ in the case $d=2$. Indeed, for $r\leq 2$,
\[\left\|\mathcal{A}_{j}f(x,t_\nu)\right\|_{L^{q_\gamma}_x(\RR^d;\ell^r_\nu(\mathcal{Z}_j(E)))}\lesssim |\mathcal{Z}_j(E)|^{\frac{1}{r}-\frac{1}{2}}2^{-j\frac{(d-1)^2-2\gamma}{2(d-1+2\gamma)}}\left\|f\right\|_{L^2}.\]
With $r=1$, we have 
$2^{\frac{j}{2}}2^{-j\frac{(d-1)^2-2\gamma}{2(d-1+2\gamma)}}$.
In the case $d=3$ this is
$2^{\frac{j\beta}{2}}2^{-j\frac{4-2\gamma}{2(2+2\gamma)}}$, which is summable only for $\gamma$ such that
\[(1+\gamma)\beta\leq 2-\gamma.\]
More restrictive is the inequality $(1+\gamma)\gamma\leq 2-\gamma$, which holds provided $\gamma\leq -1+\sqrt{3}$. In the case $d=2$ it is
$2^{\frac{j\beta}{2}}2^{-j\frac{1-2\gamma}{2(1+2\gamma)}}$, which is summable only for $\gamma$ such that
\[(2+2\gamma)\beta\leq 1-2\gamma.\]
More restrictive is the inequality $(1+2\gamma)\gamma\leq 1-2\gamma$, which holds provided $\gamma\leq (-3+\sqrt{17})/4$. 
The vertex $Q_{D,\beta,\gamma}$ corresponds to the critical exponent at which we can take $r=1$ and the estimate fails to be summable. The critical $\theta$ is such that 
\[\beta\left(1-\frac{1}{r_\theta}\right) +\theta-(1-\theta)\left(\frac{(d-1)^2-2\gamma}{2(d-1+2\gamma)}\right)=0.\]
The solution is given by 
\[\theta = \frac{2(d-1+2\gamma)}{d^2-\beta d + \beta + 2\gamma - 2 \beta\gamma -1}.\]
The corresponding $(p,q,r)$ are such that 
\[\left(\frac{1}{p},\frac{1}{q},\frac{1}{r}\right)=\left(\frac{2-\theta}{2},\theta\left(\frac{d-1}{2(d-1+2\gamma)}\right),1\right).\]
This vertex is precisely $Q_{D,\beta,\gamma}$.

Interpolating the estimates at $Q_{1}$, (close to) $Q_{4,\gamma}$, and (close to) $Q_{D,\beta,\gamma}$ shows there exist points $(1/p,1/q,1/r)$ arbitrarily close to any point of \[\conv \left\lbrace Q_1,   Q_{D,\beta,\gamma},Q_{4,\gamma}\right \rbrace\]
for which there exists $\epsilon>0$ such that 
\[\|\mathcal{A}_jf(x,t)\|_{L^q(\ell^r_t)}\lesssim 2^{-j\epsilon}\|f\|_{L^p}.\]

Let's now consider the face \[\conv \left\lbrace Q_{B,\beta},Q_{2,\beta}, Q_{3,\beta},Q_{C,\beta}\right \rbrace.\]

To see what happens close to $Q_{B,\beta}$ and $Q_{2,\beta}$, we interpolate between the $(1,1,1)$ estimate and the $(\frac{1}{2},\frac{1}{2},\frac{1}{2})$ estimates \eqref{eq:core111} and \eqref{eq:core222}. With $(1/p_\theta,1/q_\theta,1/r_\theta)=\theta(1,1,1)+(1-\theta)(1/2,1/2,1/2)$, we have the estimate
\begin{equation}\label{eq:111222}
\|\mathcal{A}_jf(x,t)\|_{L^{q_\theta}(\ell^{r_\theta})}\lesssim  |\mathcal{Z}_j(E)|^{\theta}2^{-(1-\theta)\frac{(d-1)}{2}} |\mathcal{Z}_j(E)|^{\frac{(1-\theta)}{2}}\|f\|_{L^{p_\theta}}.
\end{equation}
The vertex $Q_{2,\beta}$ corresponds to the threshold at which \eqref{eq:111222} fails to be summable. The critical $\theta$ is such that $\theta\beta-(1-\theta)\left(\frac{(d-1)}{2}-\frac{\beta}{2}\right)=0$. Such a $\theta$ is given by
\[\theta=\frac{d-1-\beta}{\beta+ d-1}=1-\frac{2\beta}{\beta+d-1}\]
and the corresponding $(p,q,r)$ are such that
\[\frac{1}{p}=\frac{1}{q}=\frac{1}{r}=\frac{d-1-\beta}{\beta+ d-1}+\frac{\beta}{\beta+d-1}=\frac{d-1}{\beta+d-1}.\]
The critical vertex is thus $Q_{2,\beta}$. If we apply Lemma \ref{lem:holderTime} to \eqref{eq:111222}, we obtain, for $r\leq r_\theta$,
\[\|\mathcal{A}_jf(x,t_\nu)\|_{L^{q_\theta}(\ell_\nu^{r_\theta}(\mathcal{Z}_j(E))}\lesssim |\mathcal{Z}_j(E)|^{\frac{1}{r}-\frac{1}{r_\theta}}|\mathcal{Z}_j(E)|^{\theta}2^{-j(1-\theta)\left(\frac{(d-1)}{2}\right)} |\mathcal{Z}_j(E)|^{\frac{(1-\theta)}{2}}\|f\|_{L^{p_\theta}},\]
where $(1/p_\theta,1/q_\theta,1/r_\theta)=\theta(1,1,1)+(1-\theta)(1/2,1/2,1/2)$. The vertex $Q_{B,\beta}$ corresponds to the threshold $\theta$ at which this estimate fails to be summable with $r=1$. Noting that $1/r_\theta=(1+\theta)/2$, we see that the critical $\theta$ satisfies the equation
\[\beta\left(1-(1+\theta)/2\right)+\beta\theta-(1-\theta)\frac{(d-1)}{2}+\beta(1-\theta)/2=0.\]
Thus we find $\theta$ is given by
\[\theta=\frac{d-1-2\beta}{d-1}=1-\frac{2\beta}{d-1}.\]
The corresponding vertex is $Q_{B,\beta}$. 

To see what happens close to $Q_{3,\beta}$ and $Q_{C,\beta}$, we interpolate the $(1/2,1/2,1/2)$ and $(1,0,1)$ estimates \eqref{eq:core222} and \eqref{eq:core1inf1}. For $(1/p_\theta,1/q_\theta,1/r_\theta)=(1-\theta)(1/2,1/2,1/2)+\theta(1,0,1)$, we have the estimate
\begin{equation}\label{eq:2221inf1}\left\|\mathcal{A}_jf(x,t_\nu)\right\|_{L^{q_\theta}_x(\RR^d;\ell^{r_\theta}_\nu(\mathcal{Z}_j(E))}\lesssim 2^{j\theta}2^{-j(1-\theta)\frac{(d-1)}{2}}|\mathcal{Z}_j(E)|^{(1-\theta)/2}\|f\|_{L^{p_\theta}}.\end{equation}
The critical exponent $Q_{3,\beta}$ is obtained when we find $\theta$ such that $\theta-(1-\theta)\left(\frac{(d-1)}{2}-\frac{\beta}{2}\right)=0$. Such a $\theta$ is given by
\[\theta=\frac{d-1-\beta}{d+1-\beta}=1-\frac{2}{d+1-\beta}.\]
The corresponding $(p,q,r)$ are such that 
\[\frac{1}{p}=\frac{1}{r}=\frac{d-1-\beta}{d+1-\beta}+\frac{1}{d+1-\beta}=\frac{d-\beta}{d+1-\beta}\]
and
\[\frac{1}{q}=\frac{1}{d+1-\beta}.\]
The critical vertex is thus $Q_{3,\beta}$. 
We can instead apply Lemma \ref{lem:holderTime} to \eqref{eq:2221inf1}. Notice that $1/r_\theta=(1-\theta)/2+\theta=(1+\theta)/2$. At $(1/p_\theta,1/q_\theta,1)=(1-\theta)(1/2,1/2,1)+\theta(1,0,1)$, we have the estimate
\[\left\|\mathcal{A}_jf(x,t_\nu)\right\|_{L^{q_\theta}_x(\RR^d;\ell^{1}_\nu(\mathcal{Z}_j(E))}\lesssim |\mathcal{Z}_j(E)|^{1-(1+\theta)/2}2^{j\theta}2^{-j(1-\theta)\frac{(d-1)}{2}}|\mathcal{Z}_j(E)|^{(1-\theta)/2}\|f\|_{L^{p_\theta}}.\]
The critical $\theta$ is now such that
\[(1-\theta)\beta/2+\theta-(1-\theta)(d-1)/2+(1-\theta)\beta/2=0.\]
Such a $\theta$ is given by
\[ \theta=\frac{d-1-2\beta}{d+1-2\beta}=1-\frac{2}{d+1-2\beta}.\]
The corresponding vertex is $Q_{C,\beta}$.

A further interpolation of the estimates we obtained close to the critical thresholds shows that arbitrarily close to any point of the face $\left\lbrace Q_{B,\beta},Q_{2,\beta}, Q_{3,\beta},Q_{C,\beta}\right \rbrace$ there exists $(1/p,1/q,1/r)$ such that
\[\left\|\mathcal{A}_jf(x,t_\nu)\right\|_{L^{q}_x(\RR^d;\ell^{r}_\nu(\mathcal{Z}_j(E))}\lesssim 2^{-j\epsilon}\|f\|_{L^p},\]
for some $\epsilon>0$.

We now consider the face
\[\conv \left\lbrace  Q_{C,\beta},Q_{3,\beta},Q_{4,\gamma}, Q_{D,\beta,\gamma}\right \rbrace.\]
The relevant estimates are the $(1/2,1/2,1/2)$, $(1/2,1/q_\gamma,1/2)$, and $(1,0,1)$ estimates \eqref{eq:core222}, \eqref{eq:core2q2}, and \eqref{eq:core1inf1}. In fact, we already considered what happens close to the line connecting $Q_{C,\beta}$ and $Q_{3,\beta}$ in our analysis of the face $\conv \left\lbrace Q_{B,\beta},Q_{2,\beta}, Q_{3,\beta},Q_{C,\beta}\right \rbrace$. Likewise, we already considered what happens near the line between  $Q_{4,\gamma}$ and $ Q_{D,\beta,\gamma}$ in our analysis of the face 
\[\conv \left\lbrace Q_1,   Q_{D,\beta,\gamma},Q_{4,\gamma}\right \rbrace.\]
Interpolating the estimates as previously, there exists some $(1/p,1/q,1/r)$ arbitrarily close to any point of $\conv \left\lbrace  Q_{C,\beta},Q_{3,\beta},Q_{4,\gamma}, Q_{D,\beta,\gamma}\right \rbrace$ for which
\[\left\|\mathcal{A}_jf(x,t_\nu)\right\|_{L^{q}_x(\RR^d;\ell^{r}_\nu(\mathcal{Z}_j(E))}\lesssim 2^{-j\epsilon}\|f\|_{L^p},\]
for some $\epsilon>0$.

The proof is now completed by applying Lemma \ref{lem:holderSpace} and Lemma \ref{lem:ellrEmbedding}.
\end{proof}


We now turn to Theorems \ref{thm:LSimplies} and Theorem \ref{thm:d2LSimplies}. These concern the variation seminorm estimates that can be obtained subject to a conjectured local smoothing estimate \eqref{eq:coreqqq}. To recall, this estimate is expressed as 
\[\left\|\mathcal{A}_{j}f(x,t_\nu+h)\right\|_{L^{q_{\mathrm{LS},\beta}}_x(\RR^d;\ell^{q_{\mathrm{LS},\beta}}_\nu(\mathcal{Z}_j(E)))}\lessapprox 2^{-\frac{j(d-1)}{2}}|\mathcal{Z}_j(E)|^{1/ q_{\mathrm{LS},\beta}}\|f\|_{L^{q_{\mathrm{LS},\beta}}}.\]
Many of the interpolated estimates utilised in the proof of Theorem \ref{thm:main} remain valid, so the following proofs are correspondingly abridged.

\begin{proof}[Proof of Theorem \ref{thm:LSimplies}]
Applying Lemma \ref{lem:holderTime} to the local smoothing estimate \eqref{eq:coreqqq} we obtain, for $r\leq q_{\mathrm{LS},\beta}$,
\[\left\|\mathcal{A}_{j}f(x,t_\nu+h)\right\|_{L^{q_{\mathrm{LS},\beta}}_x(\RR^d;\ell^{r}_\nu(\mathcal{Z}_j(E)))}\lessapprox 2^{-\frac{j(d-1)}{2}}|\mathcal{Z}_j(E)|^{1/r}\|f\|_{L^{q_{\mathrm{LS},\beta}}}.\]

Observe that
\[-\frac{(d-1)}{2}+\beta \leq 0.\]
We interpolate with the $(0,0,0)$ estimate. For $\theta\in (0,1)$ and $(1/p_\theta,1/q_\theta,1/r_\theta)=\theta(1/q_{\mathrm{LS},\beta},1/q_{\mathrm{LS},\beta},1/q_{\mathrm{LS},\beta})$ and $r\leq r_\theta$,
\[\left\|\mathcal{A}_{j}f(x,t_\nu+h)\right\|_{L^{q_{\theta}}_x(\RR^d;\ell^{r}_\nu(\mathcal{Z}_j(E)))}\lessapprox 2^{-\frac{j\theta(d-1)}{2}}|\mathcal{Z}_j(E)|^{1/r}\|f\|_{L^{q_{\theta}}}.\]
The critical $\theta$ when $r=1$ satisfies
\[-\frac{\theta(d-1)}{2}+\beta  = 0.\]
This corresponds to 
\[Q_{A,\beta}=\left(\frac{\beta}{(d-1+\beta)},\frac{\beta}{(d-1+\beta)},1\right).\]

It essentially remains to adjust our analysis of estimates close to the vertex $Q_{D,\beta,\gamma}$. Here we interpolate between the $(1,0,1)$ and local smoothing estimate at $(1/q_{\mathrm{LS},\beta},1/q_{\mathrm{LS},\beta},1/q_{\mathrm{LS},\beta})$ and . With $(1/p_\theta,1/q_\theta,1/r_\theta)=(1-\theta)(1,0,1)+\theta(1/q_{\mathrm{LS},\beta},1/q_{\mathrm{LS},\beta},1/q_{\mathrm{LS},\beta})$ and for $r\leq r_{\theta}$, we have the estimate
\[\left\|\mathcal{A}_{j}f(x,t_\nu+h)\right\|_{L^{q_{\theta}}_x(\RR^d;\ell^{r}_\nu(\mathcal{Z}_j(E)))}\lessapprox |\mathcal{Z}_j(E)|^{1/r-\frac{1}{r_\theta}} 2^{j(1-\theta)}2^{-j\theta\frac{(d-1)}{2}}|\mathcal{Z}_j(E)|^{\theta\frac{(d-1)}{2(d-1+\beta)}}\|f\|_{L^{p_{\theta}}}.\]
When $r=1$ the critical threshold at which this fails to be summable is determined by the equation
\[\theta\beta\left(1-\frac{(d-1)}{2(d-1+\beta)}\right)+(1-\theta)-\theta\frac{(d-1)}{2}+\beta\theta\frac{(d-1)}{2(d-1+\beta)}=0.\]
Equivalently,
\[\theta=\frac{2(d-1+\beta)}{(d-1)^2+(2-\beta)(d-1)+2\beta(1-\beta) }=1-\frac{(d-1)^2-\beta(d-1)-2\beta^2}{(d-1)^2+(2-\beta)(d-1)+2\beta(1-\beta) }\]
The corresponding $(1-\theta)(1,0,1)+\theta(1/q_{\mathrm{LS},\beta},1/q_{\mathrm{LS},\beta},1/q_{\mathrm{LS},\beta})$ is given by $Q_{D,\beta,\beta}$.
\end{proof}

\begin{proof}[Proof of Theorem \ref{thm:d2LSimplies}]
Applying Lemma \ref{lem:holderTime} to the local smoothing estimate \eqref{eq:coreqqq} we obtain, for $r\leq q_{\mathrm{LS},\beta}$,
\[\left\|\mathcal{A}_{j}f(x,t_\nu+h)\right\|_{L^{q_{\mathrm{LS},\beta}}_x(\RR^d;\ell^{r}_\nu(\mathcal{Z}_j(E)))}\lessapprox 2^{-\frac{j(d-1)}{2}}|\mathcal{Z}_j(E)|^{1/r}\|f\|_{L^{q_{\mathrm{LS},\beta}}}.\]

Likewise, applying Lemma \ref{lem:holderTime} to the $(1/2,1/2,1/2)$ \eqref{eq:core222} we obtain, for $r\leq 2$,
\[\left\|\mathcal{A}_{j}f(x,t_\nu+h)\right\|_{L^{2}_x(\RR^d;\ell^{r}_\nu(\mathcal{Z}_j(E)))}\lessapprox 2^{-\frac{j(d-1)}{2}}|\mathcal{Z}_j(E)|^{1/r}\|f\|_{L^{2}}.\]

The critical $r$ at which these estimates fail to be summable is 
\[\frac{1}{r}=\frac{d-1}{2\beta}.\]
The corresponding critical exponents are indexed by
\[Q_{5,\beta,\beta}=\left(\frac{d-1}{2(d-1+\beta)},\frac{d-1}{2(d-1+\beta)},\frac{d-1}{2\beta}\right)\]
and
\[Q_{6,\beta}=\left(\frac{1}{2},\frac{1}{2},\frac{d-1}{2\beta}\right).\]

Recall that in the case $\gamma>1/2$, the estimate \eqref{eq:core2q2} was not summable. With reference to our previous proof of Theorem \ref{thm:main}, the remaining modifications to consider are in our analysis of estimates close to $Q_{4,\gamma}$.

At $(1/p_\theta,1/q_\theta,1/r_\theta)=(1-\theta)(1,0,1)+\theta(1/q_{\mathrm{LS},\beta},1/q_{\mathrm{LS},\beta},1/q_{\mathrm{LS},\beta})$, we have the estimate
\[\left\|\mathcal{A}_{j}f(x,t_\nu+h)\right\|_{L^{q_{\theta}}_x(\RR^d;\ell^{r_{\theta}}_\nu(\mathcal{Z}_j(E)))}\lessapprox  2^{j(1-\theta)}2^{-j\theta\frac{(d-1)}{2}}|\mathcal{Z}_j(E)|^{\theta\frac{(d-1)}{2(d-1+\beta)}}\|f\|_{L^{p_{\theta}}}.\]
The critical $\theta$ at which this fails to be summable is determined by the equation
\[(1-\theta)-\theta\frac{(d-1)}{2}+\beta\theta\frac{(d-1)}{2(d-1+\beta)}=0.\]
Equivalently,
\[\theta=\frac{2(d-1+\beta)}{d^2-1+2\beta}=1-\frac{d^2-2d+1}{d^2-1+2\beta}.\]
The corresponding $(1-\theta)(1,0,1)+\theta(1/q_{\mathrm{LS},\beta},1/q_{\mathrm{LS},\beta},1/q_{\mathrm{LS},\beta})$ is $Q_{4,\gamma}$.
\end{proof}

\section{Sharpness}\label{sec:sharp}
\subsection{}\label{sec:translation}
The condition $p\leq q$ is a classic necessary condition for translation invariant operators \cite{hormander60}. 
The vertices $Q_{A,\beta,\beta}, Q_{B,\beta},Q_1,Q_{2,\beta},\widetilde{Q_{2,\beta}},Q_{5,\beta,\beta},Q_{6,\beta}$ all satisfy this with equality.

\subsection{}\label{sec:Shell}
Let $f_{j}(x)=\chi_{[t_1-2^{-j},t_1+2^{-j}]}(|x|)$, for some $t_1\in E$. It is simple to verify that for $y\in B(0,2^{-j-4})$ there exists $t_1\in E$ such that $|\sigma_{t_1}*f_{j}(y)|\sim 1$. Also, there is $t_2\in E$ such that, for the same $y$, $\sigma_{t_2}*f_{j}(y)=0$. We can also see that 
\[ \|f_{j}\|_{p}\lesssim 2^{-\frac{j}{p}}. \]
Testing the inequality 
\[\|V^r_E\left(\mathcal{A}f_{j}(x,\cdot)\right)\|_{L^q_x}\lesssim \|f_{j}\|_{p}\]
as $j\rightarrow \infty$, we require
\[2^{-\frac{jd}{q}}\lesssim 2^{-\frac{j}{p}},\]
from which we obtain the necessary condition
\[\frac{1}{p}\leq \frac{d}{q}.\]

The vertices $(1/p,1/q,1/r)\in \lbrace Q_1, Q_{4,\gamma}, \widetilde{Q_{4,\gamma}}\rbrace$ satisfy this inequality with equality. This test corresponds with the yellow faces in Figures \ref{fig:thmmain}, \ref{fig:thmLS}, and \ref{fig:d2LS}. 

\subsection{}\label{sec:Ball}
Let $f_{j}(x)= \chi_{B(0,2^{-j})}(x)$. We can establish that, for $|y|\in N_{2^{-j-3}}(E)$,
\[V^r_E\left(\mathcal{A}f_{j}(y,\cdot)\right)\gtrsim 2^{-j(d-1)}.\]
We also see that 
\[ \|f_{j}\|_{p}\lesssim 2^{-\frac{jd}{p}}.\]
 Testing the inequality 
\[\|V^r_E\left(\mathcal{A}f_{j}(y,\cdot)\right)\|_{L^q_y}\lesssim \|f_{j}\|_{p},\]
we require, for all $\epsilon>0$ and suitable $j$ which we can take arbitrarily large, 
\[2^{\frac{j(\beta-\epsilon)}{q}}2^{-\frac{j}{q}}2^{-j(d-1)}\lesssim 2^{-\frac{jd}{p}}.\]
Taking $j\rightarrow \infty$, we obtain the necessary condition
\[\frac{\beta-1}{q}+\frac{d}{p}\leq (d-1).\]

The vertices $(1/p,1/q,1/r)\in \lbrace \widetilde{Q_{3,\beta}},Q_{3,\beta},Q_{2,\beta},  \widetilde{Q_{2,\beta}}\rbrace$ satisfy this inequality with equality. This test corresponds with the blue faces in Figures \ref{fig:thmmain}, \ref{fig:thmLS}, and \ref{fig:d2LS}. 

\subsection{}\label{sec:Knapp}
Let $f_{j}(x)=\chi_{[-2^{-\frac{j}{2}},2^{-\frac{j}{2}}]^{d-1}\times[-2^{-j},2^{-j}]}(x)$. We can establish that for 
 $y\in R = [-2^{-\frac{j}{2}},2^{-\frac{j}{2}}]^{d-1}\times N_{2^{-j-4}(E)}$, 
\[V_r(\mathcal{A}f_{j})(y)\gtrsim 2^{-\frac{j(d-1)}{2}}.\]
We also see that 
\[ \|f_{j}\|_{p}\lesssim 2^{-\frac{j(d+1)}{2p}}. \]
Testing the inequality 
\[\| V^r_E\left(\mathcal{A}f_{j}(y,\cdot)\right)\|_{L^q_y}\lesssim \|f_{j}\|_{p}\]
as $j\rightarrow \infty$, we require
\[2^{-\frac{j(d-1)}{2q}}2^{-\frac{j(1-\beta)}{q}}2^{-\frac{j(d-1)}{2}}\lessapprox  2^{-\frac{j(d+1)}{2p}},\]
from which we obtain the necessary condition
\[\frac{d+1}{2p}\leq \frac{d-1}{2q}+\frac{d-1}{2}.\]

The vertices $(1/p,1/q,1/r)\in \lbrace Q_{3,\beta},  \widetilde{Q_{3,\beta}}\rbrace$ satisfy this inequality with equality. In the case $\beta=\gamma$, the vertices $(1/p,1/q,1/r)\in \lbrace \widetilde{Q_{4,\gamma}},  Q_{4,\gamma}\rbrace$ satisfy this inequality with equality. This test corresponds with the pink faces in Figures \ref{fig:thmmain}, \ref{fig:thmLS}, and \ref{fig:d2LS}.

\subsection{}\label{sec:assouadKnapp}
The following Knapp example, adapted to the Assouad spectrum, is taken from \cite{andersonHughesRoosSeeger21}. 
Given $\theta\in[0,1]$ and $\epsilon>0$, we consider a suitable scale $\delta=2^{-j}$ (which can be chosen arbitrarily small) such that a minimal cover of any $E\cap I_\theta$ by $\delta$-intervals, where $I_\theta$ is a suitable $\delta^\theta$-interval, contains $\gtrsim\delta^{-\gamma_\theta(1-\theta)+\epsilon}$  intervals.

We restrict our set $E$ to the suitable subinterval $I_\theta\subset [1,2]$ as above of width $\delta^\theta$. For some $t_0\in I_\theta$, we take $f_\delta$ to be the characteristic function of the $\delta$-neighbourhood of $t_0$-spherical cap of dimensions $\left(\delta^{(1-\theta)/2}\right)^{d-1}$.
The resulting inequality is
\[\left[\left(\frac{\delta}{\delta^{\theta}}\right)^{-\gamma_\theta+\epsilon}\times \delta \times \delta^{(d-1)(1+\theta)/2}\right]^{\frac{1}{q}}\times \delta^{(d-1)(1-\theta)/2}\]
\[\lesssim \left[\delta \times \delta^{(d-1)(1-\theta)/2}\right]^{\frac{1}{p}}.\]
Thus we obtain the necessary condition
\[-\frac{(1-\theta)\gamma_\theta}{q} +(d-1)\frac{(1+\theta)}{2q}+\frac{1}{q}+(1-\theta)\frac{(d-1)}{2}\geq \frac{1}{p}+(d-1)\frac{(1-\theta)}{2p}.\]

In the Assouad-regular case $\gamma_\theta=\gamma$, where $\theta=1-\beta/\gamma$, the vertices $(1/p,1/q,1/r)\in \lbrace \widetilde{Q_{4,\gamma}},  Q_{4,\gamma},Q_{3,\beta},  \widetilde{Q_{3,\beta}}\rbrace$ satisfy this inequality with equality. This test still corresponds with the pink faces in Figures \ref{fig:thmmain}, \ref{fig:thmLS}, and \ref{fig:d2LS} (however see \cite{roosSeeger23} with regards to the non-regular case).

\subsection{}\label{sec:multiShell}
Let $f_{j,\omega}(x)=\sum_{l=1}^{N_j} (-1)^{r_l(\omega)} \chi_{I_l}(|x|)$, $I_1,I_2,\ldots , I_{N_j}$ are $2^{-j}$-intervals 
covering $E$, with $N_j$ minimal and $2^{-j}$-separated endpoints and $r_l(\omega)$ are randomised signs. By dimensional considerations, for suitable $j$, which we can take arbitrarily large, $N_j\geq 2^{j(\beta-\epsilon)}$ for any $\epsilon>0$. It is simple to verify  that for $t\in E$ and $y\in B(0,2^{-j-4})$ $|\sigma_t*f_{j,\omega}(y)|\sim 1$. By a randomisation argument
, since there are $N_j$ intervals with randomised signs, there is with positive probability some $\omega$ such that 
\[V_r(\mathcal{A}f_{j,\omega})(y)\gtrsim N_j^{\frac{1}{r}}\gtrsim 2^{\frac{j(\beta-\epsilon)}{r}},\]
 for $y\in B(0,2^{-j-4})$. We can also see that 
\[\|f_{j,\omega}\|_{p}\lesssim \left(2^{-j}2^{j(\beta+\epsilon)}\right)^{\frac{1}{p}}.\] Testing the inequality 
\[\|V_r(\mathcal{A}f_{j,\omega})\|_{q}\lesssim \|f_{j,\omega}\|_{p}\]
as $j\rightarrow \infty$, we require
\[2^{-\frac{jd}{q}}2^{\frac{j(\beta-\epsilon)}{r}}\lesssim \left(2^{-j}2^{j(\beta+\epsilon)}\right)^{\frac{1}{p}},\]
from which we obtain the necessary condition
\[\frac{1-\beta}{p}+\frac{\beta}{r}\leq \frac{d}{q} .\] 

The vertices $(1/p,1/q,1/r)\in \lbrace Q_1, Q_{A,\beta,\beta},Q_{D,\beta,\gamma},Q_{4,\gamma},Q_{5,\beta,\beta}\rbrace$ satisfy this inequality with equality. This test corresponds with the orange faces in Figures \ref{fig:thmmain}, \ref{fig:thmLS}, and \ref{fig:d2LS}. 

\subsection{}\label{sec:multiBall}
We have not been able to construct an example in this region, but include a rough description of what one might like like (similar to \cite{beltranOberlinRoncalSeegerStovall22}). We require an array of $\delta^{-\beta-\epsilon}$ characteristic functions of balls of radius $\delta$, which are equipped with some randomised sign. Furthermore, we require that on a region $R$ of measure $C^{d-1}\times \left(\delta\times \delta^{-\beta+\epsilon}\right)$ the dilations of the sphere centred on a point in $R$ by $t\in E$ intersect with $\delta^{-\beta+\epsilon}$ of those balls. The resulting necessary condition would then be
\[\frac{d-\beta}{p}+\frac{\beta}{r}\leq (d-1) + \frac{1-\beta}{q} .\]

The vertices $(1/p,1/q,1/r)\in \lbrace Q_{3,\beta}, Q_{C,\beta},Q_{B,\beta},Q_{2,\beta},Q_{6,\beta}\rbrace$ satisfy this inequality with equality. This inequality corresponds with the red faces in Figures \ref{fig:thmmain}, \ref{fig:thmLS}, and \ref{fig:d2LS}. 

\subsection{}\label{sec:multiKnapp}
In the following example we consider Cantor sets $E\subset [1,2]$ whose first generation $E_1$ can be expressed
\[E_1=\cup_{k=0}^{2^{a}-1}[1+k2^{-a},1+k2^{-a}+2^{-b}],\] 
for integers $2\leq a < b$. The Minkowski dimension of $E$ is $\beta=\frac{a}{b}$. This set has significant arithmetic structure: note that, for $1\leq l \leq 2^{a-1}$, $|E_1\cap (E_1 + l2^{-a})|\geq |E_1|/2$.

We let $\delta=2^{-jb}$ be some small positive quantity. Notice that the left endpoints of the intervals of $E_{j}$ (the $j$th generation of the Cantor set) are uniformly $\delta^{\beta}$-separated and there are $\delta^{-\beta}$ such intervals. 
Let $f_{j,\omega}(x)=\sum_{l=1}^{2^{ja}} (-1)^{r_l(\omega)} \chi_{[-\delta,\delta]\times[-\delta^{\frac{1}{2}},\delta^{\frac{1}{2}}]^{d-1}}(x-(1+l2^{-ja})e_1)$.
By a randomisation argument, we can establish that for some $\omega$ and suitable subset 
 $R \subset [-\delta^{\frac{1}{2}},\delta^{\frac{1}{2}}]^{d-1}$ with $|R|\gtrsim \delta^{\frac{d-1}{2}}|E_{j}|\sim \delta^{\frac{d-1}{2}} \delta^{1-\beta}$, $y\in R$ implies
\[V_r(\mathcal{A}f_{j,\omega_j})(y)\gtrsim \delta^{\frac{\beta}{r}}\delta^{\frac{(d-1)}{2}}.\]
We also have that $\|f_j\|_p\sim \delta^{-\frac{\beta}{p}}\delta^{\frac{d-1}{2p}}\delta^{\frac{1}{p}}$.
Testing the inequality 
\[\|V_r(\mathcal{A}f_{j,\omega_j})\|_{q}\lesssim \|f_{j,\omega_j}\|_{p}.\]
As $\delta\rightarrow 0$ we obtain the necessary condition
\[\frac{d-1}{2q}+\frac{1-\beta}{q}-\frac{\beta}{r}+\frac{d-1}{2}\geq -\frac{\beta}{p}+\frac{d-1}{2p}+\frac{1}{p}.\]

The vertices $(1/p,1/q,1/r)\in \lbrace Q_{C,\beta},Q_{3,\beta}\rbrace$ satisfy this inequality with equality. In the case $\beta=\gamma$, the vertices $(1/p,1/q,1/r)\in \lbrace Q_{4,\gamma},Q_{D,\beta,\gamma},Q_{5,\beta,\beta},Q_{6,\beta}\rbrace$ also satisfy this inequality with equality. This inequality corresponds with the green faces in Figures \ref{fig:thmmain}, \ref{fig:thmLS}, and \ref{fig:d2LS}. 

A Knapp example similar to the previous one, adapted to Assouad spectrum in the fashion we used in Section \ref{sec:assouadKnapp} gives a more refined condition. We choose our set $E$ similar to the above but where the translation invariant structure is realised on subintervals of width $\delta^\theta$. For certain sets $E$ with Minkowski dimension $\beta$ and Assouad spectrum $\gamma_\theta$. 
As $\delta\rightarrow 0$ we obtain the necessary condition
\[-\frac{(1-\theta)\gamma_\theta}{r}+\frac{(1-\theta)(d-1)}{2}+\frac{1-(1-\theta)\gamma_\theta}{q}+\frac{(d-1)(1+\theta)}{2q}\geq -\frac{(1-\theta)\gamma_\theta}{p}+\frac{1}{p}+\frac{(1-\theta)(d-1)}{2p}.\]
In the Assouad-regular case $\gamma_\theta=\gamma$, where $\theta=1-\beta/\gamma$, the vertices  $(1/p,1/q,1/r)\in \lbrace Q_{4,\gamma},Q_{D,\beta,\gamma},Q_{C,\beta},Q_{3,\beta}\rbrace$ satisfy this inequality with equality. 

\begin{appendix}

\section{Local smoothing}\label{sec:LS}
In this section, we use the notation $\mathcal{L}^p_s$ to denote the Bessel potential space of functions $f$ which are $L^p$-integrable with respect to the frequency multiplier $(1+|\xi|^2)^{-s/2}$, i.e.
\[\int\left|\int e^{i x\cdot \xi} (1+|\xi|^2)^{-s/2}\hat{f}(\xi)\right|^pdx<\infty.\]
However, as previously we denote $\|f(x)\|_{L^p_x}^p=\int |f(x)|^pdx$. The presence of integration variables and use of calligraphic $\mathcal{L}$ should distinguish between uses.

We have the following fixed time estimate from  \cite{seegerSoggeStein91}.
\begin{theorem}
With  $\bar{s}_p=\frac{d-1}{2}+\frac{1}{p}-\frac{d}{p}$,
\[\left\|e^{ \pm i t \sqrt {-\Delta}}f\right\|_{\mathcal{L}^p_{-\bar{s}_p}}\lesssim \|f\|_{L^p},\]
for $t\in[1,2]$.
\end{theorem}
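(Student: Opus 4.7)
The approach is the classical Peral--Miyachi argument (a special case of the Seeger--Sogge--Stein $L^p$ bound for Fourier integral operators): Littlewood--Paley decomposition combined with Riesz--Thorin interpolation between the $L^2$ Plancherel estimate and an $L^1$ bound from direct kernel estimation. First, decompose $f = P_0 f + \sum_{j \geq 1} P_j f$ using the frequency projections already fixed in Section \ref{sec:embedding}. The low-frequency piece $e^{\pm it\sqrt{-\Delta}} P_0$ is convolution with a Schwartz function uniformly for $t \in [1,2]$, hence $L^p$-bounded with no derivative loss. For $j \geq 1$, the convolution kernel $K_j^\pm(\cdot, t)$ of $e^{\pm it\sqrt{-\Delta}} P_j$, computed by stationary phase exactly as in Section \ref{subsec:kernelEsts}, satisfies
\[|K_j^\pm(x,t)| \lesssim_N 2^{jd}(1 + 2^j|x|)^{-(d-1)/2}(1 + 2^j||x| - t|)^{-N}.\]

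Two endpoint estimates then drive the argument. Plancherel immediately yields $\|e^{\pm it\sqrt{-\Delta}} P_j f\|_{L^2} \lesssim \|P_j f\|_{L^2}$ with no $j$-dependent loss. Integrating the kernel bound in polar coordinates --- the mass concentrates on a spherical shell of radius $\sim t$ and thickness $2^{-j}$, on which the radial factor is $\sim 2^{-j(d-1)/2}$ --- gives $\|K_j^\pm(\cdot,t)\|_{L^1} \lesssim 2^{j(d-1)/2}$, whence $\|e^{\pm it\sqrt{-\Delta}} P_j f\|_{L^1} \lesssim 2^{j(d-1)/2} \|P_j f\|_{L^1}$, with the dual $L^\infty$ bound following. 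Riesz--Thorin interpolation between these endpoint bounds then produces
\[\|e^{\pm it\sqrt{-\Delta}} P_j f\|_{L^p} \lesssim 2^{j|\bar s_p|} \|P_j f\|_{L^p}\]
uniformly for $t \in [1,2]$ and $1 \leq p \leq \infty$, where $|\bar s_p|=(d-1)|1/p-1/2|$ is exactly the loss of derivatives in the Bessel potential norm.

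To pass from these frequency-localized bounds to the stated $\mathcal{L}^p_{-\bar s_p}$ estimate, I would apply the Littlewood--Paley square function characterisation of Bessel potential spaces, valid for $1 < p < \infty$, together with the near-orthogonality relation $P_k \circ e^{\pm it\sqrt{-\Delta}} \circ P_j = 0$ for $|k-j| > C$, which ensures that the dyadic pieces of $e^{\pm it\sqrt{-\Delta}} f$ coincide (up to $O(1)$ index shifts) with $e^{\pm it\sqrt{-\Delta}} P_j f$. The principal technical ingredient is the stationary-phase kernel bound, which is effectively already recorded in Section \ref{subsec:kernelEsts}; the rest is routine bookkeeping, and the only delicate point is to track the $j$-dependence through Riesz--Thorin uniformly in $t \in [1,2]$, which is standard.
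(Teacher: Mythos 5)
The paper does not prove this theorem at all: it is quoted verbatim from Seeger--Sogge--Stein \cite{seegerSoggeStein91} as a known fixed-time estimate, included in the appendix only as context for the local smoothing discussion. So there is no internal proof to compare against, and your attempt must be judged on its own merits.

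Your argument has the right skeleton (Littlewood--Paley decomposition, the stationary-phase kernel bound, $L^1$ and $L^2$ endpoints for each dyadic piece, interpolation), and every step up to and including the frequency-localized bound $\|e^{\pm it\sqrt{-\Delta}}P_jf\|_{L^p}\lesssim 2^{j(d-1)|1/2-1/p|}\|P_jf\|_{L^p}$ is correct. The genuine gap is in the final summation, which is precisely where the content of the theorem lies. Summing these dyadic bounds at the critical regularity $\bar s_p$ requires the inequality $\sum_j 2^{j\bar s_p}\|P_jh\|_{L^p}\lesssim\|h\|_{\mathcal{L}^p_{\bar s_p}}$, i.e.\ the embedding $F^{\bar s_p}_{p,2}\hookrightarrow B^{\bar s_p}_{p,1}$, which is false; the triangle inequality over dyadic blocks therefore only yields the estimate with regularity $s>\bar s_p$, an $\epsilon$-loss off the endpoint. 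The square-function route you propose instead does not rescue this: the scalar bounds $\|e^{\pm it\sqrt{-\Delta}}P_jf\|_{L^p}\lesssim 2^{j\bar s_p}\|P_jf\|_{L^p}$ do not self-improve to the $\ell^2$-valued inequality $\bigl\|\bigl(\sum_j 2^{-2j\bar s_p}|e^{\pm it\sqrt{-\Delta}}P_jf|^2\bigr)^{1/2}\bigr\|_{L^p}\lesssim\bigl\|\bigl(\sum_j|P_jf|^2\bigr)^{1/2}\bigr\|_{L^p}$ except at $p=2$. The sharp endpoint is obtained in \cite{seegerSoggeStein91} (and earlier by Peral and Miyachi for the Euclidean half-wave) by proving the Hardy space bound $\|\langle D\rangle^{-(d-1)/2}e^{\pm it\sqrt{-\Delta}}f\|_{L^1}\lesssim\|f\|_{H^1}$ via a second, angular dyadic decomposition of each frequency annulus into $\sim 2^{j(d-1)/2}$ sectors adapted to the wave front, and then interpolating the analytic family $\langle D\rangle^{-z}e^{\pm it\sqrt{-\Delta}}$ between this $H^1\to L^1$ bound and the trivial $L^2$ bound. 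Without that additional ingredient your argument proves a weaker (non-sharp) statement; with it, the rest of your outline is indeed routine bookkeeping.
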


The local smoothing conjecture was formulated in \cite{Sogge91}.
\begin{conjecture}For $\sigma<\frac{1}{p}$ and $\frac{2d}{d-1}\leq p < \infty$ and $\sigma < \bar{s}_p$ if $2<p\leq \frac{2d}{d-1}$
\[\left(\int_1^2 \left\|e^{ \pm i t \sqrt {-\Delta}}f\right\|_{\mathcal{L}^p_{-\bar{s}_p+\sigma}}^p dt \right)^{\frac{1}{p}} \lesssim \|f\|_{L^p}.\]
\end{conjecture}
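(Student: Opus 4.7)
The plan is to prove the conjecture by combining a Littlewood--Paley decomposition, a parabolic rescaling that converts the half-wave propagator into an extension operator for the truncated cone, and $\ell^p$-decoupling. First I would write $f=\sum_j P_jf$ with $P_j$ a smooth frequency projection onto $|\xi|\sim 2^j$; by Littlewood--Paley theory and the summability properties of $2^{j(\bar s_p-\sigma)}$ (with any positive $\epsilon$-loss absorbed for $\sigma$ strictly inside the claimed range), it suffices to show
\[\left(\int_1^2\bigl\|e^{\pm it\sqrt{-\Delta}}P_jf\bigr\|_{L^p_x}^p\,dt\right)^{1/p}\lessapprox 2^{j(\bar s_p-\sigma)}\|P_jf\|_{L^p}.\]
Rescaling $(x,t)\mapsto(2^{-j}x,2^{-j}t)$ converts this into an essentially equivalent extension estimate for the truncated cone $\{(\xi,|\xi|):|\xi|\sim 1\}$ over a spatial-time box of size $2^j$, with the symbol of order $-(d-1)/2$ arising from stationary phase accounted for in the target power of $2^j$.

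The next step is to apply Bourgain--Demeter $\ell^p$-decoupling for the cone at scale $2^{-j/2}$, which decomposes the extension operator into contributions from $2^{-j/2}$-caps with an $\epsilon$-loss. At the decoupling exponent $p_c=2(d+1)/(d-1)$ and above, combining the cap estimates with Minkowski and the trivial $L^2$ bound on a single cap yields the bound with $\sigma$ arbitrarily close to $1/p$. At the other extreme $p=2$, the estimate with $\sigma=0$ is immediate from Plancherel. Interpolating between these two endpoints produces the claimed range $\sigma<\bar s_p$ for $2<p\le 2d/(d-1)$ and $\sigma<1/p$ for $p\ge p_c$; at $p=2d/(d-1)$ the two thresholds coincide, providing a common endpoint.

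The genuinely hard regime is $p\in(2d/(d-1),\,2(d+1)/(d-1))$, where decoupling alone is insufficient. Here one must supplement it with Kakeya-type input in the spirit of Wolff and Guth--Wang--Zhang: one partitions the cone into $2^{-j/2}$-caps, organises the wave packets into plate-like tubes, and runs an induction on scales that exchanges spatial localisation for improvement in the Kakeya bound. In dimension $d=2$, Guth--Wang--Zhang closed this gap using sharp plane-tube Kakeya estimates; the analogue in higher dimensions would require the corresponding maximal Kakeya-type bound for $(d-1)$-dimensional plates in $\RR^{d+1}$.

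The main obstacle is precisely this last point: in dimensions $d\ge 3$ the required Kakeya-type bound for plates tangent to the cone is a deep open problem, essentially of the same difficulty as the local smoothing conjecture itself. So the proposal reduces the conjecture, in the easy ranges, to standard decoupling and interpolation, and in the hard range, to a Kakeya-type conjecture for cone plates; a complete proof for general $d$ would hinge on resolving the latter.
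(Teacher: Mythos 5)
You are attempting to prove Sogge's local smoothing conjecture, which the paper does not prove: it is quoted verbatim from \cite{Sogge91} purely as background for Conjecture \ref{conj:roughSmoothing}, and it remains open for $d\geq 3$ (only $d=2$ is settled, by Guth--Wang--Zhang). Your proposal in effect acknowledges this in its final paragraph, where the range $2d/(d-1)<p<2(d+1)/(d-1)$ is reduced to an unproven Kakeya-type bound for plates tangent to the cone. What you have written is therefore a (mostly accurate) survey of the known partial results --- Bourgain--Demeter decoupling for $p\geq 2(d+1)/(d-1)$, the $d=2$ theorem --- together with a statement of the remaining obstruction; it is not a proof, and there is no proof in the paper to compare it with.

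Beyond that, one step is concretely wrong. You claim that interpolating the trivial $L^2$ bound ($\sigma=0$ at $p=2$) with the decoupling bound ($\sigma<1/p_c$ at $p_c=2(d+1)/(d-1)$) ``produces the claimed range $\sigma<\bar{s}_p$ for $2<p\leq 2d/(d-1)$''. It does not. Writing $1/p=\theta/2+(1-\theta)/p_c$, the interpolated estimate lands in $\mathcal{L}^p_{s}$ with
\[s=-(1-\theta)\left(\bar{s}_{p_c}-\tfrac{1}{p_c}\right)=-(1-\theta)\,\frac{d-1}{2(d+1)},\]
which is bounded away from $0$ for every fixed $p>2$, whereas the conjecture in the subcritical range asks for $s=-\bar{s}_p+\sigma$ arbitrarily close to $0$ from below. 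In fact the subcritical statement for $2<p\leq 2d/(d-1)$ is equivalent, by interpolation with $L^2$, to the critical case $p=2d/(d-1)$, which sits squarely inside the open range you defer to Kakeya-type input. So even granting everything you invoke, the argument establishes the estimate only for $p\geq 2(d+1)/(d-1)$ and for $d=2$ --- exactly the current state of the art --- and the statement must remain a conjecture, which is how the paper uses it (only the rough-time-average variants, Conjectures \ref{conj:roughLS} and \ref{conj:roughLSMeasure}, enter as hypotheses in Theorems \ref{thm:LSimplies} and \ref{thm:d2LSimplies}).
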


In this context, we make a slight reformulation of Conjecture \ref{conj:roughLSMeasure}.
\begin{conjecture}[Local smoothing for rough time-averages]\label{conj:roughSmoothing}
Let $\mu$ be Ahlfors--David-regular measure with $\supp \mu \subset [1,2]$ such that $\mu(B(t,r))\sim r^\beta$ for all balls $B(t,r)$ for $r<\diam\left(\supp \mu\right)$, 
\[\left(\int_1^2\left\|e^{ \pm i t \sqrt {-\Delta}}f\right\|_{\mathcal{L}^p_{-\bar{s}_p+\sigma}}^p d\mu(t) \right)^{\frac{1}{p}} \lesssim \|f\|_{L^p},\]
for $\sigma<\frac{\beta}{p}$ and $\frac{2(d+\beta-1)}{d-1}\leq p < \infty$ and $\sigma < \bar{s}_{p}$ if $2<p\leq \frac{2(d+\beta-1)}{d-1}$.
Furthermore, the constant can be taken uniformly over the class of all such measures.
\end{conjecture}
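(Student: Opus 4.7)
The plan is to reduce the statement, via Littlewood--Paley, to a critical frequency-localized endpoint that coincides with Conjecture \ref{conj:roughLSMeasure}; taking that as input, the remaining $(p, \sigma)$ ranges follow by interpolation with Plancherel at $p=2$ and with Seeger--Sogge--Stein at $p = \infty$. Concretely, decompose $f = \sum_j P_j f$ and use the square-function characterization $\|g\|_{\mathcal{L}^p_s} \sim \|(\sum_j 2^{2js} |P_j g|^2)^{1/2}\|_{L^p_x}$ valid for $1 < p < \infty$. Pushing the $L^p(d\mu)$ norm inside via Minkowski's inequality (in the direction favorable to the exponent pair at hand), the task reduces to the summable-in-$j$ family of frequency-localized bounds
\[
\Bigl(\int \|e^{\pm it\sqrt{-\Delta}} P_j f\|_{L^p}^p \, d\mu(t)\Bigr)^{1/p} \lessapprox 2^{j(\bar{s}_p - \sigma)} \|P_j f\|_{L^p},
\]
with the strict inequalities $\sigma < \bar{s}_p$ and $\sigma < \beta/p$ providing the $2^{-j\eta}$ factor needed to sum in $j$.

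The key numerological observation is that $q_{\mathrm{LS},\beta}$ is precisely the exponent at which $\bar{s}_p = \beta/p = \beta(d-1)/(2(d-1+\beta))$. Hence at the critical pair $(p, \sigma) = (q_{\mathrm{LS},\beta}, \beta/q_{\mathrm{LS},\beta})$ the right-hand side factor $2^{j(\bar{s}_p - \sigma)}$ reduces to a constant and the localized bound becomes exactly Conjecture \ref{conj:roughLSMeasure}. Interpolating this endpoint with the trivial Plancherel estimate at $(p,\sigma) = (2, 0)$ covers the subcritical range $2 < p < q_{\mathrm{LS},\beta}$ along the line $\sigma = \bar{s}_p$; interpolating with the Sogge--Stein fixed-time estimate at $(p, \sigma) = (\infty, 0)$ covers the supercritical range $p > q_{\mathrm{LS},\beta}$ along the line $\sigma = \beta/p$. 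Both interpolations are most naturally phrased as complex interpolation of the analytic family $z \mapsto (1-\Delta)^{z/2} e^{\pm it\sqrt{-\Delta}} P_j$ between $L^p_x$ and $L^p_x(L^p_t(d\mu))$, preserving the uniformity over $\mu$ built into the critical endpoint.

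The main obstacle, unsurprisingly, is the critical endpoint itself. Outside of the case $d=2$, $\beta = 1$ (where it recovers the classical planar local smoothing theorem), Conjecture \ref{conj:roughLSMeasure} is open; any direct attack would require adapting $\ell^p$-decoupling for the light cone to a time variable governed by a rough Ahlfors--David measure, and carefully tracking uniformity over the class of such $\mu$ through the induction on scales. Secondary technical care is needed to ensure the Littlewood--Paley square function argument and the interpolation can be implemented with $\mu$-integrated target norms without accumulating dimensional constants depending on $\mu$ beyond its AD-regularity constant.
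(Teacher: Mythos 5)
The statement you have set out to prove is Conjecture \ref{conj:roughSmoothing}: the paper offers no proof of it. It is introduced only as ``a slight reformulation of Conjecture \ref{conj:roughLSMeasure}'', and the sole thing established about it in the appendix is the implication of Proposition \ref{prop:roughLSImpliesVarLS} (that it yields the discretized estimate of Conjecture \ref{conj:roughLS} for Ahlfors--David regular sets). So there is no proof in the paper to compare yours against, and your proposal is not a proof either: its essential ingredient, the critical frequency-localized endpoint at $p=q_{\mathrm{LS},\beta}$, is exactly Conjecture \ref{conj:roughLSMeasure}, which you correctly acknowledge is open outside the case $d=2$, $\beta=1$. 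What you have written is a reduction of the full conjecture to its critical case, conditional on an unproved and genuinely hard statement; the gap is not a technical oversight but the entire mathematical content of the conjecture.

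That said, the reduction is numerically sound and worth recording. The identity $\bar{s}_{q_{\mathrm{LS},\beta}}=\beta/q_{\mathrm{LS},\beta}=\beta(d-1)/(2(d-1+\beta))$ is correct, and the two interpolation lines you describe --- from $(1/p,\sigma)=(1/2,0)$ to the critical point along $\sigma=\bar{s}_p$, and from the critical point toward $1/p=0$ along $\sigma=\beta/p$ --- reproduce exactly the ranges stated in the conjecture, which is presumably how the formulation was arrived at, in analogy with Sogge's local smoothing conjecture. Two technical remarks on your scheme. First, since the inequalities on $\sigma$ are strict, the triangle inequality over dyadic frequency blocks together with the geometric factor $2^{-j\eta}$ suffices; the square-function characterization of $\mathcal{L}^p_s$ and the attendant Minkowski step are unnecessary. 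Second, the ``$p=\infty$ endpoint'' is better realized, for each finite $p$, as the trivial bound
\[\left(\int_1^2\left\|e^{\pm it\sqrt{-\Delta}}P_jf\right\|_{L^p}^p d\mu(t)\right)^{\frac{1}{p}}\leq \sup_{t\in[1,2]}\left\|e^{\pm it\sqrt{-\Delta}}P_jf\right\|_{L^p}\lesssim 2^{j\bar{s}_p}\|f\|_{L^p},\]
valid because $\mu$ is a probability measure; this avoids interpolating with $L^\infty$ and keeps the uniformity over the class of measures transparent. None of this changes the verdict: the critical exponent carries all the difficulty, and that is precisely what remains unproved.
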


A bounded $\mathcal{H}^s$ measurable set $E$ with $\mathcal{H}^s(E)\in (0,\infty)$ is Ahlfors--David $s$-regular if for all $x\in E$ and $r<\diam E$,
\[\frac{1}{C}r^s\leq \mathcal{H}^s|_E(B(x,r))\leq C r^s,\]
for some $C>0$.

If $E$ is Ahlfors--David regular, we can usefully implement this result with respect to our present problem. We consider the discrete probability measure
$\mu_{j} = \frac{1}{|\mathcal{Z}_j(E)|}\sum_{\nu\in\mathcal{Z}_j(E)}\delta_{t_\nu}$. Taking a weak limit (along a subsequence) we obtain some $\mu$.
By our regularity assumptions, for $r\geq 2^{-j}$, and $t\in \supp \mu$, 
$\mu_{j}(B(t,r)) \sim \mu(B(t,r))$. For less regular measures, in particular for measures with distinct Minkowski and Assouad dimensions, the relationship between the $\mu_{j}$ and $\mu$ may be less well-behaved. This regularity, together with uncertainty principle heuristics, will allow us to relate variational embedding estimates to the local smoothing estimates.

Informally, for $\supp \hat{f}\subset B(0,2^j)$, we might expect $\left(e^{ i t \sqrt {-\Delta}}f\right)(x)$ to be roughly constant over time-intervals of width $2^{-j}$. To formalise this heuristic, we take a further Littlewood--Paley decomposition in the frequency variables dual to time and use a variant of uncertainty principle arguments. Reformulating Lemma 2.5 of \cite{beltranOberlinRoncalSeegerStovall22} in the context of our embedding tells us the operator $\mathcal{A}_j$ is concentrated on time-frequency scales $\approx 2^j$. We denote by $\Lambda_l$ time-frequency projection onto scales $\sim 2^l$. Recall for 
\[\mathcal{A}_{j}f(x,t)=2^{-\frac{j(d-1)}{2}}(2\pi)^{-d-1}\sum_{\pm}\int \kappa_{j}^{\pm}(y,t) f(x-y)dy,\]
we have the kernel bound
\[|\kappa_{j}^{\pm}(x,t)|\lesssim_{N} 2^{jd}(1+2^j|x|)^{-\frac{d-1}{2}}(1+2^j||x|-|t||)^{-N}.\]
We write
\[\Lambda_l\mathcal{A}_{j}^{\pm}f(x,t)=2^{-\frac{j(d-1)}{2}}(2\pi)^{-d-1}\sum_{\pm}\int \kappa_{j,l}^{\pm}(y,t) f(x-y)dy,\]
with
\[\kappa_{j,l}^{\pm}(y,t) =\int_\RR \int_{\RR^d}e^{i\left( y\cdot \xi + t \tau\right)}\phi_l(\tau)\phi_j(|\xi)\int_{\RR} \chi(s)a_{\pm}(s,\xi)e^{is\left(\pm|\xi|-\tau\right)}dsd
\xi d\tau.\]
\begin{lemma}\label{lem:timeFrequencyErrors} \begin{enumerate}[(i)]
    \item For all $N\in \NN_0$ and $|j-l|\geq 10$,
    \[|\kappa_{j,l}^{\pm}(y,t)|\lesssim_N ( 1 + |y| + |t| )^{-N}\min\lbrace 2^{-jN},2^{-lN}\rbrace.\]
    \item Suppose that $1\leq r \leq \infty$ and $1\leq p \leq q \leq \infty$. Then there exists $C_N(p,q,r)$
for which
\[\left\|\Lambda_l\mathcal{A}_j f(x,t_\nu+h)\right\|_{L^q_x(\ell^r_\nu(\mathcal{Z}_j))}\lesssim C_N(p,q,r)\min \lbrace 2^{-jN},2^{-lN}\rbrace \left\|f\right\|_{L^p},\]
for $|j-l|\geq 10$ and $0\leq h \leq 2^{-j}$.
\end{enumerate}
\end{lemma}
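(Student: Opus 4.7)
The plan is to prove (i) first by iterated integration by parts, and then deduce (ii) from (i) via a straightforward Young's inequality argument. The key mechanism is that when $|j-l|\geq 10$, the phase $s(\pm|\xi|-\tau)$ in the inner $s$-integral is non-stationary on the support of $\phi_j(|\xi|)\phi_l(\tau)$, and this gives decay in $\max(2^j,2^l)$ that I can trade against derivative losses from the later integrations by parts.

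First I would rewrite the inner $s$-integral by setting $\psi_\xi(s)=\chi(s)a_\pm(s,\xi)$ so that
\[K(\xi,\tau)\coloneqq \int_\RR\chi(s)a_\pm(s,\xi)e^{is(\pm|\xi|-\tau)}\,ds = \hat{\psi}_\xi(\tau\mp|\xi|).\]
Since $\psi_\xi$ is smooth and compactly supported in $s$ with $a_\pm\in S^{-(d-1)/2}$, the function $\hat{\psi}_\xi$ is Schwartz in its argument, with uniform estimate $|\partial_\xi^\alpha\partial_\tau^\beta K(\xi,\tau)|\lesssim_{N,\alpha,\beta}|\xi|^{-(d-1)/2}(1+|\tau\mp|\xi||)^{-N}$ for every $N$. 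On $\supp\phi_j(|\xi|)\phi_l(\tau)$ with $|j-l|\geq 10$ we have $|\tau\mp|\xi||\sim\max(2^j,2^l)$, so $K$ and all its derivatives decay like $\max(2^j,2^l)^{-N}$ there, uniformly.

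Next, to produce the $(1+|y|+|t|)^{-N}$ decay I would integrate by parts in $\xi$ and $\tau$ against the phase $e^{i(y\cdot\xi+t\tau)}$. Each derivative falling on $\phi_j(|\xi|)$ or $\phi_l(\tau)$ costs at most $2^{-\min(j,l)}$, and derivatives on $K$ are harmless by the estimate above. The ambient volume factor $2^{jd}\cdot 2^l$ from integration in $(\xi,\tau)$, together with these derivative losses, is dominated by the $\max(2^j,2^l)^{-N}$ gain once $N$ is taken sufficiently large in terms of $d$ and the desired decay exponent. This yields (i). The main technical wrinkle is the bookkeeping between the $\phi_j,\phi_l$ derivative losses and the non-stationary gain in $s$, but the gap $|j-l|\geq 10$ leaves plenty of room.

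Finally, (ii) follows quickly from (i). Since $t_\nu+h\in[1,2]$, the kernel bound specialises uniformly in $\nu$ to
\[|\Lambda_l\mathcal{A}_jf(x,t_\nu+h)|\lesssim_N 2^{-\frac{j(d-1)}{2}}\min\{2^{-jN},2^{-lN}\}\int(1+|y|)^{-M}|f(x-y)|\,dy.\]
Taking the $\ell^r_\nu(\mathcal{Z}_j)$ norm costs at most $|\mathcal{Z}_j|^{1/r}\leq 2^{j/r}$; Young's convolution inequality applied with $(1+|\cdot|)^{-M}\in L^s$ and $1+1/q=1/p+1/s$ then yields the $L^p_x\to L^q_x$ bound. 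The factors $|\mathcal{Z}_j|^{1/r}$ and $2^{-j(d-1)/2}$ are absorbed into $\min\{2^{-jN},2^{-lN}\}$ by enlarging $N$. The hard part is really part (i); (ii) is essentially bookkeeping given the kernel estimate.
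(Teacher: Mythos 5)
Your argument is correct, and it is the standard one: the paper does not actually supply a proof of this lemma (it is stated as a consequence of the operator being concentrated at time-frequency scale $2^j$, citing a reformulation of Lemma 2.5 of \cite{beltranOberlinRoncalSeegerStovall22}), so your non-stationary-phase computation is exactly the argument being taken for granted. The key points are all in place: the inner $s$-integral is a Fourier transform of a compactly supported smooth function evaluated at $\tau\mp|\xi|$, which on $\supp\phi_j(|\xi|)\phi_l(\tau)$ with $|j-l|\geq 10$ has size $\sim\max(2^j,2^l)$, giving arbitrary negative powers of $\max(2^j,2^l)$ that absorb the volume factor $2^{jd+l}$ and the losses from the subsequent integration by parts in $(\xi,\tau)$; and part (ii) then follows from the uniform-in-$\nu$ kernel bound, the crude count $|\mathcal{Z}_j(E)|\leq 2^j$, and Young's inequality (which is where the hypothesis $p\leq q$ enters), with all polynomial-in-$2^j$ factors absorbed by enlarging $N$ since $\min\{2^{-jN},2^{-lN}\}=2^{-N\max(j,l)}\leq 2^{-Nj}$.
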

We require the following technical lemma in order to preserve the time-localisation required to apply local smoothing estimates. 
\begin{lemma}\label{lem:localisedTimeFrequencyProjection}
Let $|j-l|<10$. We write
\[\Lambda_l\mathcal{A}_j^{\pm} f(x,t)=\widetilde{\Lambda}_l\mathcal{A}_j^{\pm} f(x,t)+2^{-\frac{j(d-1)}{2}}(2\pi)^{-d-1}\int E_{j,l}^{\pm}(x-y,t)f(y)dy,\]
where
\[\widetilde{\Lambda}_l\mathcal{A}_j^{\pm} f(x,t) = 2^{-\frac{j(d-1)}{2}}(2\pi)^{-d-1}\int \tilde{\kappa}_{j,l}^{\pm}(x-y,t)f(y)dy,\]
with
\[\tilde{\kappa}_{j,l}^{\pm}(y,t)=\kappa_{j}^{\pm}(y,\cdot)*\tilde{\phi_l}(t),\]
where $\tilde{\phi_l}$ is the restriction of $\check{\phi}_l$ to $[-2^{-l(1-\epsilon)},2^{-l(1-\epsilon)}]$.
\begin{enumerate}[(i)]
    \item The kernel $E_{j,l}^{\pm}(y,t)\neq 0$ only for $t$ such that $d(t,\supp\chi)\geq 2^{-j(1-\epsilon)}$. Furthermore,
    \[\left| E_{j,l}(y,t) \right|\lesssim_{N,\epsilon} 2^{-jN}\left(1+|t|+|x|\right)^{-N}.\]
    \item Suppose that $1\leq r \leq \infty$ and $1\leq p \leq q \leq \infty$. Then there exists $C_{N,\epsilon}(p,q,r)$
    for which
    \[\left\|\left(\Lambda_l\mathcal{A}_j^{\pm} -\widetilde{\Lambda}_l\mathcal{A}_j^{\pm}\right)f(x,t_\nu+h)\right\|_{L^q_x(\ell^r_\nu(\mathcal{Z}_j))}\lesssim C_{N,\epsilon}(p,q,r) 2^{-jN} \left\|f\right\|_{L^p},\]
    for $0\leq h \leq 2^{-j}$. 
    \item We have that $\|\tilde{\phi_l}(t)\|\lesssim 1$.
\end{enumerate}
\end{lemma}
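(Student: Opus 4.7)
The overall plan is to decompose $E_{j,l}^{\pm}$ explicitly as the temporal convolution of $\kappa_j^{\pm}$ with the tail $\check\phi_l-\tilde\phi_l$, exploit the rapid Schwartz decay of $\check\phi_l$ on the complement of $[-2^{-l(1-\epsilon)},2^{-l(1-\epsilon)}]$ to extract a $2^{-jN}$ gain, and then pass to the operator bound in (ii) via Young's inequality. Part (iii) will be immediate from dilation invariance of the $L^1$ norm.

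For (i), I would start from $\kappa_{j,l}^{\pm}(y,t)=(\kappa_j^{\pm}(y,\cdot)*\check\phi_l)(t)$ to write
\begin{equation*}
E_{j,l}^{\pm}(y,t)=\int_{\supp\chi}\kappa_j^{\pm}(y,s)\bigl(\check\phi_l-\tilde\phi_l\bigr)(t-s)\,ds=\int_{s\in\supp\chi,\,|t-s|>2^{-l(1-\epsilon)}}\kappa_j^{\pm}(y,s)\,\check\phi_l(t-s)\,ds,
\end{equation*}
using that $\tilde\phi_l$ coincides with $\check\phi_l$ on the truncation interval and vanishes outside, and that $\kappa_j^{\pm}(y,s)$ carries a $\chi(s)$-factor. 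Combined with $|j-l|<10$, which renders $2^{-l(1-\epsilon)}$ and $2^{-j(1-\epsilon)}$ comparable, the domain constraint yields the support statement. For the pointwise bound I would combine the given kernel estimate on $\kappa_j^{\pm}$ with the Schwartz bound $|\check\phi_l(t-s)|\lesssim_M 2^l(1+2^l|t-s|)^{-M}$. On $|t-s|>2^{-l(1-\epsilon)}$ this last factor is $\lesssim 2^{l(1-\epsilon M)}$, so choosing $M$ large relative to $N$ and $d/\epsilon$ dominates the $2^{jd}$ frequency normalisation and produces the $2^{-jN}$ gain. Spatial decay in $|y|$ then comes from $(1+2^j|y|)^{-(d-1)/2}(1+2^j\bigl||y|-|s|\bigr|)^{-M}$, and temporal decay in $|t|$ from $(1+2^l|t-s|)^{-M}$ together with boundedness of $s\in\supp\chi$ (so $|t-s|\sim|t|$ for large $|t|$).

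For (ii), write the difference operator as spatial convolution with $E_{j,l}^{\pm}(\cdot,t_\nu+h)$, scaled by $2^{-j(d-1)/2}(2\pi)^{-d-1}$; by (i), $\|E_{j,l}^{\pm}(\cdot,t_\nu+h)\|_{L^s(\RR^d)}\lesssim_{N,\epsilon}2^{-jN}$ uniformly in $\nu$ and $h\in[0,2^{-j}]$ for every $s\in[1,\infty]$. Young's inequality with $1/s=1-1/p+1/q$ then gives $\|(\Lambda_l\mathcal{A}_j^{\pm}-\widetilde{\Lambda}_l\mathcal{A}_j^{\pm})f(\cdot,t_\nu+h)\|_{L^q_x}\lesssim 2^{-jN}\|f\|_{L^p}$ pointwise in $\nu$, and the $\ell^r_\nu$-sum costs at most $|\mathcal{Z}_j(E)|^{1/r}\lesssim 2^{j/r}$, absorbed by enlarging $N$. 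For (iii), dilation gives $\|\check\phi_l\|_{L^1}=\|\check\phi_1\|_{L^1}\lesssim 1$, and restriction to a subinterval preserves this bound.

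The main obstacle is making the literal support statement in (i) stick: with a genuinely sharp truncation in the definition of $\tilde\phi_l$, the error kernel $E_{j,l}^{\pm}$ remains technically nonzero in a thin neighbourhood of $\supp\chi$ through tails of $\check\phi_l$ leaking across the cutoff. I would resolve this either by interpreting (i) modulo the rapidly decaying contribution quantified in the second part of (i) (which can be absorbed into $\widetilde{\Lambda}_l$ at $2^{-jN}$ cost), or by replacing the sharp truncation by a smooth cutoff at the same scale $2^{-l(1-\epsilon)}$, which preserves (ii) and (iii) up to constants and enforces (i) strictly. Aside from this formal point, the remaining calculations are routine applications of stationary phase bounds and convolution inequalities.
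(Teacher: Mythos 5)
Your proposal is correct and follows essentially the same route as the paper, whose proof is a one-line remark that the lemma follows from the kernel bound on $\kappa_j^{\pm}$ and the representation $\kappa_{j,l}^{\pm}(y,\cdot)=\kappa_j^{\pm}(y,\cdot)*\check{\phi}_l$; you have simply filled in the Schwartz-tail estimate and the Young's inequality step that the paper leaves implicit. Your caveat about the literal support claim in (i) is well taken --- with a sharp truncation the error kernel is indeed nonzero even for $t$ near $\supp\chi$ --- but this is an imprecision in the paper's statement rather than a gap in your argument, since only the quantitative bound $|E_{j,l}|\lesssim_{N,\epsilon}2^{-jN}(1+|t|+|y|)^{-N}$ is used downstream.
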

\begin{proof}
This is a simple consequence of the kernel bound
\[|\kappa_{j}^{\pm}(y,t)|\lesssim_N 2^{jd} (1+ 2^j|x|)^{-(d-1)/2}\left(1+2^j||x|-|t||\right)^{-N}\]
and the representation $\kappa_{j,l}^{\pm}(y,t)=\kappa_{j}^{\pm}(y,\cdot)*\check{\phi_l}(t)$. 
\end{proof}

\begin{proposition}\label{prop:roughLSImpliesVarLS}
Suppose that $E\subset [1,2]$ is an Ahlfors--David regular set of dimension $\beta$ and that Conjecture \ref{conj:roughSmoothing} holds. Let $\nu\in\mathcal{Z}_j(E)$ index a minimal set of $2^{-j}$-separated $t_\nu$ such that the intervals $[t_\nu,t_\nu+2^{-j}]$ cover $E$. Then, for $0\leq h \leq 2^{-j}$, 
\[\left\|\mathcal{A}_{j}f(x,t_\nu+h)\right\|_{L^{q_{\mathrm{LS},\beta}}_x(\RR^d;\ell^{q_{\mathrm{LS},\beta}}_\nu(\mathcal{Z}_j(E)))}\lessapprox 2^{-\frac{j(d-1)}{2}}|\mathcal{Z}_j(E)|^{1/ q_{\mathrm{LS},\beta}}\|f\|_{L^{q_{\mathrm{LS},\beta}}}.\]
\end{proposition}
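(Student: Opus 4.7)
The aim is to bootstrap the continuous local smoothing of Conjecture \ref{conj:roughSmoothing} against the AD-regular measure $\mu$ associated to $E$ into the discrete $\ell^p_\nu$ bound for $\mathcal{A}_j$ stated here. The plan has three stages: express $\mathcal{A}_j^\pm$ as a $\tau$-superposition of half-wave operators $e^{\pm it\sqrt{-\Delta}}$ composed with $L^p$-bounded multipliers; apply Conjecture \ref{conj:roughSmoothing} to obtain a bound in $L^p_x L^p(d\mu;t)$; and transfer this continuous bound to the discrete sum over $\nu$ at the shifted points $t_\nu+h$ using the time-frequency localisation of $\mathcal{A}_j$ from Lemmas~\ref{lem:timeFrequencyErrors} and~\ref{lem:localisedTimeFrequencyProjection}.

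For the first two stages, I start from the stationary-phase representation $\mathcal{A}_j=\mathcal{A}_j^++\mathcal{A}_j^-$ with $a_\pm\in S^{-(d-1)/2}$ and handle the $t$-dependence of the symbol by taking its partial Fourier transform in $t$, writing
\[
\phi_j(|\xi|)a_\pm(t,\xi) = 2^{-j(d-1)/2}\int e^{it\tau}b_\pm(\tau,\xi)\,d\tau,
\]
where $b_\pm(\tau,\xi)=\mathcal{F}_{t\to\tau}[\chi(t)\cdot 2^{j(d-1)/2}\phi_j(|\xi|)a_\pm(t,\xi)]$ is a zero-order Mihlin-type multiplier supported in $|\xi|\sim 2^j$ with Schwartz decay in $\tau$ (by integration by parts exploiting the smoothness of $a_\pm$ in $t$). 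Letting $T_\tau$ denote the multiplier with symbol $b_\pm(\tau,\xi)$, so that $\|T_\tau f\|_{L^p}\lesssim_N(1+|\tau|)^{-N}\|f\|_{L^p}$ for $1<p<\infty$, this produces
\[
\mathcal{A}_j^\pm f(x,t) = 2^{-j(d-1)/2}\int e^{it\tau}\bigl(e^{\pm it\sqrt{-\Delta}}T_\tau f\bigr)(x)\,d\tau.
\]
At $p=q_{\mathrm{LS},\beta}$ a short calculation gives $\bar{s}_p=\beta/p$, so the conjecture applies with $\sigma$ arbitrarily close to this value. Since $T_\tau f$ is frequency-localised to $|\xi|\sim 2^j$, one has $\|g\|_{\mathcal{L}^p_{-\bar{s}_p+\sigma}}\sim 2^{j(\bar{s}_p-\sigma)}\|g\|_{L^p}$ for such $g$, so letting $\sigma\to\bar{s}_p$ the conjecture yields $\|e^{\pm it\sqrt{-\Delta}}T_\tau f\|_{L^p_x L^p(d\mu;t)}\lessapprox \|T_\tau f\|_{L^p}\lesssim(1+|\tau|)^{-N}\|f\|_{L^p}$. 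An application of Minkowski's inequality in $\tau$ then gives
\[
\|\mathcal{A}_j^\pm f(x,t)\|_{L^p_x L^p(d\mu;t)}\lessapprox 2^{-j(d-1)/2}\|f\|_{L^p}.
\]

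For the third stage, I need to convert this continuous norm into the discrete $\ell^p_\nu$ sum at $t_\nu+h$. Writing $\mu_j=|\mathcal{Z}_j(E)|^{-1}\sum_\nu\delta_{t_\nu}$, my goal reduces to showing $\int|\mathcal{A}_jf(x,\cdot+h)|^p\,d\mu_j\lessapprox\int|\mathcal{A}_jf(x,t)|^p\,d\mu(t)$. By Lemmas~\ref{lem:timeFrequencyErrors} and~\ref{lem:localisedTimeFrequencyProjection}, $\mathcal{A}_jf$ is essentially a convolution in $t$ with the bump $\tilde{\phi}_l$ of width $\sim 2^{-j}$ (with negligible tail errors), so $|\mathcal{A}_jf(x,\cdot)|^p$ is approximately bandlimited at scale $\sim 2^j$ and hence essentially constant on intervals of length $2^{-j}$. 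Since $\mu(B(t_\nu,2^{-j}))\sim 2^{-j\beta}\sim |\mathcal{Z}_j(E)|^{-1}=\mu_j(\{t_\nu\})$, AD-regularity guarantees that the continuous $\mu$ and the sampling measure $\mu_j$ produce comparable integrals of such slowly-varying functions, up to losses absorbable into $\lessapprox$. Combined with the bounded overlap of $\{B(t_\nu,2^{-j+1})\}_\nu$, this yields
\[
\sum_\nu|\mathcal{A}_jf(x,t_\nu+h)|^p\lessapprox |\mathcal{Z}_j(E)|\int|\mathcal{A}_jf(x,t)|^p\,d\mu(t),
\]
and combining with stage two produces the claim.

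The main technical hurdle is this last sampling inequality: controlling discrete point values $F(t_\nu+h)$ at $2^{-j}$-separated locations by a local $L^p(d\mu)$ integral when $F$ is frequency-localised at scale $2^j$. The natural route is a Bernstein/Plancherel--P\'olya comparison, using the localised kernel $\tilde{\phi}_l$ from Lemma~\ref{lem:localisedTimeFrequencyProjection} to realise $F(t_\nu+h)$ as a local average and then exploiting AD-regularity to pass from averages against Lebesgue measure to averages against $\mu$. Subtleties arise because $|F|^p$ is not itself frequency-localised and because $\mu$ is singular with respect to Lebesgue; however, the $\lessapprox$ notation absorbs the $2^{j\epsilon}$ losses created by replacing $\tilde{\phi}_l$ with a slightly dilated variant whose support is a $2^{-j(1-\epsilon)}$-neighbourhood of $t_\nu$, and the bounded-overlap structure of the covering controls the aggregate sum.
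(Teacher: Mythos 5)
Your first two stages --- writing $\mathcal{A}_j^\pm$ as a $\tau$-superposition of $e^{\pm it\sqrt{-\Delta}}$ composed with rapidly decaying zero-order multipliers, checking $\bar s_{p}=\beta/p$ at $p=q_{\mathrm{LS},\beta}$, and converting the Bessel-potential norm into a factor $2^{j(\bar s_p-\sigma)}$ on frequency $2^j$ --- are sound, and in fact make explicit a reduction that the paper's proof leaves implicit. The gap is in stage three, which you yourself flag as the main hurdle: the sampling inequality $\sum_\nu|\mathcal{A}_jf(x,t_\nu+h)|^p\lessapprox|\mathcal{Z}_j(E)|\int|\mathcal{A}_jf(x,t)|^p\,d\mu(t)$ is asserted on the strength of ``$|\mathcal{A}_jf(x,\cdot)|^p$ is approximately bandlimited, hence essentially constant on $2^{-j}$-intervals,'' but this is exactly the statement requiring proof, and the approximate constancy cannot be used as if it were exact: $\supp\mu$ need not contain $t_\nu+h$, and bounding $|F(t_\nu+h)|$ by values of $F$ on $\supp\mu\cap I_\nu$ via ``slow variation'' is circular, since the error term is controlled only by the same kind of local average one is trying to establish.

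The paper closes this gap with a specific maneuver whose output differs from your target in a way that matters. After discarding the off-diagonal pieces $|j-l|\geq 10$ and the delocalised errors via Lemmas \ref{lem:timeFrequencyErrors} and \ref{lem:localisedTimeFrequencyProjection}, one has $\widetilde{\Lambda}_l\mathcal{A}_j^{\pm} f(x,t)=\mathcal{A}_j^{\pm} f(x,\cdot)*\tilde{\phi_l}(t)$ with $\tilde{\phi_l}$ compactly supported at scale $2^{-l(1-\epsilon)}$; setting $\eta_l(s)=\sup_{|h|\leq 2^{-j}}|\tilde{\phi}_l(s+h)|$ gives the \emph{exact} inequality $\sup_{t\in I_\nu}|\widetilde{\Lambda}_l\mathcal{A}_j^{\pm} f(x,t)|\leq\inf_{t\in I_\nu}\bigl(\eta_l*|\mathcal{A}_j^{\pm} f(x,\cdot)|\bigr)(t)$. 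Summing against $\mu(I_\nu)\sim\mu_j(I_\nu)$ and applying Jensen's inequality (using $\|\eta_l\|_1\lesssim 1$), one arrives at $\int|\mathcal{A}_j^{\pm} f(x,t)|^{q_{\mathrm{LS},\beta}}\,d\mu^l(t)$ with $\mu^l=\mu*\eta_l$ --- not $d\mu$. The conjecture is then applied to $\mu^l$, using that $\mu^l(B(t,r))\lesssim r^\beta$ and that the constant is uniform over the class of measures. So the order of operations matters: the local smoothing input must be invoked \emph{after} the discretisation, for the mollified (merely upper-regular) measure, rather than for $\mu$ first followed by a comparison $\int|F|^p d\mu^l\lessapprox\int|F|^p d\mu$, which is not established and would require another pass through the same localisation difficulty. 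Your architecture is right; the missing content is precisely this sup-to-inf mollification step and the resulting need to apply Conjecture \ref{conj:roughSmoothing} to $\mu*\eta_l$.
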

\begin{proof} Using Lemmas \ref{lem:timeFrequencyErrors} and \ref{lem:localisedTimeFrequencyProjection}, we have that
\[|\mathcal{Z}_j(E)|^{-1/ q_{\mathrm{LS},\beta}}\left\|\widetilde{\Lambda}_l\mathcal{A}_j f(x,t_\nu)\right\|_{L^{q_{\mathrm{LS},\beta}}_x(\ell^{q_{\mathrm{LS},\beta}}_\nu(\mathcal{Z}_j))}\]
\[\lesssim 
\sum_{|j-l|\leq 10 }|\mathcal{Z}_j(E)|^{-1/ q_{\mathrm{LS},\beta}}\left\|\widetilde{\Lambda}_l\mathcal{A}_j f(x,t_\nu)\right\|_{L^{q_{\mathrm{LS},\beta}}_x(\ell^{q_{\mathrm{LS},\beta}}_\nu(\mathcal{Z}_j))}+\sum_{j,l} \min\lbrace 2^{-jN},2^{-lN}\rbrace\|f\|_{L^{q_{\mathrm{LS},\beta}}}.\]
In the following, we analyse terms from the first sum with $|j-l|\leq 10$. We write $I_\nu=[t_\nu,t_\nu+2^{-j}]$. We have
\[|\mathcal{Z}_j(E)|^{-1/ q_{\mathrm{LS},\beta}}\left\|\widetilde{\Lambda}_l\mathcal{A}_j^{\pm} f(x,t_\nu+h)\right\|_{L^{q_{\mathrm{LS},\beta}}_x(\ell^{q_{\mathrm{LS},\beta}}_\nu(\mathcal{Z}_j))}\]
\[=\left(\int\int \left|\widetilde{\Lambda}_l\mathcal{A}^{\pm}_jf(x,t+h)\right|^{q_{\mathrm{LS},\beta}} dx d\mu_j(t) \right)^{\frac{1}{q_{\mathrm{LS},\beta}}}\]
\[\leq\left(\int\sum_{\nu\in\mathcal{Z}_j(E)} \sup_{t\in I_\nu}\left|\widetilde{\Lambda}_l\mathcal{A}^{\pm}_jf(x,t)\right|^{q_{\mathrm{LS},\beta}}  \mu_j(I_\nu) dx \right)^{\frac{1}{q_{\mathrm{LS},\beta}}}\]
\[\sim\left(\int\sum_{\nu\in\mathcal{Z}_j(E)} \sup_{t\in I_\nu}\left|\widetilde{\Lambda}_l\mathcal{A}^{\pm}_jf(x,t)\right|^{q_{\mathrm{LS},\beta}}  \mu(I_\nu) dx \right)^{\frac{1}{q_{\mathrm{LS},\beta}}}.\]
We define $\eta_l(s)=\sup_{|h|\leq 2^{-j}}\left|\tilde{\phi}_l(s+h)\right|$ to bound the previous expression by
\[\left(\int\sum_{\nu\in\mathcal{Z}_j(E)} \inf_{t\in I_\nu}\left(\eta_l*|\mathcal{A}^{\pm}_jf(x,\cdot)|(t)\right)^{q_{\mathrm{LS},\beta}}  \mu(I_\nu) dx \right)^{\frac{1}{q_{\mathrm{LS},\beta}}}\]
\[\lesssim\left(\int\int \left(\eta_l*|\mathcal{A}^{\pm}_jf(x,\cdot)|(t)\right)^{q_{\mathrm{LS},\beta}}  d\mu(t) dx \right)^{\frac{1}{q_{\mathrm{LS},\beta}}}\]
\[\lesssim\left(\int\int \left(\eta_l*|\mathcal{A}^{\pm}_jf(x,\cdot)|^{q_{\mathrm{LS},\beta}}(t)\right)  d\mu(t) dx \right)^{\frac{1}{q_{\mathrm{LS},\beta}}}\]
\[\lesssim\left(\int\int \left|\mathcal{A}^{\pm}_jf(x,t)\right|^{q_{\mathrm{LS},\beta}}  d\mu^l(t) dx \right)^{\frac{1}{q_{\mathrm{LS},\beta}}},\] 
where $\mu^l=\mu*\eta_l$. The measure $\mu^l$ is such that $\mu^l(B(t,r))\lesssim r^\beta$ for all balls so we can apply the local smoothing inequality to obtain the desired result.
\end{proof}

\section{The Besov embedding}\label{app:besov}
\begin{theorem}
Suppose that $F\in \mathcal{S}(\RR)$. 
Then we have that 
\[V^r_{[1,2]}F\lesssim \sum_{j\geq 0}\left(2^{\frac{j}{r}}\left\|\check{\phi_j}* F\right\|_{L^r}\right),\]
where, for $j\geq 1$, $\phi_j(\tau)=\phi(2^{-j}\tau)$ for some $\phi$ such that $\supp\phi\subset [-4,-1/2]\cup [1/2,4]$ with $\phi(\tau)=1$ for $|\tau|\in[1,2]$ and $\phi_0(\tau)=1-\sum_{j\geq 1}\phi_j(\tau)$.
\end{theorem}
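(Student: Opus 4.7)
The plan is to follow the same Littlewood--Paley strategy that underlies the operator-valued embedding in Section \ref{sec:embedding}, but now for a single function of one real variable. First I would decompose $F = \sum_{j\geq 0} F_j$ with $F_j = \check{\phi}_j * F$ and apply the triangle inequality for the $V^r$ semi-norm to reduce the claim to the Bernstein-type bound
\[
V^r_{[1,2]} F_j \lesssim 2^{j/r} \|F_j\|_{L^r(\RR)}, \qquad j \geq 0.
\]
The $j=0$ case is trivial since $F_0$ has bounded Fourier support and hence $V^r_{[1,2]} F_0 \lesssim \|F_0'\|_{L^\infty} \lesssim \|F_0\|_{L^r}$. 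So the substance lies in $j \geq 1$.

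For $j \geq 1$, I would mimic the partition-of-partition argument from the embedding proof in Section \ref{sec:embedding}. Given any admissible chain $1 \leq t_1 < t_2 < \cdots < t_N \leq 2$ and an ordered cover $\mathcal{I} = \{I_1, \ldots, I_J\}$ of $[1,2]$ by $\delta$-intervals $I_k = [a_k, a_k + \delta]$ with $\delta = 2^{-j}$ and $\delta$-separated left endpoints, group the indices by the block they lie in via $N(k) = \sum_{l \leq k} n_l$, so that $t_i \in I_k$ for $N(k{-}1) < i \leq N(k)$. Applying the triangle inequality for the $\ell^r$ norm exactly as in the estimate leading to \eqref{eq:opEmbFTCterms}--\eqref{eq:opEmbPPterms} splits the variational sum into (i) within-block differences of the form $|F_j(t_i)-F_j(t_{i-1})|$ with both endpoints in a common $I_k$, and (ii) block-endpoint terms involving values $F_j(a_k)$.

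For the within-block terms, apply the fundamental theorem of calculus and Hölder exactly as in Section \ref{sec:embedding} to obtain
\[
\Bigl(\sum_{k=1}^J \sum_{i=N(k-1)+1}^{N(k)} |F_j(t_i) - F_j(t_{i-1})|^r \Bigr)^{1/r} \lesssim \delta^{1-\frac{1}{r}} \Bigl(\int_{[1,2]} |F_j'(t)|^r\,dt \Bigr)^{1/r},
\]
and then invoke the Bernstein inequality $\|F_j'\|_{L^r} \lesssim 2^j \|F_j\|_{L^r}$ (valid since $\supp \hat{F_j} \subset \{|\tau| \sim 2^j\}$) to bound this by $\delta^{1-1/r} \cdot 2^j \|F_j\|_{L^r} = 2^{j/r} \|F_j\|_{L^r}$.

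For the block-endpoint terms, I need a sampling estimate of the form
\[
\Bigl(\sum_k |F_j(a_k)|^r\Bigr)^{1/r} \lesssim 2^{j/r} \|F_j\|_{L^r(\RR)},
\]
for any $2^{-j}$-separated family $\{a_k\}$. This follows from the fact that $F_j$ is band-limited at scale $2^j$: one can write $F_j = F_j * \check{\tilde{\phi}}_j$ for a suitable fattened multiplier, and apply Young's inequality to transfer an $L^\infty$ bound of the kernel on intervals of size $\delta$ into the sampling inequality. Equivalently, the pointwise bound $|F_j(a_k)|^r \lesssim 2^j \int_{I_k^*} |F_j(t)|^r\,dt$ on a slightly enlarged interval $I_k^*$, followed by summation in $k$ using finite overlap, delivers the estimate. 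The main obstacle, and the only place care is needed, is this last sampling step: everything else is a direct replay of the operator-valued Besov embedding already carried out in Section \ref{sec:embedding}. Combining the two contributions gives $V^r_{[1,2]} F_j \lesssim 2^{j/r} \|F_j\|_{L^r}$, and summing in $j$ completes the proof.
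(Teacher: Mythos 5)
Your proposal is correct and follows essentially the same route as the paper's proof in Appendix \ref{app:besov}: reduce to a single Littlewood--Paley piece, split the variation sum by grouping the sample points into $2^{-j}$-blocks, bound the within-block differences by the fundamental theorem of calculus, H\"older, and Bernstein, and control the point-evaluation terms by a Plancherel--Polya sampling inequality for $2^{-j}$-separated points. The only cosmetic difference is that you decompose $F$ at the outset and you sketch a proof of the sampling step, whereas the paper proves the single-scale bound for band-limited $F$ first, cites Plancherel--Polya, and sums over the decomposition at the end.
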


\begin{proof}
We proceed as in \cite{peetre76} and \cite{guoRoosYung20}. For large $l\in\mathbb{N}$, suppose that $\supp\hat{F}\subset B(0,2^l)$. Let $\mathcal{I}_l=\lbrace I_1,I_2,\ldots I_J\rbrace$ be an ordered cover of $E$ by $\delta$-intervals, with $\delta=2^{-l}$. We can assume the left endpoints of the intervals are $\delta$-separated. Suppose that we have a sequence $t_1<t_2<\ldots<t_N$ of points in $E$. Starting from $n_0=0$, we find $n_1,n_2,\ldots,n_J$ and $N(k)=\sum_{j=1}^{k}n_j$ such that for $N(k-1)\leq i < N(k)$, $t_i\in I_k=[a_k,a_k+\delta]$. By multiple applications of the triangle inequality, we find that 
    \[
    \left(\sum_{i=2}^N|F(t_i)-F(t_{i-1})|^r\right)^\frac{1}{r}
    \]
    \[
    =\left(\sum_{k=1}^{J}\sum_{i=N(k-1)+1}^{N(k)}|F(t_i)-F(t_{i-1})|^r+\sum_{k=2}^{J}|F(t_{N(k)})-F(t_{N(k)-1})|^r\right)^\frac{1}{r}
    \]
    \begin{equation}\label{eq:BesovEmbFTCterms}
    \lesssim\left(\sum_{k=1}^{J}\sum_{i=N(k-1)}^{N(k)}|F(t_i)-F(t_{i-1})|^r\right)^\frac{1}{r}
    \end{equation}
    \[
    +\left(\sum_{k=2}^{J-1}|F(t_{N(k)})-F(a_{k+1})|^r\right)^\frac{1}{r}
    \]
    \[
    +\left(\sum_{k=2}^{J-1}|F(a_{k})-F(t_{N(k)-1})|^r\right)^\frac{1}{r}
    \]
    \begin{equation}\label{eq:BesovEmbPPterms}
   +\left(\sum_{k=2}^{J-1}|F(a_{k+1})|^r+|F(a_{k})|^r\right)^\frac{1}{r}.
    \end{equation}
    The sum in \eqref{eq:BesovEmbFTCterms} is bounded by an application of the Fundamental Theorem of Calculus and Bernstein's inequality, as for the remaining sums, excepting \eqref{eq:BesovEmbPPterms}, which is bounded by the Plancherel--Polya inequality. Applying H\"{o}lder's inequality followed by Bernstein's inequality, we find that 
    \[\left(\sum_{k=1}^{J}\sum_{i=N(k-1)}^{N(k)}|F(t_i)-F(t_{i-1})|^r\right)^\frac{1}{r}\]
    \[\leq\left(\sum_{k=1}^{J}\sum_{i=N(k-1)}^{N(k)}(t_i-t_{i-1})^{r-1}\left(\int_{t_{i-1}}^{t_i}|F'(t)|^r dt \right)\right)^\frac{1}{r}\]
    \[\leq \delta^{1-\frac{1}{r}}\|F'\|_{L^r}\]
    \[\lesssim_{r} \delta^{-\frac{1}{r}}\|F\|_{L^r}.\]
    The complete result now follows by an application of the triangle inequality to the Littlewood-Paley decomposition of $F=\sum_{j\geq 0}\check{\phi_j}*F$.
    \end{proof}

\end{appendix}

\bibliographystyle{plain}
\bibliography{fractBib}
\end{document}